\numberwithin{equation}{section}
\DeclarePairedDelimiter\abs{\lvert}{\rvert}%
\DeclarePairedDelimiter\norm{\lVert}{\rVert}%
\DeclarePairedDelimiter\bnorm{\biggl\lVert}{\biggr\rVert}%
\DeclarePairedDelimiter\babs{\bigl\lvert}{\bigr\rvert}%
\let\oldabs\abs
\def\abs{\@ifstar{\oldabs}{\oldabs*}}
\let\oldnorm\norm
\def\norm{\@ifstar{\oldnorm}{\oldnorm*}}
\theoremstyle{plain}
\newtheorem{thm}{Theorem}[section]
\newtheorem{theorem}{Theorem}
\newtheorem*{assume}{Assumption}
\newtheorem{lem}[thm]{Lemma}
\newtheorem{prop}[thm]{Proposition}
\newtheorem{cor}[thm]{Corollary}
\theoremstyle{definition}
\theoremstyle{remark}
\newcommand\R{\mathbb{R}}
\newcommand\Z{\mathbb{Z}}
\newcommand\N{\mathbb{N}}
\newcommand\D{\mathcal{D}}
\newcommand\E{\mathcal{E}}
\newcommand\W{\mathcal{W}}
\newcommand\ps{\mathfrak{S}}
\newcommand\qnk{{\mathcal{Q}^n_{t} }}
\newcommand{\ben}{\begin{enumerate}[(i)]}
\newcommand{\een}{\end{enumerate}}
\newcommand{\ft}{\mathcal{F}}
\newcommand{\ift}{\mathcal{F}^{-1}}
\newcommand{\wh}{\widehat}
\newcommand{\les}{\lesssim}
\newcommand{\ges}{\gtrsim}
\newcommand{\supp}{\operatorname{supp}}
\newcommand{\meas}{\operatorname{meas}}
\newcommand{\inn}[1]{\langle #1 \rangle}
\newcommand{\sumab}[2]{\sum_{\substack{ {#1} \\ {#2} }}}
\begin{document}
\title[Spectral multipliers on compact manifolds]{Endpoint bounds for a class of spectral multipliers on compact manifolds}
\subjclass[2000]{58J05, 42B15}
\author{Jongchon Kim}
\address{Department of Mathematics, University of Wisconsin-Madison, Madison, WI 53706 USA}
\email{jkim@math.wisc.edu}
\begin{abstract}
It is well known that the Stein-Tomas $L^2$ Fourier restriction theorem can be used to derive sharp $L^p$ bounds for radial Fourier multipliers such as the Bochner-Riesz means. In a similar manner, $L^p \to L^2$ estimates for spectral projection operators have been utilized in order to obtain sharp $L^p$ bounds for spectral multipliers of self-adjoint elliptic pseudo-differential operators on compact manifolds. In this paper, we refine an endpoint result for spectral multipliers due to Seeger, providing endpoint bounds in terms of Besov spaces. Our proof is based on the ideas from the recent work by Heo, Nazarov and Seeger, and Lee, Rogers and Seeger on radial Fourier multipliers.
\end{abstract}
\maketitle
\section{Introduction}
Assume that $M$ is a compact smooth manifold of dimension $d\geq 2$ without boundary. Let $A$ be a first order classical elliptic pseudo-differential operator on $M$ which is positive and self-adjoint with respect to a smooth positive density $dx$. An important special case is $\sqrt{-\Delta}$ for the Laplace-Beltrami operator $\Delta$ on a Riemannian manifold. For the background information on pseudo-differential operators on manifolds and related topics, we refer the reader to \cite{Shubin, Taylor, Sogge}.

It can be shown by spectral theory that $L^2(M)$ admits a spectral decomposition and the spectrum of $A$ is discrete; $0\leq \lambda_1 \leq \lambda_2 \cdots$. Let $E_l$ be the orthogonal projection onto the eigenspace associated with the eigenvalue $\lambda_l$. For each bounded function $m$, one can define a bounded operator $m(A)$ on $L^2(M)$ by
\[m(A)f = \sum_l m(\lambda_l) E_lf. \]

Let $a(x,\xi)$ be the principal symbol of $A$. Throughout the paper, we make the following assumption.
\begin{assume} 
For each $x\in M$, $\Sigma_x = \{ \xi \in T^*_xM : a(x,\xi) = 1 \}$ has everywhere nonvanishing Gaussian curvature.
\end{assume}
As a consequence of the assumption, $L^p \to L^2$ estimates for spectral projection operators $\chi_n:=\chi_{[n,n+1]}(A)$ were obtained for a given $n\geq 0$. Let $\delta(p)=d(\frac{1}{p}-\frac{1}{2})-\frac{1}{2}$, which coincides with the critical index for Bochner-Riesz multipliers in the range $p\leq\frac{2d}{d+1}$.
\begin{theorem} \label{thm:spec}
Let $1 \leq p \leq \frac{2(d+1)}{d+3}$. Then
\begin{equation} \label{eqn:res}
\norm{\chi_n f}_{L^2(M)} \les (1+n)^{\delta(p)} \norm{f}_{L^p(M)}.
\end{equation}
\end{theorem}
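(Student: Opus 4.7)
The plan is to combine duality with a $TT^*$ argument to reduce \eqref{eqn:res} to an $L^p\to L^{p'}$ bound, and then analyze a smoothed version of $\chi_n$ via the half-wave propagator $e^{itA}$ using a Fourier integral operator parametrix. By duality, \eqref{eqn:res} is equivalent to $\|\chi_n\|_{L^2\to L^{p'}}\lesssim (1+n)^{\delta(p)}$, and since $\chi_n$ is self-adjoint and idempotent, a standard $TT^*$ argument reduces matters to proving
\begin{equation}\label{eqn:TTstar}
\|\chi_n f\|_{L^{p'}(M)}\lesssim (1+n)^{2\delta(p)}\|f\|_{L^p(M)}.
\end{equation}

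To work with a smoother object, I would choose a Schwartz function $\psi\geq 0$ with $\psi\geq 1$ on $[0,1]$ and $\widehat\psi$ supported in a small interval $[-\varepsilon,\varepsilon]$. Since $\psi(A-n)$ is diagonal in the eigenbasis and satisfies $\psi(\lambda_l-n)\geq 1$ whenever $\lambda_l\in[n,n+1]$, one has $\|\chi_n f\|_{L^2}\leq \|\psi(A-n)f\|_{L^2}$, so it suffices to establish the analogue of \eqref{eqn:TTstar} with $\chi_n$ replaced by $|\psi|^2(A-n)$, whose Fourier transform $\widehat\psi\ast\overline{\widehat\psi}$ is compactly supported in $[-2\varepsilon,2\varepsilon]$. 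By Fourier inversion,
\[
|\psi|^2(A-n)=\frac{1}{2\pi}\int_{-2\varepsilon}^{2\varepsilon}(\widehat\psi\ast\overline{\widehat\psi})(t)\,e^{-int}\,e^{itA}\,dt,
\]
so the task becomes bounding this weighted integral of the half-wave operator as a map $L^p\to L^{p'}$.

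For $|t|\leq\varepsilon$ small enough, H\"ormander's parametrix represents $e^{itA}$, modulo smoothing operators whose contribution is negligible, as a locally finite sum of Fourier integral operators with phase of the form $\varphi(x,y,\xi)+t\,a(x,\xi)$ and classical symbols of order zero. Performing a dyadic decomposition in $|\xi|$, the dominant contribution comes from the frequency scale $|\xi|\sim n$; integration in $t$ against the oscillatory factor $e^{-int}$ then effectively localizes the amplitude to the region $a(x,\xi)\sim n$, and rescaling $\xi\mapsto n\xi$ recasts the problem on the fixed cosphere $\Sigma_x=\{a(x,\xi)=1\}$. The standing hypothesis that $\Sigma_x$ has everywhere nonvanishing Gaussian curvature is precisely the Carleson-Sj\"olin rank condition for the rescaled phase, so Stein's oscillatory integral theorem delivers the bound in \eqref{eqn:TTstar} throughout the Stein-Tomas range $p\leq \frac{2(d+1)}{d+3}$. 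Contributions from lower frequency scales produce lower-order powers of $n$ and are handled routinely via Sobolev embedding.

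The main obstacle is the geometric construction and uniform control of the parametrix for $e^{itA}$ on the compact manifold $M$: one must patch local parametrices via a partition of unity on the cotangent bundle and verify that the Carleson-Sj\"olin curvature condition on $\Sigma_x$ is preserved under the Hamiltonian flow of $a$ throughout the short time window $|t|\leq\varepsilon$. Once this geometric groundwork is in place, the endpoint Stein-Tomas oscillatory integral estimate applies directly and yields the claimed bound.
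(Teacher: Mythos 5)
The paper does not prove Theorem \ref{thm:spec}; it is imported as a known result with citations to \cite{SoggeS,CS,SS2}, so there is no internal proof to compare against. Your outline reproduces, essentially verbatim, Sogge's original argument for the spectral cluster estimate (see also \cite[Ch.~5]{Sogge}): duality plus $TT^*$ to reduce to an $L^p\to L^{p'}$ bound, replacement of $\chi_n$ by a smoothed multiplier $\psi(A-n)$ with $\wh\psi$ compactly supported, Fourier inversion through the half-wave group, the Lax--H\"ormander parametrix on a short time interval, localization to the frequency shell $a(x,\xi)\in[n-O(1),n+O(1)]$, and finally the Carleson--Sj\"olin/Stein oscillatory integral theorem, whose range $p'\geq \frac{2(d+1)}{d-1}$ is exactly the stated range $p\leq\frac{2(d+1)}{d+3}$. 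This is the correct and standard route, and the exponent bookkeeping ($n^{d-1}$ from the shell volume times the oscillatory integral decay) does produce $(1+n)^{2\delta(p)}$ for the $TT^*$ operator.

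Two remarks on precision. First, your closing worry about the Carleson--Sj\"olin condition being ``preserved under the Hamiltonian flow of $a$'' is misplaced for this particular theorem: after the $t$-integration is performed the operator is purely spatial, with phase $\varphi(x,y,\gamma)$ restricted to $\gamma\in\Sigma$, and the needed curvature hypothesis follows directly from the standing assumption on $\Sigma_x$ together with $\varphi(x,y,\xi)=\inn{x-y,\xi}+O(|x-y|^2|\xi|)$ for $x$ near $y$ (where the parametrix kernel is supported); no flow-invariance argument is required. Second, the step you pass over most quickly --- converting the shell integral into a surface-carried oscillatory integral via generalized polar coordinates and stationary phase, and verifying the corank-one Carleson--Sj\"olin condition so that Stein's theorem applies with the right power of $n$ --- is where the real work lies; delegating it to Stein's oscillatory integral theorem is legitimate, but as written it is an appeal to the literature rather than a proof. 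Since the paper itself treats Theorem \ref{thm:spec} the same way, this is acceptable.
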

This result is due to Sogge \cite{SoggeS}, Christ and Sogge \cite{CS}, and Seeger and Sogge \cite{SS2}. Theorem A can be seen as a generalization of the Stein-Tomas $L^2$ Fourier restriction theorem, which has been successfully applied, among other things, in the study of radial Fourier multipliers such as the Bochner-Riesz multipliers (see e.g. \cite{Fefferman, Christ, SeQ}). In a similar manner, Theorem A has been utilized in order to obtain certain sharp $L^p$ estimates for the operator $m(A)$ (see e.g. \cite{SoggeR, SS}). For more general results in an abstract setting, we refer the reader to \cite{GHS} and references therein.

Endpoint estimates, which often demand more delicate arguments, have also been obtained for the Bochner-Riesz multipliers \cite{ChristW1,ChristW2, SeegerW, TaoW,TaoW2}. In the case of the Riesz means for the eigenfunction expansions for pseudo-differential operators on compact manifolds, i.e. $m^\delta_t(A)$ where $m^\delta_t(\lambda) =  (1-\lambda/t)^\delta_+$, there is the following endpoint result at the critical index $\delta=\delta(p)$. We denote by $L^{p,q}$ the Lorentz spaces.
\begin{theorem} \label{thm:BR}
Let $1 \leq p \leq \frac{2(d+1)}{d+3}$. Then 
\[ \sup_{t>0}\norm{m^{\delta(p)}_t (A)f}_{L^{p,\infty}(M)} \les \norm{f}_{L^p(M)}. \]
\end{theorem}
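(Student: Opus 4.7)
The plan is to combine a dyadic decomposition of the Riesz multiplier around its singularity at $\lambda = t$ with a Calderón--Zygmund argument at the natural spatial scale $t^{-1}$, using Theorem~A as the restriction-type input, in the spirit of Seeger's endpoint proof for Bochner--Riesz and the atomic framework of Heo--Nazarov--Seeger and Lee--Rogers--Seeger. Decompose
\[
m^{\delta(p)}_t(\lambda) = \eta(\lambda/t) + \sum_{k\geq 0} 2^{-k\delta(p)}\Psi_k(\lambda/t)
\]
with $\eta\in C_c^\infty((-\infty,1/2])$ and $\Psi_k\in C_c^\infty$ supported in $\{s:\abs{1-s}\sim 2^{-k}\}$ with derivatives bounded uniformly in $k$. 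The smooth term $\eta(A/t)$ is a classical symbol-class function of $A$ and is bounded on $L^p$ uniformly in $t$ by known results, so the task reduces to the singular part $T:=\sum_k 2^{-k\delta(p)}\Psi_k(A/t)$.

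Given $\alpha>0$ and $f$ with $\norm{f}_p=1$, apply a Calderón--Zygmund decomposition at height $\alpha^p$ with stopping scale $t^{-1}$: $f=g+\sum_Q b_Q$ with $\norm{g}_2^2\les\alpha^{2-p}$, each $b_Q$ mean-zero on a ball $Q$ of radius $r_Q\leq t^{-1}$ with $\norm{b_Q}_p^p\les\alpha^p\abs{Q}$, and $\sum_Q\abs{Q}\les\alpha^{-p}$. Since $\delta(p)>0$ in the range under consideration, the uniform bound $\norm{\Psi_k(A/t)}_{2\to 2}\les 1$ sums up to $\norm{T}_{2\to 2}\les 1$, and Chebyshev controls $Tg$. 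Letting $E=\bigcup_Q Q^*$ for suitable $Ct^{-1}$-dilates $Q^*$, one has $\abs{E}\les\alpha^{-p}$, so it remains to control $Tb$ on $M\setminus E$.

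For the bad part, two inputs drive the argument. First, finite propagation of the half-wave group $e^{isA}$, combined with the Fourier representation $\Psi_k(A/t)=t\int\widehat{\Psi_k}(ts)\,e^{isA}\,ds$ (and the fact that $\widehat{\Psi_k}$ is essentially supported in $\abs{\cdot}\les 2^k$), localizes the kernel of $\Psi_k(A/t)$ to a $2^k/t$-neighborhood of the diagonal modulo rapidly decaying tails. Second, orthogonality of the spectral projections $\chi_n$ combined with Theorem~A yields
\[
\norm{\Psi_k(A/t)}_{L^p\to L^2}\les t^{\delta(p)}(2^{-k}t)^{1/2},
\]
since the supporting frequency shell has width $\sim 2^{-k}t$. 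Hölder on a $2^k/t$-ball then gives the per-atom bound $\norm{\Psi_k(A/t)b_Q}_p\les 2^{k\delta(p)}\norm{b_Q}_p$, which the $2^{-k\delta(p)}$ prefactor makes uniform in $k$. The main obstacle is summing these contributions over $k$ and over $Q$ without a logarithmic loss, which a naive triangle inequality would incur. Following the HNS--LRS atomic approach, I would organize the sum geometrically by comparing $2^k/t$ with $r_Q$: when $2^k/t\les r_Q$, the mean-zero property of $b_Q$ combined with a first-order Taylor expansion of the kernel $K_k(x,y)$ in the $y$-variable supplies the missing $2^{-k}$ gain; when $2^k/t\gg r_Q$, the dilated supports $\{d(\cdot,Q)\les 2^k/t\}$ pack efficiently and a vector-valued $L^p$ inequality for $\sum_Q \Psi_k(A/t)b_Q$ closes the estimate without loss in $k$. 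The variable-coefficient nature of $A$ is handled by passing to local geodesic normal coordinates adapted to each CZ ball and controlling parametrix errors via the symbol calculus of $A$; this is the most technical part of the argument.
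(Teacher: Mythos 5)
First, note that the paper does not prove Theorem \ref{thm:BR}: it is quoted from Christ--Sogge, Seeger and Tao, and in the open range $1<p<\frac{2(d+1)}{d+3}$ it is recovered as a corollary of Theorem \ref{thm:main2} with $q=\infty$, since $\sup_t\norm{(1-t\cdot)_+^{\delta(p)}\psi}_{B^2_{\alpha(p),\infty}}<\infty$. So the relevant comparison is with the machinery of Sections \ref{sec:reduction}--\ref{sec:res_weak}. Your front end is essentially right: the dyadic decomposition near the singularity, the bound $\norm{\Psi_k(A/t)}_{L^p\to L^2}\les t^{\delta(p)}\max(1,2^{-k}t)^{1/2}$ from Theorem \ref{thm:spec} and orthogonality of the $\chi_n$, and the per-atom computation $\norm{\Psi_k(A/t)b_Q}_p\les 2^{k\delta(p)}\norm{b_Q}_p$ all check out (modulo the slip that $\Psi_k$ cannot have derivatives bounded uniformly in $k$; the correct and needed bounds are $\abs{\Psi_k^{(n)}}\les 2^{kn}$, which is what makes $\widehat{\Psi_k}$ concentrate on $\abs{r}\les 2^k$).

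The genuine gap is exactly at the step you flag and then wave away: the summation over $k$. Because your Calder\'on--Zygmund cubes satisfy $r_Q\leq t^{-1}$, the regime $2^k/t\les r_Q$ where you invoke mean-zero cancellation is empty for $k\geq 1$, and mean zero buys nothing in any case, since the kernel of $\Psi_k(A/t)$ oscillates at frequency $t$ and a first-order Taylor expansion over a cube of radius $\leq t^{-1}$ yields no gain. Hence every scale $1\leq 2^k\leq t$ falls into your second regime, where ``the dilated supports pack efficiently and a vector-valued $L^p$ inequality closes the estimate'' is an assertion, not an argument: for fixed $k$ the supports of $\Psi_k(A/t)b_Q$ over different $Q$ overlap with unbounded multiplicity, across $k$ they are nested, and the exact spectral orthogonality of the $\Psi_k(\cdot/t)$ is destroyed once you restrict to the complement of the exceptional set; the triangle inequality over the $\sim\log_2 t$ scales then loses a logarithm, which is fatal at the critical index. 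Supplying that missing gain is the entire content of the known proofs and of Section \ref{sec:res_weak} here: a density decomposition of the atoms, an $L^2$ estimate whose $\log_2\lambda$ loss is absorbed by the $\lambda^{2-p}$ gain from the low-density hypothesis, and the quasi-orthogonality estimates \eqref{eqn:scalar} for the cross terms $\inn{G_j,G_k}$ proved via the second dyadic decomposition and the exceptional sets $E^\theta_l$. None of this appears in your sketch. Finally, your statement covers the closed range $1\leq p\leq\frac{2(d+1)}{d+3}$: the endpoint $p=1$ (Christ--Sogge) and the upper endpoint $p=\frac{2(d+1)}{d+3}$ (Tao) each require separate arguments; in particular the restricted-weak-type-plus-interpolation scheme your $\alpha^{2-p}$ bookkeeping is implicitly headed toward is unavailable at $p=1$.
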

This result is due to Christ and Sogge \cite{CS}, Seeger \cite{Seeger}, and Tao \cite{TaoW}.
More generally, Seeger \cite{Seeger} proved an endpoint result on $m(A)$ for $m$ in localized $R^2_{\alpha,q}$ spaces, where $R^2_{\alpha,q}$ is a function space which enjoys properties similar to, but is strictly contained in the Besov space $B^2_{\alpha,q}$ for $q>1$. We recall that the Bochner-Riesz multipliers can be decomposed as a sum of multipliers supported in finitely many overlapping thin annuli (see e.g. \cite{Stein}). The space $R^2_{\alpha,q}$ is distinguished from $B^2_{\alpha,q}$ by the existence of such a decomposition, which makes it convenient to exploit orthogonality. It had remained as an open question whether one can replace $R^2_{\alpha,q}$ by $B^2_{\alpha,q}$. 

Lee, Rogers and Seeger \cite{LRS} answered the question in the affirmative at least in the setting of radial Fourier multipliers. They adapted an approach used by Heo, Nazarov and Seeger \cite{HNS,HNS1}, where a necessary and sufficient condition for the $L^p$ boundedness of radial Fourier multipliers was provided in sufficiently high dimensions. The role of the $L^2$ Fourier restriction theorem was crucial in \cite{LRS}. Some of results in \cite{HNS, HNS1,LRS} were generalized by the author \cite{KimMax, Kim} to quasiradial Fourier multipliers and maximal operators associated with them.

The recent developments on radial Fourier multipliers suggest a similar improvement on the estimate for spectral multipliers $m(A)$. The main result of this paper is an endpoint estimate  for $m(A)$ in terms of Besov spaces. This answers the question arising from \cite{Seeger} in the affirmative. In what follows, we let $\alpha(p) = \delta(p)+\frac{1}{2}= d\left(\frac{1}{p}-\frac{1}{2}\right)$. 

\begin{thm}\label{thm:main2}
Let $1 < p < \frac{2(d+1)}{d+3}$, $p\leq q\leq \infty$, and $\psi$ be a non-trivial smooth function compactly supported in $(0,\infty)$. Then
\begin{equation*}
\norm{m(A) f}_{L^{p,q}(M)} \les \sup_{t>0} \norm{m(t\cdot)\psi}_{B^2_{\alpha(p),q}(\R) }\norm{ f}_{L^p(M)}.
\end{equation*} 
\end{thm}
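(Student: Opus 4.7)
The plan is to adapt the approach of Heo-Nazarov-Seeger \cite{HNS, HNS1} and Lee-Rogers-Seeger \cite{LRS} from the Euclidean radial setting to the spectral multiplier setting, with Theorem A replacing the $L^2$ Fourier restriction theorem and finite propagation speed for the half-wave group $e^{isA}$ replacing the explicit kernel analysis of radial Fourier multipliers. Using a dyadic partition $\{\phi_k\}$ on $(0,\infty)$, split $m(A) = \sum_k (m\cdot\phi_k(\cdot/2^k))(A)$; a vector-valued Littlewood-Paley square function estimate for the spectral projections $\phi_k(A/2^k)$, bounded on $L^{p,q}$ for $1<p<\infty$, reduces the problem to the uniform-in-$t$ bound $\|m(A)f\|_{L^{p,q}(M)} \les \|m(t\cdot)\psi\|_{B^2_{\alpha(p),q}(\R)}\|f\|_p$ for $m$ supported in $[t,2t]$. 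Next, a Littlewood-Paley decomposition on $\R$ gives $m(t\cdot)\psi = \sum_{j\geq 0} n_j$ with $\widehat{n_j}$ supported in $\{|s|\sim 2^j\}$ and $\|m(t\cdot)\psi\|_{B^2_{\alpha(p),q}} \sim \|(2^{j\alpha(p)}\|n_j\|_2)_j\|_{\ell^q}$. Rescaling yields $m = \sum_j m^{(j)}$ with $\widehat{m^{(j)}}$ essentially supported in $|s|\sim \rho_j := 2^j/t$; by finite propagation speed for $\cos(sA)$, the kernel of $m^{(j)}(A)$ is, up to rapidly decaying tails, supported in the diagonal tube $\{d(x,y)\les \rho_j\}$.

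\textbf{Single-scale estimate.} For each $j$ with $\rho_j \les 1$, the target is the weak-type bound
\[\|m^{(j)}(A)f\|_{L^{p,\infty}(M)} \les 2^{j\alpha(p)}\|n_j\|_{L^2(\R)}\|f\|_p.\]
Spectral decomposition $m^{(j)}(A)f=\sum_{n\sim t} m^{(j)}(A)\chi_n f$ together with Theorem A and Cauchy-Schwarz, using that $m^{(j)}$ is essentially constant on unit intervals since $\rho_j \les 1$, produces the $L^p\to L^2$ bound $\|m^{(j)}(A)f\|_{L^2}\les t^{\alpha(p)}\|n_j\|_2\|f\|_p$. To upgrade this to weak type $(p,p)$, apply a Calder\'on-Zygmund decomposition of $f$ at the appropriate level with cubes of sidelength $\rho_j$: each bad atom $b_Q$ produces output supported in a fixed dilate $Q^*$, so H\"older's inequality on $Q^*$ combined with the $L^p\to L^2$ bound yields the extra factor $\rho_j^{\alpha(p)}$, giving $\rho_j^{\alpha(p)}\cdot t^{\alpha(p)} = 2^{j\alpha(p)}$; the good part is handled by $L^2\to L^2$ boundedness with the Bernstein bound $\|n_j\|_\infty\les 2^{j/2}\|n_j\|_2$ and Chebyshev. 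For $\rho_j \ges 1$ one instead uses the plain $L^p\to L^2\to L^p$ chain, which gives a constant $\les t^{\alpha(p)}\|n_j\|_2\les 2^{j\alpha(p)}\|n_j\|_2$.

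\textbf{Summation, the main obstacle.} The final step is to assemble the single-scale estimates into an $L^{p,q}$ bound whose right-hand side is the Besov $\ell^q$-norm. Naive triangle inequality over $j$ fails for $q>1$, since it produces $\ell^1(j)$ rather than $\ell^q(j)$. Following \cite{HNS, HNS1, LRS}, the resolution is a \emph{simultaneous} Calder\'on-Zygmund decomposition of $f$ at a single level, whose bad atoms are allocated to the scales $j$ according to sidelengths matched to $\rho_j$, combined with a duality argument against $L^{p',q'}$ that exploits the $\ell^q$-structure of the Besov norm. This is the principal obstacle: it requires careful control of the cross-scale interaction of atoms on the compact manifold (where doubling and the dyadic structure must be implemented via local charts or Christ-style cubes) and it is here that the strict inequality $p<\frac{2(d+1)}{d+3}$ enters, through the exponent balance needed to close the estimates and recover the Lorentz norm via real interpolation between the weak-type $(p,p)$ endpoint ($q=\infty$) and a strong-type $L^p$ bound ($q=p$).
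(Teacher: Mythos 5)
Your overall architecture (dyadic decomposition in the spectral parameter $t$ and in the Fourier-dual parameter $j$, single-scale $L^p\to L^2$ bounds from Theorem A, approximate locality of the wave kernel, and the recognition that the $j$-summation is the crux) matches the paper's, but two essential steps are missing or do not work as stated. First, the reduction over $t$: you claim a vector-valued Littlewood--Paley square function estimate on $L^{p,q}$ reduces the theorem to the single-block bound for $m$ supported in $[t,2t]$. This reduction fails. The single-block operators map $L^p\to L^{p,q}$ with the critical Besov norm, and to reassemble them one must control $\sum_t\norm{\psi(A/t)f}_{L^p}^p$ by $\norm{f}_{L^p}^p$ after applying the Lorentz-space orthogonality lemma $\norm{\sum_t\eta(A/t)g_t}_{L^{p,q}}\lesssim(\sum_t\norm{g_t}_{L^{p,q}}^p)^{1/p}$; this is false for $p<2$. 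The paper circumvents it with the atomic decomposition via Peetre's square function: $[\psi(A/t)f]\chi_X$ is rewritten as a sum of atoms $a_{t,W}$ over Whitney cubes of the level sets of $\mathfrak{S}f$, for which $\sum_t\sum_{W}(2^k/t)^{d(\frac1p-\frac12)p}\norm{a_{t,W}}_2^p\lesssim\norm{\mathfrak{S}f}_p^p$ does hold, and each $m^t_j$ is split into short-range ($j\le k+10$) and long-range ($j>k+10$) parts relative to the sidelength $2^k/t$ of the Whitney cube. This is the actual content of the deduction and cannot be replaced by a square-function reduction.

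Second, the $j$-summation. You correctly identify that the triangle inequality over $j$ loses the $\ell^q$ structure, but your proposed resolution (``simultaneous CZ decomposition \ldots{} duality against $L^{p',q'}$'') is a gesture, not an argument, and it is not the mechanism that closes the estimate. The paper proves a restricted weak-type inequality for superpositions $\sum_j2^{jd/2}\sum_{z\in\mathcal{E}_j}b_j(A/t)f_{j,z}$ with $L^2$-normalized $b_j$, via a density decomposition of the index sets $\mathcal{E}_j$ and, for the low-density part, an $L^2$ estimate whose off-diagonal terms $\inn{G_j,G_k}$ with $k\ll j$ are controlled by quasi-orthogonality bounds of the form $|\inn{G_j,G_k}|\lesssim\lambda^p2^{jd/2}2^{-k(d-1/2)}t^{-d}\#\mathcal{E}_j$. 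Establishing these requires the Lax--H\"ormander parametrix for $e^{irA}$, the second dyadic decomposition of the symbol into $2^{-j/2}$-caps, stationary phase on $\Sigma_y$ (this is where the curvature hypothesis enters), and an exceptional-set construction in the style of Lee--Seeger showing the kernel concentrates on a union of thin plates. None of this appears in your outline, and without it the passage from single-scale weak-type bounds to the $\ell^q$ Besov norm does not close. A smaller point: $A$ is a general first-order elliptic pseudo-differential operator, so $e^{isA}$ has no finite propagation speed; the locality you invoke must come instead from the parametrix together with the kernel estimates of Lemma \ref{lem:kernel}, whose error terms then have to be tracked through every step.
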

Let us mention some earlier results. Theorem \ref{thm:main2} with $B^2_{\alpha(p),q}$ replaced by the Sobolev space $L^2_{\alpha}$ for any $\alpha>\alpha(p)$ is due to Seeger and Sogge \cite{SS}. Seeger \cite{Seeger} improved their result by replacing the Sobolev space by $R^2_{\alpha(p),q}$ discussed earlier. 

By a transplantation theorem of Mitjagin \cite{Mit} (see also \cite{KST,CS}), we may deduce an endpoint Fourier multiplier theorem from Theorem \ref{thm:main2}. Let $a$ be a smooth positive function on $\R^d/0$ which is homogeneous of degree $1$. We denote by $m(a(D))$ the Fourier multiplier transformation associated with the Fourier multiplier $m(a(\xi))$, i.e.
\[ \ft[ m(a(D))f] (\xi) = m(a(\xi)) \ft f(\xi). \]
\begin{cor}\label{cor:main2}
Let $1 < p < \frac{2(d+1)}{d+3}$, $p\leq q\leq \infty$, and $\psi$ be a non-trivial smooth function compactly supported in $(0,\infty)$. Assume that $\Sigma=\{ \xi\in \R^d: a(\xi) = 1 \}$ has everywhere non-vanishing Gaussian curvature. Then
\begin{equation*}
\norm{m(a(D)) f}_{L^{p,q}(\R^d)} \les \sup_{t>0} \norm{m(t\cdot)\psi}_{B^2_{\alpha(p),q}(\R) }\norm{ f}_{L^p(\R^d)}.
\end{equation*} 
\end{cor}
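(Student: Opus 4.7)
Corollary \ref{cor:main2} will be deduced from Theorem \ref{thm:main2} via the transplantation principle of Mitjagin \cite{Mit}, in the Lorentz-space form developed by Kenig-Stanton-Tomas \cite{KST} and Christ-Sogge \cite{CS}. The underlying idea is to realize the Fourier multiplier $m(a(D))$ on $\R^d$ as the frozen-coefficient model, at a single point, of a spectral multiplier $m(A)$ of a pseudo-differential operator living on a compact manifold, so that the bound for the latter (which we already have from Theorem \ref{thm:main2}) controls the former.

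First I would construct the model operator. Take $M=\mathbb{T}^d$ and fix a point $x_0\in M$ together with a coordinate chart identifying a neighborhood of $x_0$ with a neighborhood of the origin in $\R^d$. Using a standard symbol calculus construction together with a partition of unity, produce a positive self-adjoint first-order classical elliptic pseudo-differential operator $A$ on $M$ whose principal symbol $a_A(x,\xi)$ satisfies $a_A(x_0,\xi)=a(\xi)$ in these coordinates. The hypothesis that $\Sigma=\{a=1\}$ has everywhere nonvanishing Gaussian curvature transfers to the analogous condition on $\Sigma_{x_0}$, and by choosing the ambient construction carefully one arranges the assumption of the paper to hold at every point of $M$. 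Theorem \ref{thm:main2} then provides
\begin{equation*}
\norm{m(A)g}_{L^{p,q}(M)}\les \sup_{t>0}\norm{m(t\cdot)\psi}_{B^2_{\alpha(p),q}(\R)}\norm{g}_{L^p(M)}.
\end{equation*}

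Second, I would invoke the transplantation theorem, which asserts that the $L^p\to L^{p,q}$ operator norm of the translation-invariant multiplier $m(a_A(x_0,D))=m(a(D))$ on $\R^d$ is dominated by the $L^p\to L^{p,q}$ operator norm of $m(A)$ on $M$. Combined with the previous step, this is exactly the bound asserted in the corollary; the translation invariance of $m(a(D))$ on $\R^d$ means that no separate globalization step is required, even though the transplantation itself proceeds via a local zoom-in at $x_0$.

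\textbf{The main obstacle} is verifying the Lorentz-space version of transplantation. The standard Mitjagin argument rescales $f\mapsto f(\lambda\,\cdot\,)$ and observes that, in the high-frequency limit $\lambda\to\infty$, the action of $m(A)$ on such a function (localized by a chart cutoff) is asymptotically equal to the action of $m(a(D))$ modulo lower-order smoothing errors whose contribution is negligible in the relevant norm. Running this procedure while keeping track of restricted weak-type bounds and interpolating yields the $L^p\to L^{p,q}$ transference; this refinement is precisely the content of \cite{KST} and \cite{CS}, which I would invoke as a black box rather than reprove.
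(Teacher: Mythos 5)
Your proposal is correct and follows essentially the same route as the paper, which deduces the corollary from Theorem \ref{thm:main2} by citing Mitjagin's transplantation theorem in the form of \cite{KST,CS} (the paper gives no further detail, deferring a direct proof to the author's thesis). Your additional remarks — realizing $a(\xi)$ as the ($x$-independent) principal symbol of a periodic pseudo-differential operator on a torus so that the curvature hypothesis holds at every point, and noting that the dilation argument transfers to $L^{p,q}$ since Lorentz norms scale like $L^p$ norms — are exactly the points one would need to check and are consistent with the paper's intent.
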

The result, in the radial case $a(\xi)=|\xi|$, was obtained in \cite{LRS}. A direct proof of Corollary \ref{cor:main2} will be given in the author's thesis. The sharpness of Corollary \ref{cor:main2} (see \cite{LRS}) yields the sharpness of Theorem \ref{thm:main2} in the sense that the Besov space $B^2_{\alpha(p),q}$ cannot be replaced by any larger $L^2$-based Besov space. 

The proof of Theorem \ref{thm:main2} will be based on the atomic decomposition using Peetre's square function and the following uniform estimate.
\begin{thm}\label{thm:main}
Let $1 < p < \frac{2(d+1)}{d+3}$ and $p\leq q\leq \infty$. Assume that $m$ is a function in ${B^2_{\alpha(p),q}(\R) }$ and supported in $[1/2,2]$. Then
\begin{equation*}
\sup_{t>0} \norm{m(A/t) f}_{L^{p,q}(M)} \les \norm{m}_{B^2_{\alpha(p),q}(\R) }\norm{ f}_{L^p(M)}.
\end{equation*}
\end{thm}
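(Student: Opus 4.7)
The proof adapts the framework of Heo--Nazarov--Seeger \cite{HNS,HNS1} and Lee--Rogers--Seeger \cite{LRS} for radial Fourier multipliers to the present setting. The two manifold-specific ingredients are Sogge's spectral projection estimate (Theorem A) and the finite propagation speed of the half-wave group $e^{i\sigma A}$, which holds because $A$ is a first-order elliptic operator. The argument begins with a dyadic decomposition $m=\sum_{k\geq 0} m_k$, where $\widehat{m_k}$ is supported in $\{|\tau|\sim 2^k\}$ for $k\geq 1$ and in $\{|\tau|\leq 2\}$ for $k=0$, so that
$$\|m\|_{B^2_{\alpha(p),q}}^q \sim \sum_{k\geq 0} \bigl(2^{k\alpha(p)}\|m_k\|_{L^2}\bigr)^q.$$
After a harmless cutoff one may assume each $m_k$ to be essentially supported in a fixed neighborhood of $[1/2,2]$.

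Next, I would write
$$m_k(A/t) = \frac{1}{2\pi}\int_{\R} \widehat{m_k}(\tau)\,e^{i\tau A/t}\,d\tau$$
and invoke finite propagation speed (with respect to the geodesic metric induced by the principal symbol $a$) to show that the Schwartz kernel $K_{k,t}(x,y)$ of $m_k(A/t)$ is essentially supported in $\{d(x,y)\lesssim 2^k/t\}$, with rapidly decaying tails outside. The heart of the proof is then a single-scale restricted weak-type estimate: for every measurable $E\subset M$ and every $\lambda>0$,
$$\bigl|\{x\in M : |m_k(A/t)\chi_E(x)|>\lambda\}\bigr|\lesssim \lambda^{-p}\bigl(2^{k\alpha(p)}\|m_k\|_{L^2}\bigr)^p |E|.$$
I would prove this by performing a Calder\'on--Zygmund decomposition $\chi_E = g + \sum_\nu b_\nu$ using balls of radius $\sim 2^k/t$, at a height calibrated to $\lambda/(2^{k\alpha(p)}\|m_k\|_{L^2})$. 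The good part is controlled in $L^2$ via Theorem A together with Bernstein's inequality $\|m_k\|_{L^\infty}\lesssim 2^{k/2}\|m_k\|_{L^2}$ and Weyl's law for the spectral counting function; the bad atoms are handled by pairing $\int b_\nu = 0$ against the quasi-locality of $K_{k,t}$, with the dilated tubes around the bad balls absorbing the spatial tails. The passage from the single-scale bound to the full $L^p\to L^{p,q}$ estimate is then carried out via the Lorentz-space summation scheme of \cite[\S 3]{LRS}, which reassembles the pieces against the $\ell^q$ structure of the Besov norm through a layer-cake decomposition of $f$.

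The main obstacle lies in the single-scale bound, and it captures exactly why the statement had remained open beyond Seeger's $R^2_{\alpha(p),q}$ class in \cite{Seeger}. Because the pieces $m_k$ do not have compact Fourier support in the spectral variable, the operators $m_k(A/t)$ are not mutually spectrally orthogonal, so the orthogonality argument used for $R^2_{\alpha(p),q}$ is unavailable. The physical-space quasi-locality of $K_{k,t}$ must take over the role of orthogonality, and the spectral $L^p\to L^2$ gain from Theorem A has to be balanced delicately against the $L^2$ cancellation of the bad atoms to land at exactly the exponent $\alpha(p)$, in parallel with the radial-case analysis of \cite{HNS,LRS}. Controlling the interaction between the spatial tails of $K_{k,t}$ and the bad atoms uniformly in both $k$ and $t$ is the most technically demanding part of the argument.
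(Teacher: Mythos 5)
Your overall architecture (dyadic decomposition of $\widehat m$ into pieces with $\supp\widehat{m_k}\subset\{|\tau|\sim 2^k\}$, spatial quasi-locality of the kernels at scale $2^k/t$ via the wave group, the spectral projection bound as the substitute for Stein--Tomas, real interpolation from a restricted weak-type inequality) matches the paper's. But there is a structural gap at the center of your plan: you reduce everything to a \emph{single-scale} restricted weak-type estimate for one $m_k(A/t)$ at a time and then propose to recombine the scales by a ``Lorentz-space summation scheme.'' No such recombination is available. Summing single-scale weak-type bounds with weights $2^{k\alpha(p)}\norm{m_k}_2$ forces an $\ell^1$ condition on these weights, i.e.\ it proves the theorem only for $B^2_{\alpha(p),1}$; it cannot produce the $\ell^q$ Besov norm for $q>1$, and in particular not $q=\infty$. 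This is precisely the obstruction that kept the result at the level of Seeger's $R^2_{\alpha,q}$ classes. The restricted weak-type inequality that actually does the job (inequality \eqref{eqn:weak} in the paper, following \cite{HNS,LRS}) is a \emph{multi-scale} statement for the superposition $\sum_j 2^{jd/2}\sum_{z\in\E_j}b_j(A/t)f_{j,z}$ over sets $\E_j\subset\Z_t^d$ of $1/t$-cubes, with the measure $\mu_d(E)=\sum_j 2^{jd}t^{-d}\#\{z:(j,z)\in E\}$ coupling all scales; interpolation is then applied to the operator $T$ acting on functions on $\N\times\Z_t^d$. Proving that inequality requires controlling the cross terms $\inn{G_j,G_k}$ for $j\neq k$ in the $L^2$ expansion, and this quasi-orthogonality --- established in the paper via the second dyadic decomposition into plates $P_{j,D}$, the splitting of $b_j$ into pieces $b_j^n$ with $\supp\widehat{b_j^n}\subset\{|r|\approx n2^k\}$, an exceptional-set construction in the spirit of Lee--Seeger, and the stationary-phase bound $(1+t|x-y|)^{-(d-1)/2}$ coming from the curvature of $\Sigma_y$ --- is the actual novelty of the proof. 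Your proposal names the difficulty (``quasi-locality must take over the role of orthogonality'') but supplies no mechanism for it; the Calder\'on--Zygmund decomposition with mean-zero atoms at a single scale does not address the interaction between different $k$.

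Two secondary points. First, finite propagation speed only localizes the kernel when $2^k/t\les\epsilon$; on a compact manifold the wave wraps around, so the regime $2^k\ges\epsilon t$ needs a separate argument (the paper handles it by passing to $L^2$ via compactness and using orthogonality of the $\chi_n$, where in fact only $B^2_{\alpha(p),\infty}$ is needed). Second, the Fourier-integral parametrix is needed not merely for the support statement $d(x,y)\les 2^k/t$ but for the finer plate structure of the kernel, without which the density decomposition and the counting of $\E_k(z)\cap E(z,n2^k/t)$ against the low-density hypothesis $\#\E_j\cap Q\leq\lambda^p$ cannot be run.
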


Let us briefly discuss some of standard ingredients of the proof. The starting point of our analysis is the representation of $m(A/t)$ using the Fourier transform;
\begin{equation*}
m(A/t)f = \frac{1}{2\pi}\int t\widehat{m}(tr) e^{i r A}f dr,
\end{equation*}
where $e^{irA} f = \sum_l e^{ir\lambda_l} E_l f$ solves the Cauchy problem for $i\partial_r +A$. We distinguish the integral into two parts; $|r|\geq \epsilon$ and $|r|<\epsilon$, for a sufficiently small $\epsilon>0$. To handle the case $|r|\geq \epsilon$, one can replace $L^p$ norm with $L^2$ norm by H\"{o}lder's inequality without any loss by the compactness of $M$. Then we apply an orthogonality argument and Theorem \ref{thm:spec} as in \cite{SS}. We shall see that this part, in fact, behaves better; it is sufficient to assume that $m\in B^2_{\alpha(p),\infty}$. In the case $|r|\leq \epsilon$, there is a parametrix constructed by Lax and H\"{o}rmander (see \cite{Hor}), which provides an approximation of $e^{i r A}$ by Fourier integral operators. This makes it possible to apply arguments for quasiradial Fourier multipliers (cf. \cite{Kim}).

The novelty of this paper perhaps lies in certain quasi-orthogonality estimates, which control the interaction between operators associated with different dyadic pieces of $\widehat{m}$ (see Section \ref{sec:decom}). The proof of the estimates is based on \cite{LRS}, but it requires finer estimates. We employ the so-called second dyadic decomposition (see e.g. \cite{SSS,Stein}) and adapt an idea from the work by Lee and Seeger \cite{LS} for the construction of an exceptional set.

This paper is organized as follows. In Section \ref{sec:pre}, we provide some preliminary standard estimates. In Section \ref{sec:reduction}, we reduce Theorem \ref{thm:main} to a normalized local estimate and further to a restricted weak-type inequality. We prove the restricted weak-type inequality in Section \ref{sec:L1} and \ref{sec:res_weak}. In Section \ref{sec:main2}, we prove Theorem \ref{thm:main2}.

We close this section with a final note on notations. We denote by $N$ a sufficiently large number (with respect to $d$) which may differ from line to line. We use the notations $\Box = O(B)$ and $\Box \les B$ to indicate $|\Box| \leq C B$ for a harmless constant $C$, which is allowed to depend on $M,A,d,p,q,\epsilon,N$.

\subsection*{Acknowledgment}
This paper will be a part of the author's PhD thesis. He would like to thank his advisor Andreas Seeger for his support, guidance and constant encouragement throughout this project. This work was supported in part by the National Science Foundation.

\section{Preliminary estimates} \label{sec:pre}
\subsection{Fourier integral estimates} \label{sec:FIO}
In what follows, we denote by $S(x,y)$ the integral kernel of an operator $S$, and vice versa.
By a compactness argument, we may assume throughout the paper that $f$ is supported in a compact subset $\Omega_0$ of a coordinate patch $\Omega \subset M$. We shall identify $\Omega$ to a relatively compact open subset of $\R^d$. In addition, we shall fix a compact subset $X$ of $\Omega$ whose interior contains $\Omega_0$. 

We recall that, there is $\epsilon>0$ such that if $|r|\leq 100\epsilon$, then $e^{irA}$ admits an approximation by a Fourier integral operator (see e.g. \cite{Hor, Shubin, Sogge}); 
\[ e^{irA}(x,y) = S_r(x,y)  + E_r(x,y), \]
where $E_r(x,y)=E(r,x,y)$ is an ``error" term satisfying 
$|\partial_r^N E(r,x,y)| \leq C_N.$ 

The support of $S_r(x,y)$ can be chosen to be a sufficiently small neighborhood of the diagonal $\{ (x,x): x\in M \}$ by taking sufficiently small $\epsilon$. In particular, we may assume that $S_r f$ is supported in $X$. In addition, we may assume that $S_r \tilde{f}$ is supported in a compact subset of $\Omega$ if $\tilde{f}$ is supported in $X$. Moreover, in local coordinates, $S_r(x,y)$ can be expressed as 
\begin{equation*}
S_r(x,y) = \int e^{i\varphi(x,y,\xi)} e^{i ra(y,\xi)} q(r,x,y,\xi) d\xi,
\end{equation*}
where $q$ is a symbol of order zero in $\xi$ variable. We may assume without loss of generality that $q$ vanishes if $|r|\geq 200\epsilon$. The phase function $\varphi$ satisfies $\varphi(x,y,\xi)=\inn{x-y,\xi} + O(|x-y|^2|\xi|)$ and is homogeneous of degree 1 in the $\xi$ variable. We also note that the principal symbol $a(y,\xi)$ is homogeneous of degree 1 in the $\xi$-variable.

We shall use the following useful kernel estimate due to Seeger to control ``error terms".
\begin{lem}[ {\cite[Proposition 3.1]{Seeger} }] \label{lem:kernel}
Let $b$ be a tempered function such that $\supp\widehat{b} \subset [-2^{j+5},2^{j+5}]$. Assume that $2^j \leq \epsilon t$. 

We have
\begin{equation*}
\abs{\int \widehat{b}(r) E(r/t,x,y) dr } \les \sup_{\lambda \in \R} \frac{|b(\lambda)|}{(1+2^j|\lambda|)^N}.
\end{equation*}
In particular, $b(A/t)(x,y)$ follows the same estimate if $x\notin \Omega$ and $y\in X$.

If $x,y \in \Omega$, then there is a constant $C=C_{a,\epsilon}$ such that the following holds; 
\begin{equation*}
|b(A/t)(x,y)| \les \sup_{\lambda \in \R} \frac{|b(\lambda)|}{(1+2^j|\lambda|)^N}  \left(\frac{1}{|x-y|^{d-1}} + \frac{2^j/t}{|x-y|^{d+1}} \right)
\end{equation*}
provided that $|x-y|\geq C2^j/t$.
\end{lem}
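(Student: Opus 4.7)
The plan is to start from the Fourier inversion
\[b(A/t) = \frac{1}{2\pi}\int \widehat{b}(r) e^{irA/t}\,dr\]
and use the parametrix $e^{irA/t}=S_{r/t}+E(r/t,\cdot,\cdot)$, which is available throughout $\supp\widehat{b}\subset[-2^{j+5},2^{j+5}]$ because $|r|/t\leq 32\epsilon$ there, using the hypothesis $2^j\leq\epsilon t$. This splits the kernel of $b(A/t)$ into an error contribution (the integral against $E$) and a Fourier-integral contribution (the integral against $S_{r/t}$).

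For the first estimate, I would insert a smooth cutoff $\chi$ equal to $1$ on $\supp\widehat{b}$ and supported in $[-2^{j+6},2^{j+6}]$, and apply Parseval to rewrite
\[\int \widehat{b}(r)\,E(r/t,x,y)\,dr = \int b(\lambda)\,\widehat{g}(\lambda)\,d\lambda\]
where $g(r)=\chi(r)E(r/t,x,y)$. Using $|\partial_r^N E|\leq C_N$ (with the chain rule introducing factors of $t^{-1}\leq\epsilon 2^{-j}$) together with $|\chi^{(k)}|\les 2^{-jk}$, one checks $\|g^{(M)}\|_\infty\les 2^{-jM}$ for every $M$. Integration by parts $M$ times then yields $|\widehat{g}(\lambda)|\les 2^j(1+2^j|\lambda|)^{-M}$, and pairing this with the tempered bound $|b(\lambda)|\leq K(1+2^j|\lambda|)^N$, where $K=\sup_\mu |b(\mu)|/(1+2^j|\mu|)^N$, and taking $M>N+1$ (after the change of variables $\mu=2^j\lambda$) gives the claimed $O(K)$ bound. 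The second half of the first assertion, for $x\notin\Omega$, $y\in X$, is then immediate since $S_{r/t}(x,y)=0$ in that range by the support properties of the parametrix.

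For the last assertion, with $x,y\in\Omega$ and $|x-y|\geq C2^j/t$, the remaining piece is
\[\int \widehat{b}(r)\,S_{r/t}(x,y)\,dr = \int\int \widehat{b}(r)\,e^{i\varphi(x,y,\xi)+i(r/t)a(y,\xi)}\,q(r/t,x,y,\xi)\,d\xi\,dr.\]
Using $\nabla_\xi\varphi=(x-y)+O(|x-y|^2)$ (from the Taylor expansion of $\varphi$ and the degree-zero homogeneity of $\nabla_\xi\varphi$ in $\xi$) and $|(r/t)\nabla_\xi a(y,\xi)|=O(2^j/t)$ uniformly on $\supp\widehat{b}$, we get $|\nabla_\xi[\varphi+(r/t)a]|\ges |x-y|$ provided $C=C(a,\epsilon)$ is chosen large enough. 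I would then do a dyadic decomposition in $|\xi|$ and repeated integration by parts in $\xi$ on each shell to produce the decay in $|x-y|$. The leading $|x-y|^{-(d-1)}$ emerges as the principal contribution after summing the dyadic scales; the subleading $(2^j/t)|x-y|^{-(d+1)}$ arises when $\xi$-derivatives fall on the $r$-dependent part of the amplitude $q$, contributing an extra factor $2^j/t$ via the Parseval/tempered-bound reasoning of the first half.

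The main obstacle is the oscillatory-integral analysis in the third step: extracting exactly the form $\frac{1}{|x-y|^{d-1}}+\frac{2^j/t}{|x-y|^{d+1}}$ requires careful bookkeeping of the dyadic $|\xi|$-scales, the zero-order symbol estimates on $q$, and the interaction between the $\xi$- and $r$-integrations. The uniform-in-$r$ non-stationary-phase bound, which rests on $2^j\leq\epsilon t$ (ensuring $|r|/t=O(2^j/t)$ is much smaller than $|x-y|$), is what ultimately drives the estimate; without this separation of scales, one would need full stationary-phase analysis and different bounds would appear.
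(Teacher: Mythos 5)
First, note that the paper does not prove this lemma: it is quoted verbatim from \cite[Proposition 3.1]{Seeger}, so there is no internal proof to compare against. Your treatment of the first two assertions is correct and is the standard argument: the cutoff-plus-Parseval step, the bound $\|g^{(M)}\|_\infty\lesssim 2^{-jM}$ (using $t^{-1}\leq\epsilon 2^{-j}$), and hence $|\widehat g(\lambda)|\lesssim 2^j(1+2^j|\lambda|)^{-M}$, do yield the $E$-term estimate, and the case $x\notin\Omega$, $y\in X$ indeed reduces to it because $S_{r/t}(x,y)$ vanishes there.

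The third assertion, however, is where your argument has a genuine gap, and it is not merely a matter of "careful bookkeeping." Repeated integration by parts in $\xi$ using only the lower bound $|\nabla_\xi\Phi|\gtrsim|x-y|$ gives, on the dyadic shell $|\xi|\sim 2^k$, a gain of at most $\max(2^{-k},2^j/t)/|x-y|$ per integration (the amplitude produced by the $r$-integration is \emph{not} a symbol: its $\xi$-derivatives gain only $2^j/t$, by Bernstein, once $|\xi|\gtrsim t2^{-j}$, and $|b(a(y,\xi)/t)|$ may grow like $(2^j|\xi|/t)^N$, so even absolute convergence of the high shells needs an argument you have not supplied). Summing $2^{kd}(2^k|x-y|)^{-M}$ over the shells with $2^k\lesssim t2^{-j}$ is dominated by $2^k\sim|x-y|^{-1}$ and produces $|x-y|^{-d}$, which is strictly \emph{larger} than the claimed bound $|x-y|^{-(d-1)}+(2^j/t)|x-y|^{-(d+1)}$ throughout the range $2^j/t\ll|x-y|\ll 1$. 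The missing power of $|x-y|$ cannot be recovered by non-stationary phase alone; it comes from the curvature hypothesis on $\Sigma_y=\{a(y,\xi)=1\}$. The actual proof passes to generalized polar coordinates $\xi=\rho\gamma$, $\gamma\in\Sigma_y$, applies stationary phase on the cosphere to get $\int_{\Sigma_y}e^{i\rho\varphi(x,y,\gamma)}d\mu_y(\gamma)=\sum_\pm c_\pm e^{\pm i\rho\psi(x,y)}(1+\rho|x-y|)^{-(d-1)/2}+O((1+\rho|x-y|)^{-N})$ with $|\psi(x,y)|\sim|x-y|$, and then analyzes the one-dimensional $\rho$-integral, where the hypothesis $|x-y|\geq C2^j/t$ enters as a separation between $|\psi(x,y)|$ and the Fourier support of $b(\cdot/t)$ (this is exactly the mechanism the paper reuses in its proofs of \eqref{eqn:sca11} and of Lemma \ref{lem:ptbound}). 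The two terms $|x-y|^{-(d-1)}$ and $(2^j/t)|x-y|^{-(d+1)}$ arise from this radial analysis; your explicit claim that "without this separation of scales, one would need full stationary-phase analysis" has it backwards, since stationary phase on $\Sigma_y$ is needed precisely to reach the stated exponents. You also do not explain how the normalizing factor $\sup_\lambda|b(\lambda)|(1+2^j|\lambda|)^{-N}$, as opposed to $\|\widehat b\|_1$ or $\|b\|_\infty$, is extracted in this part.
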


We also need the following result.
\begin{lem}\label{lem:symbol}
Assume that $m$ is a symbol of order $-\delta$ for some $\delta>0$, i.e.
\[ |m^{(n)}(\lambda)| \leq C_n (1+|\lambda|)^{-n-\delta}, \] 
for all $n\geq 0$. Then for $1 \leq p \leq \infty$, we have
\[ \sup_{t>0 }\norm{m(A/t) f}_{L^p(M)} \les \norm{f}_{L^p(M)}. \]
\end{lem}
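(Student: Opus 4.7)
My plan is to view $m(A/t)$ in local coordinates as a classical $t$-dependent pseudo-differential operator, exhibit uniform-in-$t$ pointwise kernel bounds of dilation type, and then conclude by Schur's test.

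Starting from the Fourier representation
\[ m(A/t) = \frac{t}{2\pi}\int\wh m(tr)e^{irA}\,dr, \]
I would split via a smooth cutoff at scale $\epsilon$ in $r$ into $T_{\text{loc}}+T_{\text{glob}}$. For $T_{\text{loc}}$ (supported in $|r|\le\epsilon$) the Lax--H\"ormander parametrix $e^{irA}=S_r+E_r$ applies: the Fourier integral $S_r$, after swapping integrations and Taylor-expanding $q$ in $r$ around $r=0$, becomes an oscillatory integral in $\xi$ with amplitude $\sim m(a(y,\xi)/t)$ plus lower-order symbols, while the error part $E_r$ is bounded by the second estimate in Lemma \ref{lem:kernel} after a standard dyadic decomposition $m=\sum_j m\phi_j$. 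For $T_{\text{glob}}=g_t(A)$ with $g_t(\lambda)=\frac{1}{2\pi}\int(1-\chi_\epsilon(s/t))\wh m(s)e^{is\lambda/t}\,ds$, the symbol hypothesis gives $|\wh m(s)|\les_N|s|^{-N}$ for $|s|\ge 1$, and repeated integration by parts in $s$ shows that $g_t$ obeys $|g_t^{(k)}(\lambda)|\les(1+|\lambda|/t)^{-N}(1+|\lambda|)^{-k}$ for all $N,k$ uniformly in $t$, so that $g_t(A)$ is a uniformly bounded pseudo-differential operator of arbitrarily negative order.

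For $T_{\text{loc}}$, the chain rule together with the homogeneity of $a(y,\xi)$ in $\xi$ yields the key symbol estimate
\[ |\partial_y^\beta\partial_\xi^\alpha m(a(y,\xi)/t)| \les (1+|\xi|/t)^{-\delta}(1+|\xi|)^{-|\alpha|}, \]
uniformly in $t>0$. Repeated integration by parts in $\xi$ against $\varphi$, using $\nabla_\xi\varphi=x-y+O(|x-y|^2)$, then gives $|T_{\text{loc}}(x,y)|\les t^d(1+t|x-y|)^{-N}$ for every $N$. Combining with the bounds on $T_{\text{glob}}$ and on the error from $E_r$ produces $\sup_x\int|m(A/t)(x,y)|\,dy\les 1$ uniformly in $t$, and the symmetric bound with the roles of $x$ and $y$ exchanged follows by applying the same argument to the adjoint $\overline{m}(A/t)$. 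Schur's test then delivers the uniform $L^p\to L^p$ bound for the full range $1\le p\le\infty$.

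The main obstacle is verifying the symbol estimate displayed above, and its analogs for the lower-order amplitudes arising from the parametrix, uniformly in $t$: each differentiation of $m(a(y,\xi)/t)$ produces chain-rule terms of the schematic form $t^{-k}m^{(k)}(a/t)\,\partial^{\alpha'}a$, and a careful Fa\`a di Bruno bookkeeping is required to show that every such term is controlled by the single weight $(1+|\xi|/t)^{-\delta}$ together with the loss $(1+|\xi|)^{-|\alpha|}$. Once this estimate is in hand, the rest of the argument consists of standard integration by parts and Schur-test computations.
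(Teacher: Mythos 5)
Your overall strategy (reduce to kernel bounds and conclude by Schur's test / $L^1$--$L^\infty$ interpolation) is in the same spirit as the paper's proof, but two of your key claimed estimates are false as stated, and repairing them forces you back to the dyadic decomposition that the paper performs at the outset. The central problem is the claimed pointwise bound $|T_{\mathrm{loc}}(x,y)|\les t^d(1+t|x-y|)^{-N}$. The amplitude $m(a(y,\xi)/t)$ is only $O((1+|\xi|/t)^{-\delta})$, which is \emph{not} integrable in $\xi$ when $\delta\leq d$, so the trivial bound gives nothing on the diagonal, and integration by parts gives nothing when $t|x-y|\les 1$. The kernel genuinely blows up there: in the model case $m(\lambda)=(1+\lambda^2)^{-\delta/2}$, $a(\xi)=|\xi|$, $t=1$, the kernel is the Bessel potential, which behaves like $|x-y|^{-(d-\delta)}$ near the diagonal for $0<\delta<d$. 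The kernel is still uniformly in $L^1$, but to see this you must split the symbol dyadically in $|\xi|$ (equivalently in the spectral variable), obtaining for the piece at frequency $|\xi|\sim 2^k t$ the bound $2^{-k\delta}(2^kt)^d(1+2^kt|x-y|)^{-N}$ and summing in $k$. This is exactly the decomposition $m=\sum_k m^k$ with which the paper's proof begins (combined with the kernel estimates of \cite[Lemma 2.4]{SS} and $L^1$--$L^\infty$ interpolation), so the missing step is not a technicality: it is the main idea of the proof.

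The second gap is the small-$t$ regime. Your global term is $g_t(\lambda)=\frac{1}{2\pi}\int_{|s|\gtrsim \epsilon t}\widehat m(s)e^{is\lambda/t}\,ds$, and the rapid decay $|\widehat m(s)|\les_N|s|^{-N}$ only holds for $|s|\geq 1$; for $t\ll 1$ the region $\epsilon t\leq |s|\leq 1$ contains the singularity of $\widehat m$ at $s=0$ (which carries the $|\lambda|^{-\delta}$ decay of $m$), so $g_t$ is essentially all of $m(\cdot/t)$ and is certainly not an operator of arbitrarily negative order uniformly in $t$. The paper disposes of small $t$ by a different mechanism: after the dyadic decomposition, $m^k(A/t)$ for $t\leq 2^{-k}$ involves only eigenvalues $O(1)$, and its kernel is controlled by Weyl's formula and sup-norm bounds for eigenfunctions. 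You would need some such argument; the parametrix alone cannot cover it. (A more minor point: your symbol estimate should carry the weight $(t+|\xi|)^{-|\alpha|}$ rather than $(1+|\xi|)^{-|\alpha|}$ if it is to be uniform for $t<1$.)
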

\begin{proof}
This seems to be a standard result, but we sketch the proof for completeness.
Write $m=\sum_{k\geq0} m^k$, where $m^k$ is $m$ smoothly cut off to the set $\{ \lambda: |\lambda|\sim 2^k \}$ for $k\geq 1$ and $[-2,2]$ for $k=0$. By the triangle inequality, it suffices to prove that 
\begin{equation}\label{eqn:mkt}
\sup_{t>0} \norm{m^k_t(A) f}_{L^p(M)} \les 2^{-k\delta} \norm{f}_{L^p(M)},
\end{equation}
where $m^k_t = m^k(\cdot/t)$. We may assume that $f$ is supported in a compact subset of a coordinate patch $\Omega\subset M$.

Note that if $0<t \leq 2^{-k}$, then $m^k_t(A)$ involves only eigenvalues bounded by $O(1)$. Thus, $m^k_t(A)(x,y)$ is bounded by $O(\norm{m^k}_\infty)$ which is $O(2^{-k\delta})$. This can be verified by Weyl's formula and sup-norm bounds for eigenfunctions (see e.g. \cite{Sogge}). Therefore, we may assume that the sup is taken over $t > 2^{-k}$ in \eqref{eqn:mkt}. 

Observe that for $k,n\geq  0$, $m^k_t$ satisfies
\[ (2^kt)^n \abs{ \left(\frac{d}{d\lambda}\right)^n {m^k_t} (\lambda)} \les 2^{-k\delta}. \]
An examination of the proof of \cite[Lemma 2.4]{SS} (see also \cite{Taylor}) shows that we may write
\[ m^k_t(A) = S^k_t + R^k_t, \]
where $R^k_t$ is a negligible error term, and the kernel of $S^k_t$ satisfies (in local coordinates)
\[ |S^k_t(x,y)|  \les 2^{-k\delta} (2^kt)^d (1+2^kt|x-y|)^{-N}. \]
This proves that the $L^1(\Omega)$ and $L^\infty(\Omega)$ operator norms of $S^k_t$ are $O(2^{-k\delta})$, which yields \eqref{eqn:mkt} by interpolation.
\end{proof}

\subsection{$L^p \to L^2$ estimates}
Let $\eta_j= \eta(\cdot/2^j)$ for a fixed even smooth bump function $\eta$ supported in $\{\lambda : |\lambda| \in [1/4,4] \}$ for $j\geq 1$. We shall further assume that $\eta$ is $1$ on $\{\lambda : |\lambda| \in [1/2,2] \}$, but this is not required for the following estimate. 
\begin{lem} \label{lem:basiclp}
Let $\beta$ be an $L^2$ function supported on $\{\lambda : |\lambda| \in [1/8,8] \}$ and $j\geq 1$. Then
\begin{equation}
\norm{\beta*\check{\eta_j} (A/t) f}_{L^2(M)} \les t^{\delta(p)} \max(t^{1/2}, 2^{j/2}) \norm{\beta}_{L^2}\norm{f}_{L^p(M)}.
\end{equation}
\end{lem}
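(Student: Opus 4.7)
I would approach the lemma by combining the spectral decomposition with Theorem~A and then invoking the bandlimited structure of $g := \beta * \check{\eta_j}$. Grouping eigenvalues into unit spectral intervals, the spectral theorem gives
\[
\norm{g(A/t) f}_{L^2}^2 = \sum_l |g(\lambda_l/t)|^2 \norm{E_l f}_{L^2}^2 \les \sum_n b_n \norm{\chi_n f}_{L^2}^2,
\]
where $b_n := \sup_{\lambda \in [n/t,\, (n+1)/t]} |g(\lambda)|^2$. Theorem~A applied to each $\chi_n$ then reduces the problem to the numerical estimate
\[
\sum_n b_n (1+n)^{2\delta(p)} \les t^{2\delta(p)} \max(t, 2^j)\, \norm{\beta}_{L^2}^2.
\]

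The main contribution comes from $n$ with $n/t$ in a bounded neighborhood of $\supp \beta \subset \{|\lambda| \in [1/8, 8]\}$, on which $(1+n)^{2\delta(p)} \les t^{2\delta(p)}$. For $n/t$ far from $\supp \beta$, the kernel bound $|\check{\eta_j}(\lambda)| \les 2^j(1+2^j|\lambda|)^{-N}$ and Cauchy--Schwarz yield rapid decay of $|g(\lambda)|$, so the tail contribution is dominated by the main term for $N$ sufficiently large. It thus remains to establish
\[
\sum_n b_n \les \max(t, 2^j)\, \norm{\beta}_{L^2}^2.
\]

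To this end, I would exploit that $\widehat{g} = \widehat{\beta}\,\eta_j$ is supported in $\{|r| \sim 2^j\}$. Choose a Schwartz function $\Phi_j$ with $\widehat{\Phi_j}$ equal to $1$ on this set, $\norm{\Phi_j}_{L^1} \les 1$, and $|\Phi_j(\lambda)| \les 2^j(1+2^j|\lambda|)^{-N}$. The identity $g = g * \Phi_j$ together with Cauchy--Schwarz yield the reproducing estimate
\[
|g(\lambda)|^2 \les 2^j \int |g(\lambda')|^2 (1 + 2^j|\lambda - \lambda'|)^{-N}\, d\lambda'.
\]
Taking the supremum over $\lambda \in I_n := [n/t, (n+1)/t]$, summing in $n$, and interchanging the sum and integral,
\[
\sum_n b_n \les 2^j \int |g(\lambda')|^2 \Big( \sum_n (1 + 2^j\, \operatorname{dist}(\lambda', I_n))^{-N} \Big)\, d\lambda'.
\]
For each fixed $\lambda'$, the inner sum over intervals of length $1/t$ is $\les \max(1, t/2^j)$; combined with Plancherel's identity $\norm{g}_{L^2} \les \norm{\beta}_{L^2}$, this yields the required bound.

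The principal difficulty lies in the case $2^j > t$. A naive Sobolev bound on each $I_n$ via $\sup_{I_n}|g|^2 \les |I_n|^{-1}\int_{I_n}|g|^2 + |I_n|\int_{I_n}|g'|^2$ combined with $\norm{g'}_{L^2} \les 2^j \norm{g}_{L^2}$ produces only $\sum_n b_n \les (t + 2^{2j}/t)\norm{\beta}_{L^2}^2$, which is too lossy in this range. The reproducing formula is precisely what is needed to obtain the sharper factor $\max(t, 2^j)$: it reflects that, although $g$ oscillates on the scale $2^{-j}$, only $O(\max(1, t/2^j))$ of the intervals $I_n$ are essentially relevant in the weighted sense.
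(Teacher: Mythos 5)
Your proposal is correct and follows essentially the same route as the paper: orthogonality of the eigenspaces, Theorem A applied to the spectral clusters $\chi_n$, rapid decay of $\beta*\check{\eta_j}$ off a bounded set to kill the tail, and a band-limited sampling bound $\sum_n b_n \les \max(t,2^j)\norm{\beta}_{L^2}^2$ for the main term. The only difference is that the paper quotes this last estimate as a Plancherel--Polya inequality from Seeger--Sogge, whereas you prove it inline via the reproducing identity $g=g*\Phi_j$ and Cauchy--Schwarz, which is precisely the standard proof of that cited inequality.
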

\begin{proof}
When $\beta*\check{\eta_j}$ is multiplied by a compactly supported function, then the estimate is given in \cite{SS}. To handle our case, we add one more standard error estimate. 

We need the following result obtained by the Plancherel-Polya lemma (see \cite[Equation (3.12)]{SS}).
\begin{equation}\label{eqn:PPin}
\Big( \sum_{0 \leq n \leq  16t} \sup_{\lambda_l \in [n,n+1]} |\beta*\check{\eta_j} (\lambda_l/t)| ^2 \Big)^{1/2} \les \max( t^{1/2},2^{j/2}) \norm{\beta*\check{\eta_j}}_{L^2}.
\end{equation}
By orthogonality, we may bound $\norm{\beta*\check{\eta_j} (A/t) f}_{L^2(M)}$ by
\begin{align*}
&\leq \Big( \sum_n \sum_{\lambda_l\in [n,n+1]} |\beta*\check{\eta_j}(\lambda_l/t)|^2 \norm{E_l f}_{L^2(M)}^2 \Big)^{1/2} \\
&\leq \Big( \sum_n \sup_{\lambda_l\in [n,n+1]} |\beta*\check{\eta_j}(\lambda_l/t)|^2 \norm{\chi_n f}_{L^2(M)}^2 \Big)^{1/2} \\
&\les \Big( \sum_n (1+n)^{2\alpha(p)-1} \sup_{\lambda_l\in [n,n+1]} |\beta*\check{\eta_j}(\lambda_l/t)|^2 \Big)^{1/2} \norm{f}_{L^p(M)}.
\end{align*}
Note that $\beta*\check{\eta_j}$ is essentially supported in $[1/16,16]$; if $|\lambda| \notin [1/16,16]$, then 
\[
|\beta*\check{\eta_j}(\lambda)| \les 2^{-jN}\norm{\beta}_1 (1+2^j|\lambda|)^{-N}.
\]
Therefore, if $\lambda_l \sim 2^k t$ for sufficiently large $k$, say $k\geq 4$, then
\[ 
|\beta*\check{\eta_j}(\lambda_l/t)| \les 2^{-jN} 
2^{-kN} \norm{\beta}_1.
\]
Thus, we may bound $\norm{\beta*\check{\eta_j} (A/t) f}_{L^2(M)}$ by a constant times
\begin{align*}
&t^{\delta(p)} \Big( \sum_{n\leq 16t}  \sup_{\lambda_l\in [n,n+1]} |\beta*\check{\eta_j}(\lambda_l/t)|^2 \Big)^{1/2} \norm{f}_{L^p(M)}\\
&+ \Big( \sum_{k\geq 4} (2^kt)^{2\alpha(p)-1} \sum_{2^k t \leq n \leq 2^{k+1}t}  \sup_{\lambda_l\in [n,n+1]} |\beta*\check{\eta_j}(\lambda_l/t)|^2 \Big)^{1/2} \norm{f}_{L^p(M)} \\
&\les t^{\delta(p)}\max( t^{1/2},2^{j/2}) \norm{\beta}_{L^2} \norm{f}_{L^p(M)}.
\end{align*}
Here, we have used \eqref{eqn:PPin} and $\norm{\beta*\check{\eta_j}}_{2} \les \norm{\beta}_{2}$.
\end{proof}

\subsection{Dyadic Decomposition} \label{sec:bdcom}
We first remark that if $m\in L^\infty([1/2,2])$, then
\[
\sup_{0<t\leq 1}\norm{m(A/t)f}_{L^\infty(M)}  \les \norm{m}_{\infty} \norm{f}_{L^1(M)},
\]
which automatically implies $L^p$ estimates by H\"{o}lder's inequality (see the proof of Lemma \ref{lem:symbol}). Thus, we may restrict our attention to $t\geq 1$.

Let $\phi$ be a smooth non-negative even function supported on $\{r : |r|\leq 2 \}$ which is $1$ on $\{r: |r|\leq 1 \}$. Set $\phi_j (r) = \phi(r/2^j) - \phi(r/2^{j-1})$. It follows that $\phi_j$ is supported on $I_j := \{r: |r| \in [2^{j-1},2^{j+1}] \}$ and that $\sum_{j\geq 1} \phi_j(r) =1$ if $|r|\geq 2$. Let $\phi_0(r) = 1- \sum_{j\geq 1} \phi_j(r)$.

We decompose $m$ as $\sum_{j\geq 0} m_j$, where $\widehat{m_j} = \wh{m} \phi_j$. Note that by Lemma \ref{lem:basiclp}, we have
\begin{equation} \label{eqn:mjb}
\norm{ m_j(A/t) f }_{L^2(M)} \les t^{\delta(p)} \max( t^{1/2},2^{j/2}) \norm{ m_j}_2 \norm{f }_{L^p(M)}.
\end{equation}

By compactness and \eqref{eqn:mjb}, we may bound $\bnorm{\sum_{2^j \geq \epsilon t} m_j (A/t) f }_{L^p(M)}$ by a constant times
\begin{equation} \label{eqn:restest}
\begin{split}
\bnorm{\sum_{2^j \geq \epsilon t} m_j (A/t) f }_{L^2(M)} &\les t^{\delta(p)} \sum_{2^j \geq \epsilon t}  2^{j/2} \norm{m_j}_2 \norm{f }_{L^p(M)}\\
&\les t^{\delta(p)} \sum_{2^j \geq \epsilon t} 2^{-j(\alpha(p)-1/2)} \norm{m}_{B^2_{\alpha(p),\infty}} \norm{f }_{L^p(M)} \\ 
&\les \norm{m}_{B^2_{\alpha(p),\infty}} \norm{f }_{L^p(M)}.
\end{split}
\end{equation}

Moreover, we have $\norm{m_0 (A/t)}_{L^p(M) \to L^p(M)} \les \norm{m}_2$ by Lemma \ref{lem:symbol}. Therefore, for the proof of Theorem \ref{thm:main}, it remains to show that
\begin{equation} \label{eq:ulgo1}
\norm{ \sum_{1< 2^j < \epsilon t} m_j(A/t) f}_{L^{p,q}(M)} \les \norm{m}_{B^2_{\alpha(p),q} }\norm{ f}_{L^p(M)},
\end{equation}
with an implicit constant uniform in $t\geq 1$ with the $\epsilon$ fixed in Section \ref{sec:FIO}. In what follows, $\sum_{1< 2^j < \epsilon t}$ shall be often abbreviated to $\sum_j$.

\section{Further reductions} \label{sec:reduction}
In this section, we reduce \eqref{eq:ulgo1} to a certain restricted weak-type inequality.
\subsection{Reduction to a local estimate} \label{sec:local}
We shall use a cut-off function $\tilde{\eta}$ supported in $\{\lambda : |\lambda| \in [1/16,16] \}$ which is $1$ on $\{\lambda : |\lambda| \in [1/8,8] \}$. Then we may write \[ \tilde{\eta}(A/t) = S_t  + R_t,\] where $\norm{R_t f}_p \les t^{-N} \norm{f}_p$ and $S_tf$ is supported in $X$. Moreover, $|S_t(x,y)| \les t^d(1+t|x-y|)^{-N}$ in local coordinates. We refer the reader to \cite[Lemma 2.4]{SS} for details.

The purpose of this subsection is to reduce \eqref{eq:ulgo1} to the following proposition.
\begin{prop} \label{prop:main} Let $1 \leq p < \frac{2(d+1)}{d+3}$ and assume that $b_j$ satisfies the following conditions;
\begin{enumerate}[(i)]
\item $\norm{b_j}_2 \leq C$ for all $j$.
\item ${\supp \widehat{b_j}} \subset \{ r: |r| \in [2^{j-2},2^{j+2}] \}$.
\item For $n,N \geq 0$, we have
\begin{equation}
|b_j^{(n)}(\lambda)| \leq C_{n,N} 2^{-jN} (1+2^j|\lambda|)^{-N}, \;\;\text{ if }\; |\lambda| \notin [1/8,8].
\end{equation}
\end{enumerate}Then we have
\begin{equation} \label{eqn:goal0}
\bnorm{\sum_{1< 2^j < \epsilon t} 2^{jd/2} {b_j}(A/t) S_t f_j }_{L^p(\Omega)} \les \Big(\sum_j 2^{jd} \norm{f_j}_{L^p(\Omega)}^p \Big)^{1/p},
\end{equation}
whenever the support of $f_j$ is contained in a fixed compact subset $\Omega_0$ of a coordinate patch $\Omega\subset M$.
\end{prop}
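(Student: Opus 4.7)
The plan is to reduce Proposition~\ref{prop:main} to a restricted weak-type inequality and then establish the latter via a Calder\'on--Zygmund style decomposition with a carefully tailored exceptional set, in the spirit of the radial arguments of \cite{HNS, LRS}. First, by real interpolation between Lorentz spaces and the linearity of each side of \eqref{eqn:goal0} in $f_j$, it suffices to consider atoms $f_j = \chi_{E_j}$ with $E_j \subset \Omega_0$ measurable and, after normalizing $\sum_j 2^{jd}|E_j| = 1$, to prove the measure estimate
$$\meas\Bigl\{x \in \Omega : \Bigl| \sum_{1 < 2^j < \epsilon t} 2^{jd/2} b_j(A/t) S_t \chi_{E_j}(x) \Bigr| > \alpha \Bigr\} \lesssim \alpha^{-p}$$
uniformly in $\alpha > 0$ and $t \ge 1$, denoting the operator on the left by $Tf$.

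\textbf{Kernel geometry and exceptional set.} The representation $b_j(A/t) = \frac{t}{2\pi}\int \wh{b_j}(tr) e^{irA}\,dr$, combined with the Fourier support condition (ii) on $b_j$, localizes the integration to $|r| \sim 2^j/t < \epsilon$. The parametrix from Section~\ref{sec:FIO} then shows that, modulo rapidly decaying error terms controlled via Lemma~\ref{lem:kernel}, the kernel of $b_j(A/t) S_t$ is essentially supported within distance $C\, 2^j/t$ of the diagonal of $\Omega \times \Omega$ and inherits a Fourier integral operator structure. Accordingly, one defines an exceptional set $\mathcal{E}$ as a union over $j$ of suitably weighted enlargements of $E_j$ at the natural scale $2^j/t$, so that a layer-cake argument together with the normalization yields $|\mathcal{E}| \lesssim \alpha^{-p}$. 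To capture the curved wavefront set determined by the phase $\varphi$ and the principal symbol $a(y,\xi)$, one performs a second dyadic decomposition of $\Sigma_x$ adapted to the Lee--Seeger scheme of \cite{LS}.

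\textbf{$L^2$ estimate off the exceptional set.} It remains to show $\|Tf\|_{L^2(\Omega \setminus \mathcal{E})}^2 \lesssim \alpha^{2-p}$, whence Chebyshev completes the weak-type bound. Expanding the square produces diagonal ($j = j'$) and off-diagonal ($j \neq j'$) contributions. Using $\|b_j\|_2 \lesssim 1$ from (i), the support conditions (ii),(iii), and the factorization $b_j = \beta_j * \check{\eta_j}$ allowed by (ii), Lemma~\ref{lem:basiclp} handles the diagonal bound after summing against the normalization $\sum_j 2^{jd}|E_j| = 1$ and using $2^j < \epsilon t$ to simplify the maximum. The off-diagonal terms, on the other hand, require quantitative cancellation between operators at distinct dyadic scales.

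\textbf{The main obstacle.} The technical heart of the argument is precisely this off-diagonal estimate: Fourier-support disjointness between $b_j$ and $b_{j'}$ provides too weak a form of orthogonality once one localizes to $\Omega \setminus \mathcal{E}$ rather than working globally on $M$. One must extract quantitative decay in $|j - j'|$ by exploiting the oscillation of the Fourier integral kernel coming from the phase $\varphi$, together with the non-vanishing Gaussian curvature of $\Sigma_x$ via stationary/non-stationary phase. This is the radial-case mechanism of \cite{LRS} transported to the curved, inhomogeneous setting through the parametrix; it is precisely why the second dyadic decomposition of $\Sigma_x$ and the refined exceptional set are needed, and it is what gets developed in Sections~\ref{sec:L1}--\ref{sec:res_weak}.
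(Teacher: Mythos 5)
Your outline correctly reproduces the architecture of the paper's argument (discretization, a density-based exceptional set, an $L^2$ estimate off the exceptional set split into diagonal and off-diagonal contributions, Chebyshev), so it is the same route rather than a different one. But as a proof it has a genuine gap at exactly the point you yourself flag as ``the main obstacle'': the quantitative off-diagonal bound. For the low-density part the paper needs the quasi-orthogonality estimate $|\inn{G_j,G_k}_{\Omega}|\les \lambda^p 2^{jd/2}2^{-k(d-1/2)}t^{-d}\#\E_j$ for $k<j-9$ (see \eqref{eqn:scalar}), and its proof occupies Sections \ref{sec:L1}--\ref{sec:res_weak}: one writes $b_j\overline{b_k}$ through the parametrix, performs the second dyadic decomposition into $2^{-j/2}$-caps, splits $\widehat{b_j}$ into blocks of length $2^k$, builds the exceptional sets $E^\theta_l(z,r)$ at the mixed scales $2^{(j+k)/2}/t\times\cdots\times 2^k/t$ \`a la Lee--Seeger, counts their intersection with the low-density sets, and proves the pointwise bound $|\inn{I^n_{j,k}f_z,f_{z'}}|\les t^{-d}2^{-j(d-1)/2}\norm{\widehat{b^n_j}}_2$ by stationary phase on $\Sigma_y$ in generalized polar coordinates. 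None of this is carried out, or even reduced to a precise intermediate claim, in your proposal; naming the tools that would be needed is not a proof of the estimate, and this estimate is the substance of the proposition.

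Two further structural points. First, your exceptional set and your $L^2$ claim are not consistent as stated: if the exceptional set is built from \emph{all} of $E_j$ enlarged to scale $2^j/t$, its measure is not $O(\alpha^{-p})$. The paper instead removes only the enlargements $Q^*$ of the \emph{high-density} cubes of side $2^j/t$ (those meeting more than $\lambda^p$ points of $\E_j$), bounds their total measure by $\lambda^{-p}\sum_j 2^{jd}t^{-d}\#\E_j$, controls the high-density contribution \emph{off} $\bigcup Q^*$ by the $L^1$ kernel-tail estimate of Lemma \ref{lem:error}, and runs the $L^2$ argument only on the remaining low-density part; the diagonal bound $\norm{G_j}_{L^2(\Omega)}^2\les\lambda^{2-p}t^{-d}\#\E_j$ uses the low-density property in an essential way and would fail for the full operator applied to all of $f$. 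Second, your interpolation scheme does not yield the case $p=1$, which the proposition asserts: the paper proves the $L^1$ bound directly from $\int_\Omega|b_j(A/t)(x,y)|\,dx\les 2^{jd/2}$, and even that step already requires the second dyadic decomposition and the Jacobian computation of Section \ref{sec:sec}.
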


From Proposition \ref{prop:main}, we may derive the following result.
\begin{prop} \label{thm:ma} Let $p$, $b_j$, and $f_j$ as in Proposition \ref{prop:main}. Then,
\begin{equation} \label{eqn:goalglobal}
\bnorm{\sum_{1< 2^j < \epsilon t} 2^{jd/2} {b_j}(A/t)f_j }_{L^p(M)} \les \Big(\sum_j 2^{jd} \norm{f_j}_{L^p(\Omega)}^p \Big)^{1/p}.
\end{equation}
Moreover, for $1<p< \frac{2(d+1)}{d+3}$ and $p \leq q \leq \infty$, we have
\begin{equation}\label{eqn:lorentzglobal}
\bnorm{\sum_{1< 2^j < \epsilon t} 2^{-jd(\frac{1}{p}-\frac{1}{2})} {b_j}(A/t) f_j}_{L^{p,q}(M)} \les \bnorm{\Big(\sum_j |f_j|^q \Big)^{1/q}}_{L^p(\Omega)}.
\end{equation}
\end{prop}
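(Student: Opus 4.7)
The plan is to deduce \eqref{eqn:goalglobal} from Proposition~\ref{prop:main} by absorbing the auxiliary operator $S_t$, and then to obtain \eqref{eqn:lorentzglobal} by real interpolation between the endpoints $q=p$ and $q=\infty$.

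For \eqref{eqn:goalglobal}, a finite partition of unity on $M$ reduces matters to the case $\supp f_j\subset\Omega_0$. I would then split $b_j(A/t)=(\tilde\eta b_j)(A/t)+((1-\tilde\eta)b_j)(A/t)$. By hypothesis~(iii) of Proposition~\ref{prop:main}, $(1-\tilde\eta)b_j$ is a symbol of arbitrarily high negative order with constant $O(2^{-jN})$, so Lemma~\ref{lem:symbol} gives an $L^p(M)$-operator norm of size $O(2^{-jN})$. The remaining term factors as $b_j(A/t)\tilde\eta(A/t)f_j=b_j(A/t)S_tf_j+b_j(A/t)R_tf_j$; the $R_t$-piece is negligible since the rapid decay $\norm{R_t}_{L^p\to L^p}\les t^{-N}$ dominates any polynomial-in-$(j,t)$ bound for $\norm{b_j(A/t)}_{L^p\to L^p}$ obtained from crude kernel estimates. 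For the principal term $\sum_j 2^{jd/2}b_j(A/t)S_tf_j$, Proposition~\ref{prop:main} handles the $L^p(\Omega)$-norm directly, while for $x\notin\Omega$ (and $y\in X\supset\supp S_tf_j$), Lemma~\ref{lem:kernel} combined with the Bernstein-type bound $\norm{b_j}_\infty\les 2^{j/2}\norm{b_j}_2$ yields $|b_j(A/t)(x,y)|\les 2^{-jN}$, and this contribution sums against the weight $2^{jd/2}$ to a harmless term by H\"older's inequality.

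For \eqref{eqn:lorentzglobal}, the case $q=p$ is equivalent to \eqref{eqn:goalglobal}: the substitution $f_j\mapsto 2^{jd/p}f_j$ converts the right-hand side $\bnorm{(\sum_j|f_j|^p)^{1/p}}_{L^p}=(\sum_j\norm{f_j}_p^p)^{1/p}$ into $(\sum_j 2^{jd}\norm{f_j}_p^p)^{1/p}$, while the left-hand side weight $2^{-j\alpha(p)}$ becomes $2^{-j\alpha(p)+jd/p}=2^{jd/2}$. At the opposite endpoint $q=\infty$ one needs the restricted weak-type bound
\[\bnorm{\sum_j 2^{-j\alpha(p)}b_j(A/t)f_j}_{L^{p,\infty}(M)}\les \bnorm{\sup_j|f_j|}_{L^p(\Omega)},\]
which does not follow formally from Proposition~\ref{prop:main}; rather it is precisely the restricted weak-type inequality to be established in Sections~\ref{sec:L1} and~\ref{sec:res_weak}. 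Once both endpoints are in hand, real interpolation applied to the linear map $\vec f\mapsto\sum_j 2^{-j\alpha(p)}b_j(A/t)f_j$, using $(L^p,L^{p,\infty})_{1-p/q,q}=L^{p,q}$ on the target side and the matching identity $(L^p(\ell^p),L^p(\ell^\infty))_{1-p/q,q}=L^p(\ell^q)$ on the source side, yields \eqref{eqn:lorentzglobal} for the entire range $p\leq q\leq\infty$.

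The hard part will be the $q=\infty$ endpoint: the other ingredients (kernel decay, symbolic calculus, vector-valued Fubini, real interpolation) are essentially bookkeeping, whereas the weak-type bound rests on the delicate restricted weak-type inequality that constitutes the technical core of the paper.
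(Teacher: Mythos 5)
Your proof of \eqref{eqn:goalglobal} is essentially the paper's: the splitting $b_j=\tilde\eta b_j+(1-\tilde\eta)b_j$ with Lemma \ref{lem:symbol} and hypothesis (iii) handling the second piece, the exchange of $S_t$ for $\tilde\eta(A/t)$ modulo the negligible $R_t$, and Lemma \ref{lem:kernel} controlling the output on $\Omega^c$ are exactly the steps carried out in Section \ref{sec:reduction}. No issues there.

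The gap is in your derivation of \eqref{eqn:lorentzglobal}. You correctly identify the case $q=p$ with \eqref{eqn:goalglobal}, but your second endpoint, $\norm{\sum_j 2^{-j\alpha(p)}b_j(A/t)f_j}_{L^{p,\infty}(M)}\les\norm{\sup_j|f_j|}_{L^p(\Omega)}$, is not established anywhere, and it is \emph{not} the restricted weak-type inequality \eqref{eqn:weak} of Sections \ref{sec:L1} and \ref{sec:res_weak}. That inequality is a restricted weak type $(p,p)$ bound with respect to the measure $\mu_d$ on $\N\times\Z_t^d$: its right-hand side $\lambda^{-p}\sum_j 2^{jd}t^{-d}\#\E_j$ is an $\ell^p$-type quantity in $j$, not a supremum, and its sole role is to produce --- after Marcinkiewicz interpolation with the $L^1$ bound --- the strong estimate \eqref{eqn:goal0}, i.e.\ precisely the $q=p$ case you already have. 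So your interpolation scheme has only one usable endpoint. Note also that a single weighted $\ell^p(L^p)\to L^p$ hypothesis cannot by itself imply the $\ell^q$ conclusion for $q>p$: taking $T_jf=\inn{f,\phi}\chi_{E_j}$ with disjoint sets $E_j$ of measure one, the $\ell^p$ bound holds uniformly, yet with $f_j=f$ for $j\le J$ one has $\norm{\sum_{j\le J}T_jf_j}_{L^{p,\infty}}\sim J^{1/p}$ while $\norm{\sup_j|f_j|}_{L^p}$ stays bounded. The paper closes this by a different mechanism: \eqref{eqn:goalglobal} holds for every exponent in $[1,\tfrac{2(d+1)}{d+3})$, hence at two exponents $p_0<p<p_1$, which after the renormalization $h_j=2^{jd/2}f_j$ give two bounds with \emph{different} weights $2^{j\alpha(p_0)}\neq 2^{j\alpha(p_1)}$; the interpolation lemma \cite[Lemma 2.4]{LRS} converts this pair of weighted $\ell^{p_\nu}(L^{p_\nu})\to L^{p_\nu}$ estimates into the $\ell^q\to L^{p,q}$ bound at the intermediate weight $2^{j\alpha(p)}$ for all $p\le q\le\infty$ (this is also why \eqref{eqn:lorentzglobal} is stated only for $p>1$). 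To repair your argument, either invoke that lemma or supply an independent proof of your $q=\infty$ endpoint.
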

\begin{proof}

To see \eqref{eqn:goalglobal}, we note that by Lemma \ref{lem:kernel},
\[
\norm{{b_j}(A/t) f}_{L^p(\Omega^c)} \les 2^{-jN} \norm{f}_{L^p(\Omega)},
\]
for $f$ compactly supported in $\Omega$. Since $S_t f$ is compactly supported in $\Omega$, we obtain
\begin{equation} \label{eqn:goalglobal1}
\bnorm{\sum_{1< 2^j < \epsilon t} 2^{jd/2} {b_j}(A/t) S_t f_j }_{L^p(\Omega^c)} \les \Big(\sum_j 2^{jd} \norm{f_j}_{L^p(\Omega)}^p \Big)^{1/p}
\end{equation}
by the triangle inequality and H\"{o}lder's inequality. This enables us to replace $\Omega$ in \eqref{eqn:goal0} by $M$. Next, we may harmlessly replace $S_t$ in \eqref{eqn:goal0} by $\tilde{\eta}(A/t) = S_t  + R_t$ since the $L^p$ operator norm of $R_t$ is $O(t^{-N})$. Thus, so far we have seen that \eqref{eqn:goal0} implies 
\begin{equation} \label{eqn:goalglobal2}
\bnorm{\sum_{1< 2^j < \epsilon t} 2^{jd/2} {b_j}\tilde{\eta}(A/t) f_j }_{L^p(M)} \les \Big(\sum_j 2^{jd} \norm{f_j}_{L^p(\Omega)}^p \Big)^{1/p},
\end{equation}
as $b_j(A/t)\tilde{\eta}(A/t) = b_j \tilde{\eta}(A/t)$. 

Note in addition that 
\begin{equation*}
\norm{{b_j}(1-\tilde{\eta})(A/t) f}_{L^p(M)} \les 2^{-jN} \norm{f}_{L^p(M)},
\end{equation*}
which follows from Lemma \ref{lem:symbol} and the fact that
\begin{equation} \label{eqn:outpo}
 | [b_j (1-\tilde{\eta}) ]^{(n)} (\lambda)| \leq C_{n,N} 2^{-jN} (1+2^j|\lambda|)^{-N}.
\end{equation}

Taking account of this, we have 
\begin{equation} \label{eqn:goalglobal3}
\bnorm{\sum_{1< 2^j < \epsilon t} 2^{jd/2} {b_j}(1-\tilde{\eta})(A/t) f_j }_{L^p(M)} \les \Big(\sum_j 2^{jd} \norm{f_j}_{L^p(\Omega)}^p \Big)^{1/p},
\end{equation}
which, together with \eqref{eqn:goalglobal2}, implies \eqref{eqn:goalglobal}.

The Lorentz space estimates \eqref{eqn:lorentzglobal} follows from \eqref{eqn:goalglobal} by an interpolation lemma \cite[Lemma 2.4]{LRS}.
\end{proof}

Next, we show that Proposition \ref{thm:ma} implies \eqref{eq:ulgo1}. Let $\eta_0$ be a smooth bump function supported in $\{\lambda : |\lambda| \in [1/6,6] \}$ which is $1$ on $\{\lambda : |\lambda| \in [1/4,4] \}$. We decompose $m_j$ as
\begin{equation}\label{eqn:decomp}
\widehat{m_j} = \widehat{m_j} \eta_j = 
\ft[m_j \eta_0] \eta_j + \ft[m_j (1-\eta_0)] \eta_j.
\end{equation}

If we let $h_j = [m_j (1-\eta_0)] * \check{\eta_j}$, then $h_j^{(n)}$ satisfies estimates similar to \eqref{eqn:outpo}, up to a multiplicative constant $\norm{m}_1$. Therefore, we have 
\[ \norm{ h_j(A/t)f }_{L^p(M)} \les 2^{-jN} \norm{m}_{L^1} \norm{f}_{L^p(M)},
\]
which reduces \eqref{eq:ulgo1} to the following estimate:
\begin{equation}\label{eq:ulgo2}
\norm{ \sum_j [(m_j \eta_0) *  \check{\eta_j}] (A/t) f}_{L^{p,q}(
M)} \les \norm{m}_{B^2_{\alpha(p),q} }\norm{ f}_{L^p(\Omega)}.
\end{equation}

Set $b_j = (m_j \eta_0) *  \check{\eta_j}/\norm{m_j}_2$ ($b_j=0$ when $\norm{m_j}_2=0$). Then $b_j$ satisfies the assumptions in Proposition \ref{prop:main} and \eqref{eq:ulgo2} follows by applying \eqref{eqn:lorentzglobal} with $f_j = 2^{j\alpha(p)}\norm{m_j}_2 f$. Thus, it only remains to prove Proposition \ref{prop:main} for the proof of Theorem \ref{thm:main}.

\subsection{Reduction to a restricted weak-type inequality}
Fix a relatively compact open set in $\R^d$ which we identify with $\Omega \subset M$. For each $z=(z_1,\cdots,z_d)\in  \Z_t^d := (\Z/t)^d$, set $q_z = X \cap \prod_{i=1}^d [z_i,z_i + 1/t)$. Let $f_{j,z}$ be a function such that $|f_{j,z}| \leq \chi_{q_z}$. Suppose that we have the following;
\begin{equation}\label{eqn:goal2}
\bnorm{\sum_j 2^{jd/2} \sum_{z\in \Z_t^d} \gamma_{j,z} {b_j}(A/t) f_{j,z} }_{L^p(\Omega)} \les  \Big(\sum_j 2^{jd} t^{-d} \sum_{z\in\Z_t^d} |\gamma_{j,z}|^p \Big)^{1/p},
\end{equation}
where the implicit constant is independent of the choice of $f_{j,z}$ and $\gamma_{j,z}$. 

We claim that Proposition \ref{prop:main} follows from \eqref{eqn:goal2}. To see this, define 
\begin{align*}
\gamma_{j,z} = \sup_{x\in q_z}|S_t f_j(x)|  \;\; \text{ and} \;\;
f_{j,z} = \gamma_{j,z}^{-1} \chi_{q_z} S_t f_j  \; \text{ if $\gamma_{j,z} \neq 0$,}
\end{align*}
and $f_{j,z}=0$ if $\gamma_{j,z}=0$. Since $\sum_{z\in \Z_t^d} \gamma_{j,z}f_{j,z} = S_t f_j$, it remains to observe that 
$\sum_{z\in \Z_t^d} |\gamma_{j,z}|^p \les t^d\norm{f_j}_p^p.$ Let $u_t = t^{-d}(1+t|\cdot|)^{-N}$. Then the observation follows from 
\begin{equation*}
|\gamma_{j,z}|^p \les |u_t*f_j(z)|^p \les t^d \int_{q_z}   |u_t*f_j(x)|^p dx,
\end{equation*}
since $|S_t(x,y)| \les u_t(z-y)$ if $x\in q_z$. 

We shall further reduce \eqref{eqn:goal2} to a restricted weak-type inequality. Let $\mu_d$ be the measure on $\N\times \Z_t^d$ given by 
\begin{equation*}
\mu_d (E) = \sum_{j \geq 1} 2^{jd}t^{-d} \#\{z:(j,z)\in E\},
\end{equation*}
and $T$ be the operator acting on functions on $\N \times \Z_t^d$ by
\begin{equation*}
T\gamma(x) = \sum_j 2^{jd/2} \sum_{z\in \Z_t^d} \gamma_{j,z} {b_j} (A/t)  f_{j,z},
\end{equation*}
for a fixed collection of $f_{j,z}$ satisfying $|f_{j,z}| \leq \chi_{q_z}$. Then \eqref{eqn:goal2} is equivalent to
\begin{equation}\label{eqn:goal3}
\norm{T\gamma}_{L^p(\Omega)} \les \norm{\gamma}_{L^p(\mu_d)}.
\end{equation}

We finish this subsection with an outline of the remaining proof. We first show \eqref{eqn:goal3} for $p=1$ in Section \ref{sec:L1}. After that, we prove the following restricted weak-type inequality in Section \ref{sec:res_weak}; for $1<p<\frac{2(d+1)}{d+3}$ and $\lambda>0$,
\begin{equation} \label{eqn:weak}
\meas \{x\in \Omega: \babs{\sum_j 2^{jd/2} \sum_{z\in \E_j} {{b_j}}(A/t) f_{j,z}}>\lambda \} \les \lambda^{-p} \sum_{j} 2^{jd}t^{-d} \# \E_j,
\end{equation}
where $\E_j$ is a finite subset of $\Z_t^d$. Then, \eqref{eqn:goal3} is verified for $1\leq p < \frac{2(d+1)}{d+3}$ by real interpolation. Here, we may assume that $\lambda>C$ for some large constant $C$ by the $L^1$ estimate.

\section{$L^1$ and pointwise estimates}\label{sec:L1}
In this section, we begin the proof of \eqref{eqn:goal3} for $p=1$. This involves certain pointwise estimates which will be used again for the proof of \eqref{eqn:weak}.
\subsection{Error estimates} \label{sec:error}
For the proof of \eqref{eqn:goal3} for $p=1$, by the triangle inequality, it suffices to show that for any $y\in X$,
\begin{equation}\label{eqn:goalL10}
\int_{\Omega} |{b_j}(A/t)(x,y)| dx \les 2^{jd/2}.
\end{equation}

Note that $\norm{{b_j}}_{L^\infty} \leq \norm{\widehat{b_j}}_{L^1} \les 2^{j/2}$ by the Cauchy-Schwarz inequality and the $L^2$ normalization of $b_j$. Let $\zeta_j(x,y)$ be the characteristic function of $\{ (x,y)\in \Omega \times \Omega: |x-y| \leq C 2^j/t \}$, where $C$ is a constant as in Lemma \ref{lem:kernel}. Then Lemma \ref{lem:kernel} gives the following pointwise estimate:
\begin{equation}\label{eqn:kernel}
|{b_j}(A/t)(x,y)|(1-\zeta_j(x,y)) \les 2^{-jN} \left(\frac{1}{|x-y|^{d-1}} + \frac{2^j/t}{|x-y|^{d+1}} \right).
\end{equation}
Therefore, for any $y\in X$, we have
\begin{equation*}
\int_{\Omega} |{b_j}(A/t)(x,y)| (1-\zeta_j(x,y)) dx \les 2^{-jN}.
\end{equation*}

It suffices to consider the case $|x-y|\leq C 2^j/t$. Recall that
\begin{equation}\label{eqn:ker}
{b_j}(A/t)(x,y) = \iint \widehat{b_j}(r)q(r/t,x,y,\xi)e^{ira(y,\xi)/t} dr e^{i\varphi(x,y,\xi)} d\xi + O(2^{-jN}),
\end{equation}
by Lemma \ref{lem:kernel}. The error term $O(2^{-jN})$ is negligible for the $L^1$ estimate.

We would like to isolate a further error term as in \cite{CS}. Let $\tilde{\eta}$ be a smooth function supported on $\{\xi:(2C)^{-1} \leq |\xi| \leq 2C \}$ which is $1$ on $\{ \xi:C^{-1} \leq |\xi| \leq C \}$ for a large fixed constant $C=16 C_0$; we take a constant $C_0$ such that $C_0^{-1} |\xi| \leq |a(y,\xi)| \leq C_0|\xi|$ for all $y\in M$. We insert $\zeta_j(x,y)\tilde{\eta}(\xi/t)$ and $\zeta_j(x,y)(1-\tilde{\eta}(\xi/t))$ in the double integral in \eqref{eqn:ker} and then call the new integrals as $I$ and $II$, respectively. 

We claim that $II=O(2^{-jN} t^d)$, which is acceptable for the $L^1$ estimate since \[ \meas\{x:|x-y|\les 2^j/t\} = O(2^{jd}/t^d).\] Let $\check{q}$ denotes the Fourier inverse transform of the symbol $q$ in $r$-variable. Observe that 
\begin{equation*}
|[b_j * t \check{q}(t\cdot)] (\lambda)| \les \int \frac{|b_j(\tau)|t d\tau}{(1+t|\lambda-\tau|)^{2N}} \les 2^{-jN}(1+2^j|\lambda|)^{-N}
\end{equation*}
if $|\lambda| \notin [1/16,16]$. This follows by considering the cases $|\tau| \in [1/8,8]$ and $|\tau| \notin [1/8,8]$ separately. Therefore, we may bound $|II|$ by a constant times
\begin{equation*}
\int_{|\xi| \notin [C^{-1} t,Ct]} |[b_j * t \check{q}(t\cdot)] (a(y,\xi)/t) | d\xi \les \int \frac{2^{-jN}}{(1+2^j|a(y,\xi)/t|)^N} d\xi\les 2^{-jN}t^d
\end{equation*}
as claimed.

Define
\[Q_j(r,t,x,y) = \zeta_j(x,y) \int e^{it \Phi(r,x,y,\xi) } q(r,x,y,t\xi)\tilde{\eta}(\xi) d\xi, \]
where \[ \Phi(r,x,y,\xi)  = \varphi(x,y,\xi) + r a(y,\xi). \] 
Then the main term $I:=I_j$ can be written as (after the change of variables $\xi \to t\xi$ and $r\to tr$) 
\begin{equation} \label{eqn:reductionI}
I_j(x,y) = t^d \int Q_j(r,t,x,y) \widehat{b_j}(tr) t dr.
\end{equation}

Let us summarize the reductions we have done so far. 
\begin{lem} \label{lem:reduction}
Let $x,y \in \Omega$. Then 
\begin{align*}
b_j(A/t)(x,y) = I_j(x,y) + E_j(x,y),
\end{align*}
where the $L^1(\Omega)$ operator norm of $E_j$ is $O(2^{-jN})$.
\end{lem}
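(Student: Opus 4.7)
The proof is essentially a bookkeeping exercise that assembles the pointwise and $L^1$ estimates already derived in this subsection. First I would start from the Fourier inversion identity $b_j(A/t)(x,y) = \int \widehat{b_j}(tr)\tfrac{t}{2\pi} e^{irA}(x,y)\,dr$ and substitute the Lax--H\"ormander parametrix $e^{irA}(x,y) = S_r(x,y) + E_r(x,y)$; since $\supp \widehat{b_j} \subset \{|r| \in [2^{j-2},2^{j+2}]\}$ and $2^j < \epsilon t$, the relevant $r$ satisfy $|r/t| \lesssim \epsilon$, so the parametrix is valid throughout. Lemma \ref{lem:kernel} applied to $b_j$ itself gives an $O(2^{-jN})$ pointwise bound for the contribution of the error kernel $E_r$ on $\Omega \times \Omega$, hence a contribution of $O(2^{-jN})$ to $\int_\Omega |E_j(x,y)|\,dx$ since $\Omega$ has finite measure. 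This recovers the $O(2^{-jN})$ term already recorded in \eqref{eqn:ker}.

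Next I would decompose the remaining oscillatory kernel by $\zeta_j(x,y) + (1-\zeta_j(x,y))$. The far-range piece is controlled by \eqref{eqn:kernel}: integrating the bound $2^{-jN}\bigl(|x-y|^{-(d-1)} + (2^j/t)|x-y|^{-(d+1)}\bigr)$ over $\{x\in\Omega : |x-y| \gtrsim 2^j/t\}$ yields $O(2^{-jN})$, using the local integrability of $|x-y|^{-(d-1)}$ near $y$ and the tail estimate $\int_{|x-y|\gtrsim 2^j/t}|x-y|^{-(d+1)}\,dx \lesssim t/2^j$.

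For the close-range piece, the further splitting of the $\xi$-integral in \eqref{eqn:ker} via $\tilde{\eta}(\xi/t) + (1-\tilde{\eta}(\xi/t))$ separates the main term from an extra error $II$. The $\tilde{\eta}(\xi/t)$ part becomes precisely $I_j$ of \eqref{eqn:reductionI} after the changes of variables $\xi\mapsto t\xi$ and $r\mapsto tr$. The complementary part $II$ is pointwise $O(2^{-jN}t^d)$ by the off-support decay of $b_j * t\check{q}(t\cdot)$ already shown in the excerpt (a soft Fourier-size estimate on the $r$-integral); since $\zeta_j(\cdot,y)$ is supported on a set of measure $O((2^j/t)^d)$, this contributes $O(2^{-jN} 2^{jd})$ to the $L^1(\Omega)$ operator norm, which is acceptable after relabelling $N$. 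Summing the three error contributions gives $\int_\Omega |E_j(x,y)|\,dx \lesssim 2^{-jN}$ uniformly in $y\in X$, as claimed. The only subtlety is bookkeeping: tracking that the losses of $t^d$ or $2^{jd}$ incurred in the close-range estimate are comfortably absorbed by the freedom to take $N$ arbitrarily large in Lemma \ref{lem:kernel}.
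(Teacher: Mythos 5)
Your proposal is correct and follows essentially the same route as the paper: the error term $E_j$ is assembled from exactly the same three sources (the parametrix error $E_r$ controlled by Lemma \ref{lem:kernel}, the far-range piece $(1-\zeta_j)$ controlled by \eqref{eqn:kernel}, and the $\xi$-cutoff remainder $II$ bounded pointwise by $O(2^{-jN}t^d)$ on a set of measure $O(2^{jd}/t^d)$), with the main term becoming $I_j$ after the rescalings $\xi\mapsto t\xi$, $r\mapsto tr$. The bookkeeping of the $2^{jd}$ and $t^d$ losses being absorbed by the arbitrarily large $N$ is also how the paper closes the argument.
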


We remark that, more generally, $L^p(\Omega)$ operator norm of $E_j$ is $O(2^{-jN})$ for $1 \leq p\leq \infty$. Our next goal is to show that for $|r| \sim 2^j/t$ and $y\in X$,
\begin{equation} \label{eqn:L1}
\norm{Q_j(r,t,\cdot,y)}_{L^1(\Omega)} \les 2^{j(d-1)/2}t^{-d},
\end{equation}
which would immediately imply that 
\[ \int_{\Omega} |I_j(x,y)| dx \les 2^{j(d-1)/2} \norm{\widehat{b_j}}_{1}\  \les 2^{jd/2}, \]
giving \eqref{eqn:goalL10}.

\subsection{Proof of \eqref{eqn:L1}} \label{sec:sec}
The proof follows from a standard technique called `second dyadic decomposition' (see \cite{CS,SSS,Stein,Tao}). We give details, closely following a variant used by Tao \cite{Tao}.

First, we may assume, by using a partition of unity and rotation, that $\tilde{\eta}$ is supported in a narrow cone $\{\xi=(\xi',\xi_d): |\xi'| \leq c\xi_d, \;\xi_d \sim 1\}$. Let $\lambda=\xi_d$ and $\lambda\omega =\xi '$, so that $\xi = \lambda(\omega,1)$.

Partition $\omega$ variable into $O(2^{j(d-1)/2})$ many disks $D$ of radius $2^{-j/2}$ centered at $\omega_D$. Let $q_D(r,t,x,y,\xi)$ be the symbol $q(r,x,y,t\xi) \tilde{\eta}(\xi)$ smoothly cut off to the tabular region $\{ \xi: \omega \in D, \lambda \sim 1 \}$ and let
\begin{align*}\label{eqn:Qd}
Q_{j,D}(r,t,x,y) = \zeta_j(x,y) \int e^{it \Phi(r,x,y,\xi) } q_D(r,t,x,y,\xi) d\xi,
\end{align*}
so that $Q_j(r,t,x,y) = \sum_D Q_{j,D}(r,t,x,y)$.

To prove \eqref{eqn:L1}, it suffices to show that for $|r|\sim 2^j/t$,
\begin{equation} \label{eqn:L12}
\norm{Q_{j,D}(r,t,\cdot,y)}_{L^1(\Omega)}\les t^{-d}.
\end{equation}

With an abuse of notation, we shall write 
$\Phi(r,x,y,\omega)$ for $\Phi(r,x,y,(\omega,1))$. To prove \ref{eqn:L12}, we need the following lemma.
\begin{lem} \label{lem:ptbound} $Q_{j,D}(r,t,x,y)$ satisfies the pointwise estimate 
\begin{equation}\label{eqn:ptbound}
\begin{split} &|Q_{j,D}(r,t,x,y)| \les  \\
&2^{-j(d-1)/2}(1+t|\Phi(r,x,y,\omega_D)|)^{-N} (1+2^{-j/2}t|\nabla_{\omega} \Phi(r,x,y,\omega_D) |)^{-N}.
\end{split}
\end{equation}
\end{lem}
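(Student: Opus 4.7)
The proof is a standard non-stationary phase / second dyadic decomposition argument in the spirit of \cite{SSS, Stein, Tao}. First I would exploit the homogeneity of degree one of $\varphi$ and $a$ in $\xi$ to write $\Phi(r,x,y,\xi) = \lambda\Psi(\omega)$ where $\xi = \lambda(\omega,1)$ and $\Psi(\omega) := \Phi(r,x,y,(\omega,1))$, so
\[
Q_{j,D}(r,t,x,y) = \zeta_j(x,y)\int_{\lambda\sim 1}\int_D e^{it\lambda\Psi(\omega)}\,\tilde q(r,t,x,y,\lambda,\omega)\,d\omega\,d\lambda,
\]
where $\tilde q$ absorbs the Jacobian $\lambda^{d-1}$ together with the cutoff, and $D\subset\R^{d-1}$ is a disk of radius $2^{-j/2}$ about $\omega_D$, of measure $\lesssim 2^{-j(d-1)/2}$. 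Since $q$ is a symbol of order zero evaluated at $|t\xi|\sim t$, both $\lambda$-derivatives of $\tilde q$ and $\omega$-derivatives of its ``symbol part'' (i.e.\ all derivatives not landing on the $D$-cutoff) are $O(1)$. The crude estimate $|\tilde q|\lesssim 1$ already contributes the prefactor $2^{-j(d-1)/2}$.

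The two gain factors come from integrating by parts in two directions. For the first, I would iterate the operator $L_1 = (it\Psi(\omega))^{-1}\partial_\lambda$; each application of $L_1^*$ only touches $\lambda$-derivatives of $\tilde q$, which are $O(1)$, so $N$ iterations yield the gain $(1+t|\Psi(\omega)|)^{-N}$. For the second, I would integrate by parts along $\nabla_\omega\Psi$ via $L_2 = (it\lambda|\nabla_\omega\Psi|^2)^{-1}\nabla_\omega\Psi\cdot\nabla_\omega$; each application of $L_2^*$ can land either on the $D$-cutoff (whose $\omega$-derivatives cost $2^{j/2}$ per step, since $q_D$ varies in $\omega$ on scale $2^{-j/2}$), on bounded $\omega$-derivatives of the symbol part of $\tilde q$, or on the Hessian-type terms produced by differentiating the coefficients of $L_2$. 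In every case the net gain per step is $2^{j/2}(t|\nabla_\omega\Psi|)^{-1}$, giving the second factor $(1+2^{-j/2}t|\nabla_\omega\Psi(\omega)|)^{-N}$. Applying $L_1^*$ first and then $L_2^*$ (in either order, via standard bookkeeping), the two gains combine multiplicatively.

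Finally I would transfer the gain factors from $\omega$ to $\omega_D$. Since $|\omega-\omega_D|\lesssim 2^{-j/2}$ and $\Psi$ is smooth in a compact region, Taylor expansion yields $\nabla_\omega\Psi(\omega) = \nabla_\omega\Psi(\omega_D)+O(2^{-j/2})$ and $\Psi(\omega) = \Psi(\omega_D) + O(2^{-j/2}|\nabla_\omega\Psi(\omega_D)|) + O(2^{-j})$, and both errors are absorbed (up to a multiplicative constant) by the corresponding gain factors evaluated at $\omega_D$. The main technical obstacle is controlling $L_2^{*N}$: the potentially singular factor $1/|\nabla_\omega\Psi|^2$ in its coefficients must not defeat the gain. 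This is handled by invoking the usual dichotomy: on the set where $|\nabla_\omega\Psi|\gtrsim 2^{j/2}/t$ the integration by parts is genuinely beneficial and the coefficients of $L_2^*$ are well behaved, while on its complement the second factor in \eqref{eqn:ptbound} is comparable to $1$ and the trivial bound suffices.
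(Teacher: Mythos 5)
Your overall strategy (second dyadic decomposition plus non-stationary phase in $\lambda$ and in $\omega$) is the right one and is close in spirit to the paper's argument, but two steps are genuinely incomplete as written. First, everything hinges on an estimate you never state: on the support of $\zeta_j$ and for $|r|\sim 2^j/t$ one has $\partial_\omega^\alpha\Phi(r,x,y,\cdot)=O(|x-y|^2)+O(r)=O(2^j/t)$ for $|\alpha|\geq 2$, because the Hessian of the linear part $\inn{x-y,\xi}$ of $\varphi$ vanishes. With only the generic bound $\nabla_\omega^2\Psi=O(1)$ that ``smooth in a compact region'' provides, your argument breaks in two places: (a) the term of $L_2^*$ in which the derivative hits the coefficient $\nabla_\omega\Psi/|\nabla_\omega\Psi|^2$ contributes $|\nabla_\omega^2\Psi|/(t|\nabla_\omega\Psi|^2)$ per step, and on your ``good'' set $|\nabla_\omega\Psi|\gtrsim 2^{j/2}/t$ this is dominated by the claimed gain $2^{j/2}/(t|\nabla_\omega\Psi|)$ only if $|\nabla_\omega^2\Psi|=O(2^j/t)$; (b) your transfer step $\nabla_\omega\Psi(\omega)=\nabla_\omega\Psi(\omega_D)+O(2^{-j/2})$ perturbs the quantity $2^{-j/2}t|\nabla_\omega\Psi|$ by $O(2^{-j}t)$, which is unbounded when $2^j\ll t$; the correct error $O(2^{-j/2}\cdot 2^j/t)=O(2^{j/2}/t)$ again needs the refined Hessian bound.

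Second, the assertion that the two gains ``combine multiplicatively'' by applying $L_1^*$ and then $L_2^*$ is not mere bookkeeping: after $N$ applications of $L_1^*$ the amplitude carries a factor $\Psi^{-N}$, and when $L_2^*$ differentiates it the step gains only $1/(t|\Psi|)$ rather than $2^{j/2}/(t|\nabla_\omega\Psi|)$, which is acceptable only when $|\Psi|\gtrsim 2^{-j/2}|\nabla_\omega\Psi|$. The clean fix --- and what the paper does --- is to observe that the product of the two factors in \eqref{eqn:ptbound} is dominated by the larger of $(1+t|\Phi(\omega_D)|)^{-2N}$ and $(1+2^{-j/2}t|\nabla_{\omega}\Phi(\omega_D)|)^{-2N}$, so one only ever integrates by parts in a single variable, chosen according to which of $|\Phi(\omega_D)|$ and $2^{-j/2}|\nabla_\omega\Phi(\omega_D)|$ is larger. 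The paper also first Taylor-expands the phase at $\omega_D$ and absorbs $e^{it\lambda E_D}$ into the amplitude (using the same $O(2^j/t)$ bound to verify $tE_D=O(1)$ and $t\lambda\partial_\omega^\alpha E_D=O(2^{j|\alpha|/2})$), which makes the phase exactly linear in $\omega$; the integration by parts then has constant coefficients, the singular factor $|\nabla_\omega\Psi|^{-2}$ never appears, and the decay comes out directly at $\omega_D$ with no transfer step. I recommend restructuring your argument along these lines.
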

Given Lemma \ref{lem:ptbound}, \eqref{eqn:L12} follows from the fact that $|Q_{j,D}(r,t,x,y)|$ rapidly decays, as a function of $x$, away from the ``plate"
\begin{equation*}\label{eqn:set}
P_{j,D}(r,y) = \{ x: |\Phi(r,x,y,\omega_D)| \les  1/t, \;|\nabla_{\omega} \Phi(r,x,y,\omega_D) | \les 2^{j/2}/t \}
\end{equation*}
of measure $O(2^{j(d-1)/2} t^{-d})$. We defer the detail for the moment and proceed the proof of Lemma \ref{lem:ptbound}.
\begin{proof}[Proof of Lemma \ref{lem:ptbound}]
Applying Taylor's expansion, we get
\begin{equation*}
\Phi(r,x,y,\omega) = \Phi(r,x,y,\omega_D) + \nabla \Phi(r,x,y,\omega_D)\cdot (\omega-\omega_D) + E_D(r,x,y,\omega),
\end{equation*}
where (with regarding $\nabla_{\omega}^2 \Phi$ as a quadratic form)
\begin{equation*}
E_D(r,x,y,w) = \frac{1}{2} \nabla_{\omega}^2 \Phi(\omega-\omega_D,\omega-\omega_D) + \text{high order terms}.
\end{equation*}

Next, we note that 
\begin{equation}\label{eqn:symbolbound}
\partial_{\lambda}^n q_D = O(1) \text{  and  } \partial_{\omega}^\alpha q_D = O(2^{j|\alpha|/2}). 
\end{equation}
We claim that the error term $e^{it\lambda E_D}$ can be harmlessly absorbed into the symbol $q_D$; $\tilde{q_D}(r,t,x,y,\lambda,\omega) = e^{it\lambda E_D}q_D$ satisfies bounds similar to those for $q_D$ in \eqref{eqn:symbolbound}. This follows from the fact that estimates similar to \eqref{eqn:symbolbound} hold for $e^{it\lambda E_D}$. To see this, it suffices to show that
\begin{equation}\label{eqn:errorbound}
tE_D(r,x,y,w) = O(1) \text{  and  } t\lambda \partial_{\omega}^\alpha E_D(r,x,y,w) = O(2^{j|\alpha|/2}). 
\end{equation}
Observe that for any multi-index $\alpha$, 
\begin{equation*}
\partial_{\omega}^{\alpha} \Phi = O(|x-y|^2) + O(r) = O(2^j/t),
\end{equation*}
given that $|\xi| \sim 1$ and $|x-y|\les 2^j/t \leq \epsilon$.
This, together with $|\omega-\omega_D| = O (2^{-j/2})$, shows that $tE_D(r,x,y,w) = O(1)$. Next, observe that each $\partial_{\omega}$ derivative removes the gain of $2^{-j/2}$ coming from the factor $\omega-\omega_D$. This gives \eqref{eqn:errorbound}. 
 
We rewrite $Q_{j,D}(r,t,x,y)$ as the product of $\zeta_j(x,y)$ and
\begin{equation}\label{eqn:neat}
\int_{\lambda \sim 1} \int_D e^{it\lambda [\Phi(r,x,y,\omega_D)+ \nabla_{\omega} \Phi(r,x,y,\omega_D)\cdot (\omega-\omega_D) ]} \tilde{q_D}(x,y,\lambda,\omega) d\omega \lambda^{n-1} d\lambda.
\end{equation}
Assume first that $2^{-j/2} |\nabla_{\omega} \Phi(r,x,y,\omega_D)| \geq |\Phi(r,x,y,\omega_D)|/2$. In this case, we integrate by parts in $\omega$-variable to obtain
\begin{equation*}
|Q_{j,D}(r,t,x,y)| \les 2^{-j(d-1)/2} (1+2^{-j/2}t |\nabla_{\omega} \Phi(r,x,y,\omega_D)|)^{-2N},
\end{equation*}
which implies \eqref{eqn:ptbound}. The factor $2^{-j(d-1)/2}$ comes from the measure of the disk $D$.

Next, assume that $|\Phi(r,x,y,\omega_D)|/2 \geq 2^{-j/2} |\nabla_{\omega} \Phi(r,x,y,\omega_D)|$. From this we get $|\Phi(r,x,y,\omega_D)| \geq 2 |\nabla_{\omega} \Phi(r,x,y,\omega_D) \cdot (\omega-\omega_D)|$. Integration by parts in $\lambda$-variable gives 
\begin{equation*}
|Q_{j,D}(r,t,x,y)| \les 2^{-j(d-1)/2} (1+t |\Phi(r,x,y,\omega_D)|)^{-2N},
\end{equation*}
which again implies \eqref{eqn:ptbound}. This completes the proof.
\end{proof}

Finally, in order to obtain \eqref{eqn:L12}, we still need to verify
\begin{equation*}
\int (1+t|\Phi(r,x,y,\omega_D)|)^{-N} (1+2^{-j/2}t|\nabla_{\omega} \Phi(r,x,y,\omega_D) |)^{-N} dx \les 2^{j(d-1)/2} t^{-d}.
\end{equation*}
For a fixed $y$, set $G_i(x) = \partial_{\xi_i} \Phi(r,x,y,\omega_D)$ for $1\leq i \leq (d-1)$, and $G_d(x) = \Phi(r,x,y,\omega_D)$. 
Let $I(z) = (1+t|z_n|)^{-N} (1+2^{-j/2}t|z'|)^{-N}$. Then we have,
\begin{equation*}
2^{j(d-1)/2} t^{-d} \sim \int I(z) dz = \int I(G(x)) |\det G'(x)| dx.
\end{equation*}
Therefore, it remains to show that $|\det G'(x)| \sim 1$.

Recall that by Euler's homogeneity relation, \[G_d(x) = \partial_{\xi_d} \Phi(r,x,y,(\omega_D,1)) + \omega_D \cdot \nabla_{\xi'} \Phi(r,x,y,(\omega_D,1)).\] Form this and \begin{equation*}
\frac{\partial^2}{\partial{x_i}\partial{\xi_j}}  \big( \inn{x-y,\xi} + O(|x-y|^2|\xi|) \big) = \delta_{i,j} + O(\epsilon),
\end{equation*}
we deduce that 
\begin{equation*}
\det G'(x) = \det \left( \frac{\partial^2 \Phi}{\partial{x_i}\partial{\xi_j}} \right) = \det \big( I+ O(\epsilon) \big) \sim 1.
\end{equation*}

\section{Proof of the restricted weak-type inequality \eqref{eqn:weak}} \label{sec:res_weak}
In this section, we finish the proof of Theorem \ref{thm:main} by establishing the restricted weak-type inequality \eqref{eqn:weak}. 
\subsection{Density decomposition}
Let $\mathcal{Q}_j$ be a collection of essentially disjoint cubes of side length $2^j/t$ which cover $\R^d$. Let $\mathcal{Q}_j(\lambda)$ be the collection of all $Q\in \mathcal{Q}_j$ such that $\#\E_j \cap Q > \lambda^p$. Then we decompose $\E_j$ as a union of $\E_j(\lambda)$ and $\bigcup_{Q\in \mathcal{Q}_j(\lambda)} \E_j\cap Q ,$ where $\E_j(\lambda) = \E_j \setminus \bigcup_{Q\in \mathcal{Q}_j(\lambda)} Q$ is the low density part of $\E_j$ satisfying $\E_j(\lambda) \cap Q \leq \lambda^p$ for any $Q\in \mathcal{Q}_j$.

We first deal with the high density part. We claim that
\begin{equation} \label{eqn:weakhigh}
\meas \{x\in \Omega: \babs{ \sum_j 2^{jd/2} \sum_{Q\in \mathcal{Q}_j(\lambda)} {b_j}(A/t) f_{j,Q} } >\lambda \} \les \lambda^{-p} \sum_{j} 2^{jd}t^{-d} \# \E_j,
\end{equation}
where $f_{j,Q} = \sum_{z\in \E_j\cap Q} f_{j,z}$. 

Let $Q^*$ be the cube with the same center as $Q$ but the side length is $5C 2^j/t$, where the constant $C\geq 1$ is chosen as in \eqref{eqn:kernel}. In particular, this ensures that ${b_j}(A/t) f_{j,Q}$ is ``essentially supported" on $Q^*$. Let $E= \bigcup_{j\geq 1} \bigcup_{Q\in \mathcal{Q}_j(\lambda)} Q^*$. Then
\begin{equation*}
|E| \les \sum_j \sum_{Q\in \mathcal{Q}_j(\lambda)} |Q| \leq \sum_j \sum_{Q\in \mathcal{Q}_j(\lambda)} 2^{jd}t^{-d} \lambda^{-p} \# \E_j \cap Q \leq \lambda^{-p} \sum_j 2^{jd}t^{-d} \# \E_j.
\end{equation*}

Therefore, \eqref{eqn:weakhigh} follows from
\begin{equation} \label{eqn:weakhigh2}
\lambda^{-1} \sum_j 2^{jd/2} \bnorm{\sum_{Q\in \mathcal{Q}_j(\lambda)} [{b_j}(A/t) f_{j,Q}]\chi_{(Q^*)^c} }_{L^1(\Omega)} \les \lambda^{-p} \sum_{j} 2^{jd}t^{-d} \# \E_j.
\end{equation}
We remark that the $j$-sum is over $2^{dj} > \lambda^p$ since $\mathcal{Q}_j(\lambda)$ is empty if $2^{dj} \leq \lambda^p$. Therefore, the following lemma implies \eqref{eqn:weakhigh2}.

\begin{lem}\label{lem:error} Let $\mathcal{Q}_j$ be a collection of cubes in $\mathcal{Q}_j$. Then for $1 \leq p\leq \infty$,
\begin{equation*}
\bnorm{ \sum_{Q\in \mathcal{Q}_j} [{b_j}(A/t) f_{j,Q}] \chi_{(Q^*)^c} }_{L^p(\Omega)} \les 2^{-jN} (t^{-d}\# \E_j )^{1/p}.
\end{equation*}
\end{lem}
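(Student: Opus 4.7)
The plan is to prove the inequality at the endpoints $p=1$ and $p=\infty$ and then interpolate. Both endpoints rest on the off-diagonal kernel bound \eqref{eqn:kernel}: the enlargement factor $5C$ in the definition of $Q^*$ guarantees that whenever $x\in (Q^*)^c$ and $y\in Q$ we have $|x-y|\geq 2C\cdot 2^j/t$, placing us well inside the regime where \eqref{eqn:kernel} holds and supplying the decisive factor $2^{-jN}$.

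For $p=1$, I would apply the triangle inequality, Fubini, and \eqref{eqn:kernel} to bound the norm by
\begin{equation*}
2^{-jN}\sum_{Q\in\mathcal{Q}_j}\int_Q |f_{j,Q}(y)|\bigg(\int_{(Q^*)^c}\Big(\frac{1}{|x-y|^{d-1}}+\frac{2^j/t}{|x-y|^{d+1}}\Big)dx\bigg)dy.
\end{equation*}
The inner integral, taken over $|x-y|\gtrsim 2^j/t$ in the bounded set $\Omega$, is $O(1)$ by a direct polar-coordinate computation. Combined with $\|f_{j,Q}\|_1\leq t^{-d}\#(\E_j\cap Q)$ and $\sum_Q \#(\E_j\cap Q)\leq \#\E_j$, which follows from the essential disjointness of $\mathcal{Q}_j$, this delivers $2^{-jN}t^{-d}\#\E_j$.

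For $p=\infty$, I would fix $x$ and estimate $\sum_Q|g_Q(x)|$ pointwise, where $g_Q=[b_j(A/t)f_{j,Q}]\chi_{(Q^*)^c}$. Invoking \eqref{eqn:kernel} term-by-term and interchanging the summation with the $y$-integration, the key combinatorial input is that every $y\in\Omega$ lies in at most one $Q\in\mathcal{Q}_j$ and that the boxes $q_z$ partition this $Q$, so $\sum_{Q}|f_{j,Q}(y)|\leq 1$ uniformly in $y$. The indicator $\chi_{(Q^*)^c}(x)$ automatically restricts the effective $y$-domain to $|x-y|\gtrsim 2^j/t$, and the same $O(1)$ integral bound as in the $L^1$ step yields $\sum_Q|g_Q(x)|\lesssim 2^{-jN}$ uniformly in $x$.

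The intermediate range $1<p<\infty$ then follows from the log-convexity bound $\|\cdot\|_p\leq \|\cdot\|_1^{1/p}\|\cdot\|_\infty^{1-1/p}$. I expect the main technical subtlety to be the bookkeeping at $p=\infty$: one needs to carefully exploit the essential disjointness of both the cubes $\mathcal{Q}_j$ and the boxes $q_z$ to justify $\sum_Q|f_{j,Q}(y)|\leq 1$, and verify that the enlargement factor $5C$ is large enough for $\chi_{(Q^*)^c}$ to confine the $y$-integration to the regime where the kernel estimate \eqref{eqn:kernel} is valid. Everything else reduces to the routine polar-coordinate computation on the bounded manifold.
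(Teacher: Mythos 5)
Your proof is correct and follows essentially the same route as the paper: the off-diagonal kernel bound \eqref{eqn:kernel} (valid since $x\notin Q^*$ forces $|x-y|\gtrsim C2^j/t$ for $y\in Q$), the disjointness of the $q_z$ and essential disjointness of the cubes to control $\sum_Q\|f_{j,Q}\|_1$ and $\sum_Q|f_{j,Q}(y)|$, the $O(1)$ bound for the integral of $|x-y|^{1-d}+(2^j/t)|x-y|^{-d-1}$ over $|x-y|\gtrsim 2^j/t$ in the bounded set $\Omega$, and interpolation between $p=1$ and $p=\infty$. No gaps.
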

\begin{proof}
If $x\in \Omega\setminus Q^*$, then 
\begin{align*}
&|{b_j}(A/t) f_{j,Q}(x)|  = \abs{\int {b_j}(A/t)(x,y) f_{j,Q} (y) dy}  \\
&\les 2^{-jN} \int_{ \{ y\in \Omega: |x-y| \geq C 2^j/t \}}  \left(\frac{1}{|x-y|^{d-1}} + \frac{2^j/t}{|x-y|^{d+1}} \right) \sum_{z\in \E_j\cap Q} \chi_{q_z}(y) dy.
\end{align*}
Therefore, we may bound $|\sum_{Q\in \mathcal{Q}_j} {b_j}(A/t) f_{j,Q}(x)\chi_{(Q^*)^c}(x) |$ by
\begin{equation*}
C 2^{-jN} \int_{ \{ y\in \Omega-\Omega: |y| \geq C 2^j/t \}}  \left(\frac{1}{|y|^{d-1}} + \frac{2^j/t}{|y|^{d+1}} \right) \sum_{z\in \E_j} \chi_{q_z}(x-y) dy.
\end{equation*}
The claim for $p=1,\infty$ follows from the above estimate. Finally, interpolation finishes the proof.
\end{proof}

\subsection{$L^2$ Estimates}
By the result of previous subsection, we may assume that 
\begin{equation}\label{eqn:lowd}
\# \E_j \cap Q \leq \lambda^p \;\;\; \text{for any} \;\;\; Q\in \mathcal{Q}_j.
\end{equation}
Let $1\leq p\leq  2(d+1)/(d+3)$. Our goal is to prove the following $L^2$ estimate;
\begin{equation} \label{eqn:L2}
\bnorm{\sum_j 2^{jd/2} \sum_{z\in \E_j} {b_j}(A/t) f_{j,z}}_{L^2(\Omega)}^2 \les \lambda^{2-p} \log_2\lambda \sum_{j} 2^{jd}t^{-d} \# \E_j.
\end{equation}
Given \eqref{eqn:L2}, \eqref{eqn:weak} follows by Chebyshev's inequality.

Let $G_j = \sum_{z\in \E_j} {b_j}(A/t) f_{j,z}$. Then as in \cite{HNS}, we have
\begin{align*}
\begin{split}
\bnorm{\sum_j 2^{jd/2} G_j}_{L^2(\Omega)}^2 
&\les 
\log_2 \lambda \sum_j 2^{jd} \norm{G_j}_{L^2(\Omega)}^2 \\
&+ \sum_{4\log_2 \lambda< k < j-9} 2^{(k+j)d/2} |\inn{G_j,G_k}_{\Omega}|,
\end{split}
\end{align*}
where $\inn{f,g}_{\Omega} = \int_{\Omega} f(x) \overline{g(x)} dx$.

We claim the following estimates;
\begin{align}\label{eqn:L2j}
\norm{G_j}_{L^2(\Omega)}^2 &\les \lambda^{2-p} t^{-d}\# \E_j \\\label{eqn:scalar}
 |\inn{G_j,G_k}_{\Omega}| & \les \lambda^p 2^{jd/2} 2^{-k(d-1/2)} t^{-d} \# \E_j,
\end{align}
which imply \eqref{eqn:L2}. The proof of \eqref{eqn:scalar} will be given in the next subsections.

We proceed to the proof of \eqref{eqn:L2j}. Note that $G_j = \sum_{Q\in \mathcal{Q}_j} {b_j}(A/t) f_{j,Q} \chi_{Q*} + Er_j$, where $\norm{Er_j}_{L^2(\Omega)}^2 \les 2^{-jN} t^{-d} \# \E_j$ by Lemma \ref{lem:error}. Thus, it is enough to estimate $\norm{\sum_{Q\in \mathcal{Q}_j} [{b_j}(A/t) f_{j,Q}] \chi_{Q*}}_{L^2(\Omega)}^2$. By the proof of Lemma \ref{lem:basiclp}, we have 
\begin{equation}\label{eqn:rest}
\norm{{b_j}(A/t) f}_{L^2(M)}^2 \les t^{2\alpha(p)} \norm{f}_{L^p(M)} ^2.
\end{equation}
Then \eqref{eqn:L2j} follows by almost orthogonality, \eqref{eqn:lowd}, and 
\[ \norm{f_{j,Q}}_{p}^2 \les (t^{-d} \# \E_j \cap Q )^{2/p} \leq t^{-2d/p} \lambda^{2-p} \# \E_j \cap Q. \]

\subsection{Quasi-orthogonality estimates} \label{sec:decom}
In this subsection, we prepare for the proof of \eqref{eqn:scalar}. In order to exploit the orthogonality of eigenfunctions, we first replace $\inn {G_j,G_k}_{\Omega} $ by $\inn {G_j,G_k}.$ We claim that
\begin{align*}
|\inn {G_j,G_k}_{\Omega}| &\les |\inn {G_j,G_k}| + |\inn {G_j,G_k}_{M\setminus \Omega}| \\
&\les |\inn {G_j,G_k}| +2^{j/2} 2^{-kN} t^{-d} \# \E_j.
\end{align*}

Indeed, notice that if $x\notin \Omega$, then $e^{irA/t} f(x) = E_{r/t} f(x)$ for $f$ supported on $X$. Let $f_j = \sum_{z\in \E_j} f_{j,z}$. Then we have the following estimates 
\begin{align*}
\norm{\int \widehat{b_k}(r) E_{r/t} f_k dr}_{L^\infty(M\setminus\Omega)} &\les 2^{-kN}\\
\norm{\int \widehat{b_j}(r) E_{r/t} f_j dr}_{L^1(M\setminus\Omega)} &\les \norm{\widehat{b_j}}_{L^1} \norm{f_j}_{L^1(M)} \les 2^{j/2} t^{-d} \# \E_j.
\end{align*}
The first estimate follows from Lemma \ref{lem:kernel} and that $\norm{f_k}_{L^\infty} \les 1$. 

Therefore, it is sufficient to work with $|\inn {G_j,G_k}|$. By orthogonality, we have
\[
\inn{G_j,G_k}= \sum_{z\in \E_j} \inn{ {b_j}(A/t) f_{j,z}, {b_k}(A/t) f_k}= \sum_{z\in \E_j} \inn{ b_{j,k} (A/t) f_{j,z},f_k},
\]
where $b_{j,k} = b_j \overline{b_k}$. Observe that $\widehat{b_{j,k}} =\widehat{b_j} * \overline{\widehat{b_k}(-\cdot)}$ is supported on $\{r:|r|\in [2^{j-3},2^{j+3}] \}$, $\norm{b_{j,k}}_2 = O(2^j)$ and $b_{j,k} (\lambda) = O(2^{-jN}(1+2^j|\lambda|)^{-N})$ if $\lambda \notin [1/16,16]$. Therefore, an examination of the proof of Lemma \ref{lem:reduction} shows that 
\[b_{j,k}(x,y) = I_{j,k}(x,y) + E_{j,k}(x,y), \]
where the $L^1(\Omega)$ operator norm of $E_{j,k}$ is $O(2^{-jN})$ and 
\begin{equation*}
I_{j,k}(x,y)  = t^d \int Q_j(r,t,x,y)  \widehat{b_{j,k}}(tr) tdr.
\end{equation*}

Therefore, by $L^1$ and $L^\infty$ estimates, $ |\inn{G_j,G_k}|$ is bounded by
\begin{align*}
&\sum_{z\in \E_j} |\inn{ I_{j,k} (A/t) f_{j,z},f_{k}}| + O(2^{-jN} t^{-d} \# \E_j) \\
&\leq \sum_{z\in \E_j} \sum_{z' \in \E_k(z)}  |\inn{ I_{j,k} (A/t) f_{j,z},f_{k,z'}}| + O(2^{-jN} t^{-d} \# \E_j),
\end{align*}
where $\E_k(z) = \{ z' \in \E_k : |z'-z| \les 2^j/t \}$. This restriction on the $z'$-sum comes from the fact that $I_{j,k}(x,y)$ is supported in $\{(x,y)\in \Omega\times \Omega:|x-y|\les 2^j/t\}$.

We decompose $b_{j}$ as a sum of $b_{j}^n$ as $n$ ranges over natural numbers comparable to $2^{j-k}$, where $\widehat{b_{j}^n}$ is $\widehat{b_j}$ smoothly cut off to the set $\{ r: |r|\in [(n-1)2^k, (n+1)2^k] \}$. Then $b_{j,k} = \sum_{n\sim 2^{j-k}} b_{j,k}^n$, where $b_{j,k}^n = b_j^n \overline{b_k}$.

So far we have reduced \eqref{eqn:scalar} to the inequality
\begin{align} \label{eqn:scalar2}
\sum_{ n\sim 2^{j-k}} \sum_{z' \in \E_k(z)}  |\inn{ I^n_{j,k} f_{j,z}, f_{k,z'} }| \les \lambda^p 2^{jd/2} 2^{-k(d-1/2)} t^{-d},
\end{align}
where
\begin{equation*}
I_{j,k}^n (x,y)  = t^d \int Q_j(r,t,x,y) \widehat{b_{j,k}^n}(tr) tdr.
\end{equation*}

We borrow from Section \ref{sec:sec} the notations and the estimate for $Q_j(r,t,x,y)$. We recall that \[ Q_j(r,t,x,y)=\sum_D Q_{j,D}(r,t,x,y)\] as $D$ ranges over $O(2^{j(d-1)/2})$ disks covering a compact set in $\R^{d-1}$, i.e. the $\omega$ space. Since $y\in q_z$, $Q_{j,D}(r,t,x,y)$ rapidly decays, as a function of $x$, away from the set $P_{j,D}(r,z)$. Note also that $\widehat{b^n_{j,k}}$ is supported on a set $\{r: |r|=n2^k + O(2^k) \}$. Therefore, for $y\in q_z$, $I^n_{j,k}(x,y)$ is small if $x\notin P^n_{j,k}(z)$, where 
\[
P^n_{j,k}(z) = \bigcup_D \bigcup_{|tr|=n2^k + O(2^k)}  P_{j,D}(r,z).
\]
In other words, it is $z' \in \E_k(z) \cap P^n_{j,k}(z)$, which makes a major contribution to the inner product $|\inn{ I^n_{j,k} f_{j,z}, f_{k,z'} }|$. From now on, without loss of generality, we shall assume that $\widehat{b^n_{j,k}}$ is supported on $\{r: r=n2^k + O(2^k)\}$, handling the other case separately. 

Before we proceed to the proof of \eqref{eqn:scalar2}, we give an informal discussion. We will need to estimate the number of $\E_k(z) \cap P^n_{j,k}(z)$ using the fact that $\E_k(z) \cap Q \leq \lambda^p$ for any $Q \in \mathcal{Q}_k$. In view of the Euclidean case \cite{LRS}, $P^n_{j,k}(z)$ should be a sort of an $O(2^k/t)$ neighborhood of $z+ n2^k/t S^{d-1}$, which may be covered by $O(2^{(j-k)(d-1)/2})$ many ``plates" of dimension $2^{(j+k)/2}/t \times \cdots 2^{(j+k)/2}/t \times 2^k/t$. 

The discussion above motivates the following definitions.  Let $\Theta$ be a maximal $2^{-(j-k)/2}$ separated collection of points in the $\omega$ space (a compact subset of $\R^{d-1}$). Note that $\# \Theta = O(2^{(j-k)(d-1)/2})$. Define an exceptional set $E(z,r) = \bigcup_{\theta\in \Theta} E^\theta_0(z,r)$ (cf. \cite{LS}), where for $l\geq 0$,
\begin{align*}
E^\theta_l(z,r) = \{ x\in \Omega: |\Phi(r,x,z,\theta)| \leq C_{2}  2^{k+l}/t, \;|\nabla_{\omega} \Phi(r,x,z,\theta) | \leq C_{1} 2^{(j+k)/2+l}/t \}
\end{align*}
with constants $C_2 \gg C_1 \gg C_a$ for some $C_a$ to be determined in Section \ref{sec:proofsca2}.

Let $\D_\theta$ be a collection of disks $D$ such that $|\theta - \omega_D| \leq 2^{-(j-k)/2}$ in such a way that for each disk $D$, there is a unique $\theta$ such that $D\in \D_\theta$. For each $\theta\in \Theta$, we have
\[ \Omega \setminus E(z,r)  \subset \Omega \setminus  E^\theta_0(z,r) = \bigcup_{l\geq 1} A^\theta_l(z,r), \]
where $A^\theta_l(z,r)= E^\theta_l(z,r) \setminus E^\theta_{l-1}(z,r)$. 
This gives us the basic splitting
\begin{align*}
\sum_{z' \in \E_k(z)}  |\inn{ I^n_{j,k} f_{j,z}, f_{k,z'} }| &\les \sum_{z' \in \E_k(z)\cap E(z,n2^k/t)}  |\inn{ I^n_{j,k} f_{j,z}, f_{k,z'} }|  \\
&+ \sum_{\theta\in \Theta} \sum_{D\in \D_{\theta}} \sum_{l\geq 1} \sum_{z' \in \E_k(z) \cap A^\theta_l(z,n2^k/t) } |\inn{ I^n_{j,k,D} f_{j,z}, f_{k,z'} }|,
\end{align*}
where $I^n_{j,k,D}$ is the operator with the kernel 
\[  I_{j,k,D}^n (x,y)  = t^d \int Q_{j,D}(r,t,x,y) \widehat{b_{j,k}^n}(tr)t dr. \]

\begin{lem}\label{lem:sc1} Assume that $f_z$ and $f_{z'}$ are functions bounded by the characteristic functions on $q_z$ and $q_{z'}$, respectively. If $z' \in \E_k(z)\cap E(z,n2^k/t)$, then
\begin{equation}\label{eqn:sca1}
|\inn{ I^n_{j,k} f_{z}, f_{z'} }| \les t^{-d}2^{-j(d-1)/2} \norm{\widehat{b^n_j}}_{2}.
\end{equation}

If $z' \in \E_k(z)\cap A^\theta_l(z,n2^k/t)$ and $D\in \D_\theta$, then 
\begin{equation}\label{eqn:sca2}
|\inn{ I^n_{j,k,D} f_{z}, f_{z'} }| \les t^{-d} 2^{-j(d-1)/2} 2^{-(k+l)N} \norm{\widehat{b^n_j}}_1.
\end{equation}
\end{lem}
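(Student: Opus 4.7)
My plan is to prove both inequalities by establishing uniform pointwise bounds on the relevant kernel on $q_{z'}\times q_z$ and then multiplying by $\|f_z\|_1\|f_{z'}\|_1\leq t^{-2d}$ to pass to the inner product.

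For \eqref{eqn:sca2}, I would upgrade Lemma \ref{lem:ptbound} to carry a rapid decay factor $2^{-(k+l)N}$. Fix $(x,y)\in q_{z'}\times q_z$, $D\in\mathcal{D}_\theta$, and $r$ in the support of $\widehat{b^n_{j,k}}$, so that $|r-r_0|\les 2^k/t$ for $r_0=n2^k/t$; also $|\omega_D-\theta|\leq 2^{-(j-k)/2}$ and $|x-z'|,|y-z|\les 1/t$. Using the derivative sizes $\partial_{x,y}\Phi=O(1)$, $\partial_r\Phi=O(1)$, $\partial_\omega\Phi=O(2^j/t)$, $\partial_\omega^2\Phi=O(2^j/t)$ (consequences of $\Phi=\varphi+ra$ with $|r|,|x-y|\les 2^j/t$), Taylor's theorem produces
\begin{align*}
|\Phi(r,x,y,\omega_D)-\Phi(r_0,z',z,\theta)| &\les |\nabla_\omega\Phi(r_0,z',z,\theta)|\cdot 2^{-(j-k)/2}+C_a\,2^k/t,\\
|\nabla_\omega\Phi(r,x,y,\omega_D)-\nabla_\omega\Phi(r_0,z',z,\theta)| &\les C_a\,2^{(j+k)/2}/t.
\end{align*}
Since $z'\notin E^\theta_{l-1}$, at least one of $t|\Phi(r_0,z',z,\theta)|>C_2\,2^{k+l-1}$ or $2^{-j/2}t|\nabla_\omega\Phi(r_0,z',z,\theta)|>C_1\,2^{k/2+l-1}$ must hold. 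The ordering $C_2\gg C_1\gg C_a$ ensures the surviving inequality at $(r,x,y,\omega_D)$ remains of order $2^{k+l}$ or $2^{k/2+l}$, so Lemma \ref{lem:ptbound} yields $|Q_{j,D}(r,t,x,y)|\les 2^{-j(d-1)/2}\,2^{-(k+l)N}$ after adjusting $N$. Integrating against $|\widehat{b^n_{j,k}}(tr)|\,t\,dr$ contributes $\|\widehat{b^n_{j,k}}\|_1\les 2^{k/2}\|\widehat{b^n_j}\|_1$ (from $\|\widehat{b_k}\|_1\les 2^{k/2}$ via Cauchy--Schwarz on its support), and multiplying by $t^{-2d}$ and absorbing $2^{k/2}$ into $2^{-(k+l)N}$ produces \eqref{eqn:sca2}.

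For \eqref{eqn:sca1} I seek $|I^n_{j,k}(x,y)|\les t^d\,2^{-j(d-1)/2}\|\widehat{b^n_j}\|_2$ on $q_{z'}\times q_z$. Two inputs suffice. First, $|Q_j(r,t,x,y)|\les 2^{-j(d-1)/2}$ pointwise: the joint conditions $t|\Phi|\les 1$ and $2^{-j/2}t|\nabla_\omega\Phi|\les 1$ pin $\omega$ to a set of diameter $\les 2^{-j/2}$, the scale of a single disk, so only $O(1)$ disks $D$ contribute to $\sum_D Q_{j,D}$ without rapid decay. Second, for fixed $(x,y)$, the dominant disk's phase evolves in $r$ at rate $\partial_r\Phi=a(y,\xi)\sim 1$, giving $|Q_j(r,t,x,y)|\les 2^{-j(d-1)/2}(1+t|r-r_*(x,y)|)^{-N}$ and hence $\int|Q_j(r,t,x,y)|\,dr\les 2^{-j(d-1)/2}/t$. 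Combining,
\begin{equation*}
|I^n_{j,k}(x,y)|\leq t^{d+1}\sup_u|\widehat{b^n_{j,k}}(u)|\int|Q_j(r,t,x,y)|\,dr\les t^d\,2^{-j(d-1)/2}\,\|\widehat{b^n_{j,k}}\|_\infty.
\end{equation*}
Writing $\widehat{b^n_{j,k}}=\widehat{b^n_j}*\overline{\widehat{b_k}(-\cdot)}$ and applying Cauchy--Schwarz pointwise gives $\|\widehat{b^n_{j,k}}\|_\infty\leq\|\widehat{b^n_j}\|_2\|\widehat{b_k}\|_2\les\|\widehat{b^n_j}\|_2$; multiplication by $t^{-2d}$ then yields \eqref{eqn:sca1}.

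The main obstacle is the Taylor analysis underlying \eqref{eqn:sca2}: one must balance three independent error sources---variation of $(x,y)$ by $O(1/t)$, of $\omega$ by $O(2^{-(j-k)/2})$, and of $r$ by $O(2^k/t)$---against two competing lower bounds $C_2\,2^{k+l-1}/t$ on $|\Phi|$ and $C_1\,2^{(j+k)/2+l-1}/t$ on $|\nabla_\omega\Phi|$ at the reference point $(r_0,z',z,\theta)$. The worst error, of size $|\nabla_\omega\Phi|\cdot 2^{-(j-k)/2}$, can reach $C_1\cdot 2^{k+l}/t$, competing directly with the $\Phi$-threshold; this forces the hierarchy $C_2\gg C_1\gg C_a$ to be consistent across both branches of the dichotomy $z'\notin E^\theta_{l-1}$ simultaneously.
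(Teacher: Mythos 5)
Your proof of \eqref{eqn:sca2} is essentially the paper's proof (Lemma~\ref{lem:qd}): the same two--branch dichotomy on whether $|\nabla_\omega\Phi|$ or $|\Phi|$ is large at the reference point $(r_0,z',z,\theta)$, the same use of \eqref{eqn:omega2}--\eqref{eqn:omega3} via the second $\omega$-derivative to propagate the failure of (i) to nearby $\omega$, the same hierarchy $C_2\gg C_1\gg C_a$, then Lemma~\ref{lem:ptbound} and $\norm{\widehat{b^n_{j,k}}}_1\les 2^{k/2}\norm{\widehat{b^n_j}}_1$. This part is fine.

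For \eqref{eqn:sca1} your route is genuinely different from the paper's and has a gap. The paper rewrites $I(t,x,y)$ in cosphere polar coordinates, applies stationary phase on $\Sigma_y$ to get $|\partial_r^N H|\les (1+\rho|x-y|)^{-(d-1)/2}$, and closes with Cauchy--Schwarz in $\rho$ to obtain $|I(t,x,y)|\les (1+t|x-y|)^{-(d-1)/2}\norm{\kappa_1}_2\norm{\kappa_2}_2$; the factor $2^{-j(d-1)/2}$ is then supplied \emph{only} by showing $|z-z'|\sim 2^j/t$, which is a consequence of $z'\in E(z,n2^k/t)\cap\E_k(z)$ (this is stated and proved at the start of the paper's argument). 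You never use $z'\in E(z,n2^k/t)$, yet without $|x-y|\sim 2^j/t$ the bound $\int|Q_j(r,t,x,y)|\,dr\les 2^{-j(d-1)/2}/t$ fails: for $|x-y|\ll 2^j/t$ and $|r|$ comparable to $|x-y|$, the Hessian $\partial_\omega^2\Phi\approx r\,\partial_\omega^2 a$ is much smaller than $2^j/t$, so the nondecay region $\{\omega:\ |\Phi|\les 1/t,\ 2^{-j/2}|\nabla_\omega\Phi|\les 1/t\}$ is not confined to $O(1)$ disks. Once you discard the $r$-support of $\widehat{b^n_{j,k}}$ by replacing it with $\sup_u|\widehat{b^n_{j,k}}(u)|$ the uniform pointwise estimate you seek is no longer valid over the whole of $\{|x-y|\les 2^j/t\}$.

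More importantly, the step ``the dominant disk's phase evolves in $r$ at rate $\partial_r\Phi\sim 1$, giving $|Q_j(r,t,x,y)|\les 2^{-j(d-1)/2}(1+t|r-r_*(x,y)|)^{-N}$'' is not correct as stated. There is no single resonant $r_*(x,y)$: each cap $D$ has its own $r_*(x,y,\omega_D)$ determined by $\Phi(r,x,y,\omega_D)=0$, and as $\omega_D$ ranges over $O(2^{j(d-1)/2})$ disks these resonant times spread over an interval of length $\sim 2^j/t$. Summing $\int|Q_{j,D}(r)|\,dr\les 2^{-j(d-1)/2}/t$ over the disks then gives $O(1/t)$, not $O(2^{-j(d-1)/2}/t)$. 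The correct bound $\int|Q_j(r,t,x,y)|\,dr\les 2^{-j(d-1)/2}/t$ (for $|x-y|\sim 2^j/t$) can still be obtained, but it requires a joint change of variables $(r,\omega)\mapsto(\Phi,\nabla_\omega\Phi)$ with Jacobian $\sim a\,\det\partial_\omega^2\Phi\sim(2^j/t)^{d-1}$, i.e., precisely the curvature hypothesis again; without making that computation your two ``inputs'' do not combine to give the stated integral estimate. Either supply that Jacobian argument (restricting to the support of $\widehat{b^n_{j,k}}$, where $|r|\sim 2^j/t$ guarantees the needed nondegeneracy), or follow the paper's stationary-phase route, but in both cases you must first record that $z'\in E(z,n2^k/t)\cap\E_k(z)$ forces $|z-z'|\sim 2^j/t$.
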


Given Lemma \ref{lem:sc1}, we can prove \eqref{eqn:scalar2}. Note that $E^\theta_0(z,n2^k/t)$ can be covered by $O(2^{(j-k)(d-1)/2})$ many cubes $Q\in \mathcal{Q}_k$ and $\# \Theta =O(2^{(j-k)(d-1)/2})$. This implies
\begin{align*}
\# \E_k(z) \cap E(z,n2^k/t) \les \lambda^p 2^{(j-k)(d-1)}.
\end{align*}
Therefore, we may estimate the main term by
\begin{align*}
&\sum_{ n\sim 2^{j-k}} \sum_{z' \in \E_k(z)\cap E(z,n2^k/t)}  |\inn{ I^n_{j,k} f_{j,z}, f_{k,z'} }|  \\
&\les \lambda^p t^{-d}2^{j(d-1)/2}  2^{-k(d-1)}\sum_{ n\sim 2^{j-k}} \norm{\widehat{b_j^n}}_2.
\end{align*}

Finally, we get the desired result since
\[ \sum_{ n\sim 2^{j-k}} \norm{\widehat{b_j^n}}_2 \les 2^{(j-k)/2}\big(\sum_{ n\sim 2^{j-k}} \norm{\widehat{b_j^n}}_2^2 \big)^{1/2} \les 2^{(j-k)/2} \norm{\widehat{b_j}}_2\leq 2^{(j-k)/2}. \] 

Next, we note that $E^\theta_l(z,n2^k/t)$ can be covered by $O(2^{(j-k)(d-1)/2}2^{ld})$ many cubes $Q\in \mathcal{Q}_k$ which gives
\begin{align*}
\# \E_k(z) \cap A^\theta_l(z,n2^k/t) \les \lambda^p 2^{(j-k)(d-1)/2}2^{ld}.
\end{align*}
Therefore,
\begin{align*}
&\sum_{ n\sim 2^{j-k}}  \sum_{\theta\in \Theta} \sum_{D\in \D_{\theta}} \sum_{l\geq 1} \sum_{z' \in \E_k(z) \cap A^\theta_l(z,n2^k/t) } |\inn{ I^n_{j,k,D} f_{j,z}, f_{k,z'} }| \\
&\les \lambda^p t^{-d} 2^{-kN} \sum_{D} \sum_{l\geq 1} 2^{l(d-N)} \sum_{ n\sim 2^{j-k}} \norm{\widehat{b^n_j}}_1 \\
&\les \lambda^p t^{-d} 2^{-kN}  2^{j(d-1)/2} \norm{\widehat{b_j}}_1 \les \lambda^p t^{-d} 2^{-kN}  2^{jd/2}
\end{align*}
since there are $O(2^{j(d-1)/2})$ many disks $D$ and $\norm{\widehat{b_j}}_1\les 2^{j/2}\norm{\widehat{b_j}}_2$. This reduces the proof of \eqref{eqn:scalar2} to Lemma \ref{lem:sc1}, which we proceed to prove in the following subsections.

\subsection{Proof of \eqref{eqn:sca1}}
We first observe that if $z' \in \E_k(z)\cap E(z,n2^k/t)$, then $|z'-z|$ is comparable to $2^j/t$. To see this, assume that $|z'-z| \les 2^j/t$ and $z' \in  E_0^\theta(z,n2^k/t)$ for some $\theta \in \Theta$. Then $|\Phi(n2^k/t,z',z,\theta)| \les 2^k/t$, that is, 
\begin{align}\label{eqn:dfe}
|\inn{z'-z,(\theta,1)} + O(|z'-z|^2) + n2^k a(z,(\theta,1))/t|   \les 2^k/t.
\end{align} 
Observe that $|a(y,\xi)| \sim |\xi|$ uniformly in $y\in M$. Since $|z'-z|^2 \les |2^j/t|^2 \les \epsilon 2^j/t$ and $n2^k \sim 2^j$, we conclude that \[ |O(|z'-z|^2) + n2^k a(z,(\theta,1)) /t | \sim 2^j/t. \]
This, combined with \eqref{eqn:dfe}, implies that 
\[ |z'-z| \ges |\inn{z'-z, (\theta,1)}| \ges 2^j/t, \] 
as desired.

Assume that we have
\begin{equation} \label{eqn:sca11}
\begin{split}
&\abs{I(t,x,y) := \int \widehat{\kappa_1 \kappa_2}(r) \int e^{it \phi(x,y,\xi)} e^{ira(y,\xi)} q(r/t,x,y,t\xi) \tilde{\eta}(\xi) d\xi dr} \\
&\les (1+t|x-y|)^{-(d-1)/2}) \norm{\kappa_1}_{L^2}\norm{\kappa_2}_{L^2}.
\end{split}
\end{equation}
With $\kappa_1 = b^n_j$ and $\kappa_2 = \overline{b_k}$ which is $L^2$ normalized, we get
\begin{align*}
|\inn{ I^n_{j,k} f_{z}, f_{z'} }| &\leq \iint t^d | I(t,x,y)| \chi_{q_z}(y) \chi_{q_{z'}}(x) dxdy \\
&\les  (1+t|z-z'|)^{-(d-1)/2} \norm{b^n_{j}}_{2} \iint t^d \chi_{q_z}(y) \chi_{q_{z'}}(x) dxdy \\
&\les 2^{-j(d-1)/2} \norm{b^n_{j}}_{2} t^{-d},
\end{align*}
giving \eqref{eqn:sca1}.

It only remains to prove \eqref{eqn:sca11}. First, by using the generalized polar-coordinate for the cosphere $\Sigma_y = \{ \xi: a(y,\xi) =1 \}$, namely $\xi = \rho \gamma$ for $(\rho,\gamma) \in \R_+\times \Sigma_y$ (see e.g. \cite{Dappa}), we have
\[ 
I(t,x,y) = \iint \widehat{\kappa_1 \kappa_2} (r) e^{ir\rho} H(r/t,x,y,\rho) dr \rho^{d-1} d\rho,
\]
where \[ H(r,x,y,\rho) = \int_{\Sigma_y} q(r,x,y,t\rho \gamma) e^{i\rho \phi(x,y,\gamma)}  \tilde{\eta} (\rho \gamma) d\mu_y (\gamma) \]
for a smooth measure $d\mu_y$ on $\Sigma_y$.

Since $q$ is a symbol of order $0$ and $\phi(x,y,\gamma) = \inn{x-y,\gamma} + O(|x-y|^2|\gamma|)$, one can obtain by our curvature assumption on $\Sigma_y$ and the method of stationary phase that 
\begin{equation*}
|\partial_r^N H(r,x,y,\rho)| \les (1+\rho|x-y|)^{-(d-1)/2},
\end{equation*}
for $N\geq 0$. If we denote by $\check{H}(\eta,x,y,\rho)$ the inverse Fourier transform of $H$ for the $r$-variable, it follows that 
\begin{equation*}
|\check{H}(\eta,x,y,\rho)| \les (1+|\eta|)^{-N} (1+\rho|x-y|)^{-(d-1)/2}.
\end{equation*}

Moreover we note that $|\check{H}(\eta,x,y,\rho)|$ vanishes if $\rho \notin [C^{-1},C]$ for some large constant $C=C_a$ due to the support of $\tilde{\eta}$ and the fact that $|\gamma| \sim 1$ if $\gamma \in \cup_y \Sigma_y$. Thus,
\begin{align*}
&|I(t,x,y)| = \abs{\int_{C^{-1}}^C \int \kappa_1(\rho-\eta) \kappa_2(\rho-\eta) t\check{H}(t\eta,x,y,t\rho) d\eta \rho^{d-1} d\rho} \\
&\les  (1+t|x-y|)^{-(d-1)/2} \int \int_{C^{-1}}^C |\kappa_1 (\rho-\eta) \kappa_2 (\rho-\eta) |d\rho  \frac{t}{ (1+t|\eta|)^{N}} d\eta \\
&\les (1+t|x-y|)^{-(d-1)/2} \norm{\kappa_1}_{2}\norm{\kappa_2}_{2}
\end{align*}
by the Cauchy-Schwarz inequality in $\rho$-variable, giving \eqref{eqn:sca11}.

\subsection{Proof of \eqref{eqn:sca2}} \label{sec:proofsca2}
We may bound $|\inn{ I^n_{j,k,D} f_{z}, f_{z'} }|$ by
\begin{equation} \label{eqn:sca2re}
t^d \iiint |Q_{j,D}(r,t,x,y) f_z(y) f_{z'}(x)| dy  dx |\widehat{b^n_{j,k}}(tr)|tdr.
\end{equation}

We need the following lemma.
\begin{lem}\label{lem:qd}
Let $z' \in A^\theta_l(z,r_0)$ for some $|r_0| \sim 2^j/t$. Then
\begin{equation}\label{eqn:qdestimate}
|Q_{j,D}(r,t,x,y)| \les 2^{-j(d-1)/2}2^{-(k+l)N},
\end{equation}
provided that $|r-r_0|$, $|x-z'|$, and $|y-z|$ are $O(2^k/t)$ and $|\omega_D - \theta| = O(2^{(k-j)/2}).$
\end{lem}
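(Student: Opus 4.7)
The plan is to derive the lemma by invoking Lemma \ref{lem:ptbound} after converting the hypothesis $z'\in A^\theta_l(z,r_0)$ into pointwise lower bounds on either $|\Phi(r,x,y,\omega_D)|$ or $|\nabla_\omega\Phi(r,x,y,\omega_D)|$. By the definition $A^\theta_l=E^\theta_l\setminus E^\theta_{l-1}$, one of the following must hold: either (I) $|\Phi(r_0,z',z,\theta)|>C_2 2^{k+l-1}/t$ while $|\nabla_\omega\Phi(r_0,z',z,\theta)|\leq C_1 2^{(j+k)/2+l}/t$, or (II) $|\nabla_\omega\Phi(r_0,z',z,\theta)|>C_1 2^{(j+k)/2+l-1}/t$ while $|\Phi(r_0,z',z,\theta)|\leq C_2 2^{k+l}/t$. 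In case (I), the target is $t|\Phi(r,x,y,\omega_D)|\ges 2^{k+l}$; in case (II), $2^{-j/2}t|\nabla_\omega\Phi(r,x,y,\omega_D)|\ges 2^{k/2+l}$.

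I would carry out the transfer in two steps. First, I swap the frequency parameter from $\theta$ to $\omega_D$. Using $|\omega_D-\theta|=O(2^{(k-j)/2})$ together with the estimate $|\nabla_\omega^2\Phi|=O(|x-y|+|r|)=O(2^j/t)$ from Section \ref{sec:sec}, Taylor expansion in $\omega$ gives
\begin{align*}
\Phi(r_0,z',z,\omega_D)&=\Phi(r_0,z',z,\theta)+\nabla_\omega\Phi(r_0,z',z,\theta)\cdot(\omega_D-\theta)+O(2^k/t),\\
\nabla_\omega\Phi(r_0,z',z,\omega_D)&=\nabla_\omega\Phi(r_0,z',z,\theta)+O(2^{(j+k)/2}/t).
\end{align*}
In case (I), the middle term of the first identity is bounded by $C_1 2^{(j+k)/2+l}/t\cdot O(2^{(k-j)/2})=O(C_1\,2^{k+l}/t)$, which is absorbed into the main term $C_2 2^{k+l-1}/t$ once $C_2\gg C_1$. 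In case (II), the error $O(2^{(j+k)/2}/t)$ in the second identity is absorbed once $C_1$ is taken sufficiently large. Second, I replace $(r_0,z',z)$ by $(r,x,y)$; since each of $|r-r_0|$, $|x-z'|$, $|y-z|$ is $O(2^k/t)$ and the first-order $r$-, $x$-, $y$-derivatives of both $\Phi$ and $\nabla_\omega\Phi$ are $O(1)$ in the parameter range ($|r|\les\epsilon$, $|x-y|\les\epsilon$), each identity picks up only an additional $O(C_a\,2^k/t)$. Fixing the hierarchy $C_2\gg C_1\gg C_a$ at the outset (permitted by the construction of $E^\theta_l$) yields the desired lower bounds on $|\Phi(r,x,y,\omega_D)|$ in case (I) and on $|\nabla_\omega\Phi(r,x,y,\omega_D)|$ in case (II).

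Feeding these lower bounds into Lemma \ref{lem:ptbound} with exponent $2N$ in place of $N$ finishes the argument. In case (I) we obtain $(1+t|\Phi|)^{-2N}\les 2^{-2(k+l)N}\leq 2^{-(k+l)N}$, and in case (II) we get $(1+2^{-j/2}t|\nabla_\omega\Phi|)^{-2N}\les 2^{-2(k/2+l)N}=2^{-(k+2l)N}\leq 2^{-(k+l)N}$, so in either case
\[|Q_{j,D}(r,t,x,y)|\les 2^{-j(d-1)/2}\cdot 2^{-(k+l)N}\]
after renaming $N$.

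The main obstacle is the $\omega$-Taylor step: because $\nabla_\omega\Phi$ is naturally of size $O(2^j/t)$ while $|\omega_D-\theta|$ is as large as $2^{(k-j)/2}$, the cross term $\nabla_\omega\Phi(r_0,z',z,\theta)\cdot(\omega_D-\theta)$ in case (I) already has size $O(C_1\,2^{k+l}/t)$, the very same order as the main quantity $C_2 2^{k+l}/t$ being preserved. This is the reason the two thresholds defining $E^\theta_l$ must be chosen at the disparate scales $2^{k+l}/t$ and $2^{(j+k)/2+l}/t$ (rather than both at the same scale), and the reason one needs the strict hierarchy $C_2\gg C_1\gg C_a$ rather than merely large constants of comparable size.
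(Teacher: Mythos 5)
Your proof is correct and follows essentially the same route as the paper: split according to which defining inequality of $E^\theta_{l-1}$ fails, transfer the resulting lower bound from $(r_0,z',z,\theta)$ to $(r,x,y,\omega_D)$ using the derivative bounds $\partial_{r,x,y}\Phi,\ \partial_{r,x,y}\nabla_\omega\Phi=O(C_a)$ and $\partial_\omega^\alpha\Phi=O(2^j/t)$, absorb the errors via $C_2\gg C_1\gg C_a$, and conclude with Lemma \ref{lem:ptbound} (with $2N$ in place of $N$). The only cosmetic difference is that you control $|\Phi(\cdot,\theta)-\Phi(\cdot,\omega_D)|$ by an explicit first-order Taylor cross term, whereas the paper extends the gradient bound to all nearby $\omega$ and integrates; your closing remark about why the two thresholds must sit at the scales $2^{k+l}/t$ and $2^{(j+k)/2+l}/t$ is exactly the point of the paper's construction.
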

In the integral \eqref{eqn:sca2re}, $r,x,y$ and $\omega_D$ satisfy the conditions in Lemma \ref{lem:qd} with $r_0=n2^k$ by our assumptions. Therefore, we may bound \eqref{eqn:sca2re} by
\begin{equation*}
t^{-d} 2^{-j(d-1)/2} 2^{-(k+l)N} \norm{\widehat{b^n_{j,k}}}_{1}.
\end{equation*}
The proof is finished once we observe that
\[ \norm{\widehat{b^n_{j,k}}}_{1} \les \norm{\widehat{b^n_{j}}}_1 \norm{\widehat{b_k}}_1 \les 2^{k/2}\norm{\widehat{b^n_{j}}}_1. \]

\begin{proof}[Proof of Lemma \ref{lem:qd}]
If $z' \in A^\theta_l(z,r_0)$, then there are two possibilities;
\ben
\item $|\nabla_{\omega} \Phi(n2^k/t,z',z,\theta) | >C_1 2^{(j+k)/2+l-1}/t.$  
\item Condition (i) fails but $|\Phi(n2^k/t,z',z,\theta)| > C_2  2^{k+l-1}/t.$
\een

We first consider the case (i). Note that the magnitudes of the derivatives of $\Phi$ and $\nabla_\omega \Phi$ with respect to $r,x,y$ variables are (much) smaller than $C_a$, where
\[ C_a = C[1+\sup_{y\in M, |\xi|=1, |\alpha|\leq 2} |\partial_{\xi}^{\alpha} a(y,\xi) |] , \]
for a sufficiently large constant $C$. As a result, we have
\begin{equation}\label{eqn:rxy}
|\nabla_{\omega} \Phi(n2^k/t,z',z,\theta) - \nabla_{\omega} \Phi(r,x,y,\theta)| \leq C_a 2^k/t.
\end{equation}

We observe that the magnitudes of the derivatives of $\Phi$ with respect to $\omega$ variables are (much) smaller than $C_a 2^j/t.$ As a result, 
\begin{equation}\label{eqn:omega}
 |\nabla_{\omega} \Phi(r,x,y,\theta) - \nabla_{\omega} \Phi(r,x,y,\omega_D)| \leq C_a 2^{(j+k)/2}/t .
\end{equation}
Combining \eqref{eqn:rxy}, \eqref{eqn:omega}, and (i), we obtain 
\begin{equation*}
 |\nabla_{\omega} \Phi(r,x,y,\omega_D)| \ges 2^{(j+k)/2+l}/t,
\end{equation*}
which implies \eqref{eqn:qdestimate} by Lemma \ref{lem:ptbound}.

The argument is slightly more delicate in the case (ii). First we observe that the failure of (i) implies that
\begin{equation} \label{eqn:omega2}
|\nabla_{\omega} \Phi(n2^k/t,z',z,\omega)| \leq 2 C_1 2^{(j+k)/2+l-1}/t,
\end{equation}
whenever $\omega = \theta + O(2^{(k-j)/2})$.

\eqref{eqn:omega2} implies that
\begin{equation}\label{eqn:omega3}
 |\Phi(n2^k,z',z,\theta) - \Phi(n2^k,z',z,\omega_D)| \leq 4 C_1 2^{k+l-1}/t .
\end{equation}
Moreover, as in \eqref{eqn:rxy}, we have
\begin{equation}\label{eqn:rxy2}
|\Phi(n2^k,z',z,\omega_D) - \Phi(r,x,y,\omega_D)| \leq C_a 2^k/t.
\end{equation}

Finally, our assumptions, \eqref{eqn:omega3}, and \eqref{eqn:rxy2} give
\begin{equation*}
 |\Phi(r,x,y,\omega_D)| \ges 2^{k+l}/t,
\end{equation*}
which implies \eqref{eqn:qdestimate} by Lemma \ref{lem:ptbound}.
\end{proof}

\section{Proof of Theorem \ref{thm:main2}} \label{sec:main2}
\subsection{Reduction to a local estimate}
For proof of Theorem \ref{thm:main2}, we shall use Proposition \ref{prop:main} and the atomic decomposition in \cite{Seeger,LRS}. We first recall the following fact.
\begin{lem}
Let $1\leq q\leq \infty$ and $\alpha>0$. Assume that $\psi_1$ and $\psi_2$ are non-trivial smooth functions with compact supports in $(0,\infty)$. Then
\[
\sup_{t>0} \norm{m(t\cdot)\psi_1}_{B^2_{\alpha,q}} \simeq 
\sup_{t>0} \norm{m(t\cdot)\psi_2}_{B^2_{\alpha,q}},
\]
where the implicit constant is independent of $m$.
\end{lem}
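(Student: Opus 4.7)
The plan is to express $\psi_2$ as a finite sum $\sum_i \eta_i(\lambda)\psi_1(\lambda/s_i)$ with fixed scales $s_i > 0$ and fixed cutoffs $\eta_i \in C_c^\infty((0,\infty))$, then to reduce each term in $B^2_{\alpha,q}(\R)$ to a norm of the form $\norm{m(\tau\cdot)\psi_1}_{B^2_{\alpha,q}}$ for some $\tau > 0$ via the one-dimensional Besov dilation rule and the stability of $B^2_{\alpha,q}$ under multiplication by $C_c^\infty$ functions, and finally to absorb the scaling parameters into the supremum using the invariance $\sup_{\tau>0} = \sup_{\tau s_i > 0}$.

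To build the partition, since $\psi_1$ is non-trivial choose $\lambda_0 > 0$ with $\psi_1(\lambda_0) \neq 0$. By continuity, for each $s > 0$ the function $\lambda \mapsto |\psi_1(\lambda/s)|^2$ is bounded below by a positive constant on a neighborhood of $s\lambda_0$. Since $\supp \psi_2$ is a compact subset of $(0,\infty)$, finitely many such neighborhoods, coming from parameters $s_1,\ldots,s_N > 0$, cover it, so
\[
\Psi(\lambda) := \sum_{i=1}^N |\psi_1(\lambda/s_i)|^2 \geq c > 0 \qquad \text{on } \supp \psi_2.
\]
Setting $\eta_i(\lambda) := \psi_2(\lambda)\,\overline{\psi_1(\lambda/s_i)}/\Psi(\lambda)$, extended by zero off $\supp \psi_2$, produces $\eta_i \in C_c^\infty((0,\infty))$ satisfying $\psi_2 = \sum_{i=1}^N \eta_i(\cdot)\,\psi_1(\cdot/s_i)$.

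For each term, the change of variable $\lambda = s_i\mu$ together with the standard scaling relation $\norm{f(s\cdot)}_{B^2_{\alpha,q}(\R)} \simeq_s \norm{f}_{B^2_{\alpha,q}(\R)}$ gives
\[
\norm{m(t\cdot)\,\eta_i(\cdot)\,\psi_1(\cdot/s_i)}_{B^2_{\alpha,q}(\R)} \les \norm{\eta_i(s_i\cdot)\cdot m(ts_i\cdot)\psi_1}_{B^2_{\alpha,q}(\R)},
\]
with implicit constant depending on $s_i$ and $\alpha$ but not on $m$ or $t$. The factor $\eta_i(s_i\cdot) \in C_c^\infty(\R)$ is now a fixed function, so pointwise multiplication by it is a bounded operator on $B^2_{\alpha,q}(\R)$, and hence the right-hand side is dominated by $\norm{m(ts_i\cdot)\psi_1}_{B^2_{\alpha,q}(\R)}$.

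Taking $\sup_{t>0}$, using $\sup_{t>0}\norm{m(ts_i\cdot)\psi_1}_{B^2_{\alpha,q}} = \sup_{t>0}\norm{m(t\cdot)\psi_1}_{B^2_{\alpha,q}}$, and summing in $i$ yields the inequality $\sup_t \norm{m(t\cdot)\psi_2}_{B^2_{\alpha,q}} \les \sup_t \norm{m(t\cdot)\psi_1}_{B^2_{\alpha,q}}$; interchanging the roles of $\psi_1,\psi_2$ produces the reverse direction. The only non-trivial input is the boundedness of pointwise multiplication by a fixed $C_c^\infty(\R)$ function on $B^2_{\alpha,q}(\R)$, which is the main technical step but is entirely standard (provable directly from the Littlewood--Paley characterization of $B^2_{\alpha,q}$); I do not anticipate any genuine obstacle.
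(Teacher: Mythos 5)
Your proof is correct. Note that the paper itself gives no argument for this lemma: it only remarks that the statement ``can be obtained by modifying the proof of \cite[Lemma 2.4]{GS}.'' Your self-contained argument --- covering $\supp\psi_2$ by finitely many dilates of a region where $\psi_1$ is nonvanishing, forming the smooth quotients $\eta_i=\psi_2\overline{\psi_1(\cdot/s_i)}/\Psi$, and then invoking the dilation invariance of $B^2_{\alpha,q}$ (for each fixed scale $s_i$) together with boundedness of multiplication by a fixed $C_c^\infty$ function --- is precisely the standard partition-of-unity argument that such a modification produces, and each step (the positivity of $\Psi$ on a neighborhood of $\supp\psi_2$, the finite sum, the absorption of $s_i$ into the supremum over $t$) is sound. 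The only input you flag as nontrivial, boundedness of multiplication by a fixed $C_c^\infty$ function on $B^2_{\alpha,q}$ for $\alpha>0$, is indeed standard from the Littlewood--Paley definition, so there is no gap.
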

This seems to be well known and can be obtained by modifying the proof of \cite[Lemma 2.4]{GS} used to prove a similar result. By the lemma, we may assume that $\psi$ is a smooth non-negative function supported on $[1/2,2]$, such that 
$\sum_{t\in 2^{\Z}} \psi(r/t)^2= 1$ for $r>0$. We emphasize that, here and in what follows, $t\in 2^{\Z}$ is a \emph{dyadic} number, i.e. $2^n$ for some $n\in \Z$.

Let $m^t = m(t \cdot)\psi$, $m^t_j = m^t * \check{\phi_j}$, and $\sum_t = \sum_{t\in 2^\Z}$. We decompose $m$ as 
\begin{align*}
m &= \sum_{t} m \psi(\cdot/t)^2 = \sum_{t} m^t(\cdot /t) \psi(\cdot/t) = \sum_{t} \sum_j m^t_j (\cdot /t) \psi(\cdot/t) \\
&= \sum_{t} \eta(\cdot/t)\sum_{1\leq 2^j < \epsilon t} m^t_j (\cdot /t) \psi(\cdot/t) + \sum_{t} \eta(\cdot/t) \sum_{2^j \geq \epsilon t} m^t_j (\cdot /t) \psi(\cdot/t)  
\\&= m_I + m_{II}.
\end{align*}
Here we used the fact that $\psi = \eta \psi$. 

Let \[ C_{p,q} = \sup_{t\in 2^\Z} \norm{m^t}_{B^2_{\alpha(p),q}}. \]  
We first handle $m_{II}$. By compactness, (almost) orthogonality and \eqref{eqn:restest}, we may bound $\norm{m_{II}(A)f}_{L^p(M)}$ by a constant times
\begin{align*}
\norm{m_{II}(A) f}_{L^2(M)} 
&\les \bigg( \sum_{t} \bnorm{\sum_{2^j\geq \epsilon t} m^t_j (A/t) \psi(A/t) f}_{L^2(M)}^2  \bigg)^{1/2} \\
&\les \sup_{t>0} \norm{m^t}_{B^2_{\alpha(p),\infty}} 
\bigg(\sum_{t} \norm{\psi(A/t) f}_{L^p(M)}^2  \bigg)^{1/2} \\
&\les C_{p,q} \bnorm{\big(\sum_{t} |\psi(A/t) f|^2\big)^{1/2}}_{L^p(M)} \les C_{p,q}\norm{f}_{L^p(M)},
\end{align*}
where the last inequality follows from the Littlewood-Paley theory (see \cite[Lemma 2.3]{SS}).

Next, we handle the term $m_{I}$. As before, we may assume that $f$ is supported in a compact subset $\Omega_0$ of a coordinate patch $\Omega\subset M$ and fix a compact subset $X$ of $\Omega$, whose interior contains $\Omega_0$.

We claim that
\begin{equation}\label{eqn:lo}
 \bnorm{\sum_{t} \eta(A/t) \sum_{1\leq 2^j < \epsilon t } m^t_j (A/t) \left[ \big(\psi(A/t)f \big) \chi_{X^c} \right] }_{L^p(M)} \les C_{p,q}\norm{f}_{L^p(\Omega)}.
\end{equation}
To see this, note that we have by Lemma \ref{lem:basiclp}
\[ \norm{ m^t_j(A/t) f }_{L^p(M)} \les t^{\alpha(p)} \norm{m^t}_{L^2} \norm{f}_{L^p(M)}. \]
Recall from Section \ref{sec:local} that $\psi(A/t) f(x) = R_{t} f(x)$ for $x \in  X^c$, where the $L^p$ operator norm of $R_{t}$ is $O(t^{-N})$. This gives \eqref{eqn:lo} by the triangle inequality. 

This reduces $\norm{ m_{I}(A)f}_{L^{p,q}(M)} \les C_{p,q}\norm{f}_{L^p(\Omega)}$ to
\begin{equation}\label{eqn:lo2}
 \bnorm{\sum_{t} \eta(A/t) \sum_{1\leq 2^j < \epsilon t } m^t_j (A/t) \big[\big(\psi(A/t)f \big) \chi_{X} \big] }_{L^{p,q}(M)} \les C_{p,q}\norm{f}_{L^p(\Omega)}.
\end{equation}
We proceed to prove \eqref{eqn:lo2} in the following subsections. In what follows, we shall implicitly assume that every $t$-summation is taken over dyadic numbers $t>\epsilon^{-1}$.

\subsection{Atomic decomposition}
We fix a local coordinate for $\Omega$ such that, via the coordinate chart, $\Omega$ contains $\{x \in \R^d: |x|\leq 200d\epsilon \}$ and $X$ is a compact subset of $\{ x\in \R^d: |x|\leq 2^{-20} \epsilon \}$. 

We shall decompose $[\psi(A/t)f] \chi_{X}$ by using the Peetre's square function as in \cite{Seeger,LRS}, which is defined for $x\in X$ by
\[ \ps f(x) = \left( \sum_{t} \sup_{|y|\leq 100d/t} |\psi(A/t) f(x+y)|^2 \right)^{1/2}. \]
We have
\[ \norm{ \ps f}_{L^p(\Omega)} \les \norm{f}_{L^p(\Omega)}  \; \;\text{  for  } 1<p<\infty. \]
See \cite[Lemma 5.1]{Seeger}. In addition, we quote an orthogonality estimate in Lorentz spaces which can be obtained from \cite[Lemma 2.3]{SS} and \cite[Lemma 3.2]{LRS}.
\begin{lem} \label{lem:ortho} Let $1<p<2$ and $p\leq q\leq \infty$. Then 
\[ \norm{\sum_{t} \eta(A/t) f_t}_{L^{p,q}(M)} \les \left(\sum_{t} \norm{f_t}_{L^{p,q}(M)}^p \right)^{1/p}.
\]
\end{lem}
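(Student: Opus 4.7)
The plan is to argue by duality in Lorentz spaces. Since $1 < p < 2$ and $p \leq q \leq \infty$, the conjugate exponents satisfy $2 < p' < \infty$ and $1 \leq q' \leq p'$. Using the Lorentz duality $(L^{p,q})^* = L^{p',q'}$, it suffices to test against $g$ with $\|g\|_{L^{p',q'}} \leq 1$. One writes
\[
\Big\langle \sum_{t} \eta(A/t) f_t,\, g \Big\rangle = \sum_{t} \langle f_t,\, \eta(A/t) g \rangle,
\]
and applies H\"older's inequality in Lorentz spaces in each term, followed by H\"older in $\ell^p \times \ell^{p'}$, to obtain the bound
\[
\Big(\sum_t \|f_t\|_{L^{p,q}}^{p}\Big)^{1/p} \Big(\sum_t \|\eta(A/t) g\|_{L^{p',q'}}^{p'}\Big)^{1/p'}.
\]
This reduces matters to the dual square-function estimate
\[
\Big(\sum_t \|\eta(A/t) g\|_{L^{p',q'}}^{p'}\Big)^{1/p'} \les \|g\|_{L^{p',q'}}.
\]

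To prove the dual estimate, I would first invoke the Littlewood-Paley inequality of \cite[Lemma 2.3]{SS}, which gives $\|(\sum_t |\eta(A/t) g|^2)^{1/2}\|_{L^{r}} \les \|g\|_{L^{r}}$ for $1 < r < \infty$, and then upgrade it to Lorentz norms by real interpolation between two such $L^{r}$ endpoints straddling $p'$. Since $p' \geq 2$, the pointwise embedding $\ell^2 \hookrightarrow \ell^{p'}$ converts the $\ell^2$ square function into an $\ell^{p'}$ one, yielding
\[
\Big\|\Big(\sum_t |\eta(A/t) g|^{p'}\Big)^{1/p'}\Big\|_{L^{p',q'}} \les \|g\|_{L^{p',q'}}.
\]

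Finally, because $q' \leq p'$ (equivalently $p \leq q$), the lattice $L^{p',q'}$ is $p'$-concave; this is the content of \cite[Lemma 3.2]{LRS}, which provides the reverse-Minkowski-type inequality
\[
\Big(\sum_t \|\eta(A/t) g\|_{L^{p',q'}}^{p'}\Big)^{1/p'} \les \Big\|\Big(\sum_t |\eta(A/t) g|^{p'}\Big)^{1/p'}\Big\|_{L^{p',q'}},
\]
and chaining this with the previous bound closes the argument. The main delicate point is this final reverse-Minkowski step: it depends critically on the hypothesis $p \leq q$ and would fail in the opposite regime, so the entire proof is naturally tied to that constraint. The Littlewood-Paley extension to Lorentz spaces is routine and poses no real difficulty.
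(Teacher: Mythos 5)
Your argument is correct and is essentially the route the paper intends: the paper proves this lemma by citing the Littlewood--Paley square function bound of \cite[Lemma 2.3]{SS} together with \cite[Lemma 3.2]{LRS}, and your duality--H\"older--reverse-Minkowski chain is precisely a reconstruction of the proof of that LRS lemma, with the correct identification of $q'\leq p'$ as the crucial hypothesis. The only point to phrase carefully is the duality step when $q=\infty$: one should invoke $(L^{p',1})^*=L^{p,\infty}$ (rather than a duality statement for $L^{p,\infty}$ itself) to justify testing against $g\in L^{p',q'}$, which is available here since $q>1$ forces $q'<\infty$.
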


From now on, we closely follow the atomic decomposition from \cite{LRS}. However, in order to avoid repetition, we give a minimal exposition and refer the reader to \cite{LRS} for details. Define 
\begin{align*}
\Lambda_n &= \{ x\in X: \ps f(x) > 2^n \}.
\end{align*}
For each $t\in 2^\Z$, let $\qnk$ be the set of all dyadic cubes $Q$ of sidelength $1/t$ such that 
\[ |Q \cap \Lambda_n | \geq |Q|/2  \;\; \text{ but } \; \; |Q\cap \Lambda_{n+1}| < |Q|/2. \]
Then one may verify that
\[ [\psi(A/t) f] \chi_{X} = \sum_n \sum_{Q\in \qnk} [\psi(A/t) f] \chi_{Q\cap X}. \]

Let
\[ \Lambda_n^* = \{ x\in \R^d : M(\chi_{\Lambda_n})(x) > 100^{-d} \}, \]
where $M$ is the Hardy-Littlewood maximal function. Note that $\Lambda_n^*$ is containted in $\{ x: |x|\leq 2^{-12} \epsilon \}$. There is a Whitney decomposition of $\Lambda_n^*$ as a collection of dyadic cubes $\W_n$. The interior of the cubes in $\W_n$ are disjoint and, for each $Q\in \qnk$, there is a unique $W\in \W_n$ containing $Q$. For each $t\in 2^\Z$, $W \in \W := \cup_n \W_n$, and $n\in \Z$, set \[ a_{t,W,n} = \sumab{Q\in \qnk}{Q\subset W} [\psi(A/t)f] \chi_{Q\cap X}, \]  
and $a_{t,W} = \sum_{n: W\in \W_n} a_{t,W,n}$. For each dyadic number $t$, let $\W(t)$ be the collection of cubes in $\W$ whose sidelength is $t$. Then we have the following decomposition;
\[ [\psi(A/t) f] \chi_{X} = \sum_n \sum_{W\in \W_n} a_{t,W,n} = \sum_{W\in\W} a_{t,W} = \sum_{k\geq 0} \sum_{W \in \W(2^k/t)} a_{t,W}. \] 
Since $\W(2^k/t)$ is empty when $2^{k}/t \geq 2^{-10}\epsilon $, the $k$-sum is taken over $2^{k+10}<\epsilon t$.

To prove \eqref{eqn:lo2}, it is enough to show that
\begin{equation} \label{eqn:lo3}
\bnorm{\sum_{t} \eta(A/t) \sum_{k} \sum_{W \in \W(2^k/t)}  \sum_{1\leq 2^j < \epsilon t } m^t_j (A/t) a_{t,W}}_{L^{p,q}(M)} \les C_{p,q}\norm{\ps f}_{L^p(\Omega)}. 
\end{equation}

Let $m^{t,k} = \sum_{0\leq  j \leq k+10} m^t_j$. Then \eqref{eqn:lo3} is implied by the following claims; there is $\delta>0$ such that for $k\geq 0$,
\begin{align}  \label{eqn:lo5}
\bnorm{\sum_{t} \eta(A/t) \sum_{W \in \W(2^k/t)} \sumab{2^j<\epsilon t}{j>k+10} m^{t}_j (A/t) a_{t,W}}_{L^{p,q}(M)} &\les 2^{-k\delta} C_{p,q}\norm{\ps f}_{L^p(\Omega)}, \\
\label{eqn:lo4}
\bnorm{\sum_{t} \eta(A/t) \sum_{k} \sum_{W \in \W(2^k/t)}  m^{t,k} (A/t) a_{t,W}}_{L^{p}(M)} &\les C_{p,q}\norm{\ps f}_{L^p(\Omega)}.
\end{align}

We continue the proof of \eqref{eqn:lo5} and \eqref{eqn:lo4} in the following subsections. We record here an identity to be used later.
\begin{lem}[{\cite[Equation (57)]{LRS}}] \label{lem:atop}
\begin{equation*} 
\begin{split}
&\sum_{t} \sum_{W \in \W(2^k/t)} \norm{a_{t,W}}_{L^p(M)}^p \\
&\les \sum_{t} \sum_{W \in \W(2^k/t)}  (2^k/t)^{d(\frac{1}{p}-\frac{1}{2})p} \norm{a_{t,W}}_{L^2(M)}^p \les \norm{\ps f}_{L^p(\Omega)}^p.
\end{split}
\end{equation*}
\end{lem}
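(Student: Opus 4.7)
The plan is to handle the two inequalities separately. The first one is pure Hölder: each $a_{t,W}$ is supported in $W\cap X$ with $|W|=(2^k/t)^d$, so for $p\leq 2$,
\[
\norm{a_{t,W}}_{L^p(M)}\leq |W|^{1/p-1/2}\norm{a_{t,W}}_{L^2(M)}=(2^k/t)^{d(1/p-1/2)}\norm{a_{t,W}}_{L^2(M)}.
\]
Taking the $p$-th power and summing over $(t,W)$ gives the first inequality.

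For the second inequality, I would first obtain an $L^\infty$ bound on each piece $a_{t,W,n}$. For $Q\in \qnk$ with $Q\subset W$, the defining condition $|Q\cap \Lambda_{n+1}|<|Q|/2$ provides a point $x_Q\in Q$ with $\ps f(x_Q)\leq 2^{n+1}$; and since every $y\in Q$ lies within $\sqrt{d}/t\leq 100d/t$ of $x_Q$, the definition of $\ps$ forces $|\psi(A/t)f(y)|\leq \ps f(x_Q)\leq 2^{n+1}$. Consequently $\norm{a_{t,W,n}}_{L^\infty}\leq 2^{n+1}$ and $\norm{a_{t,W,n}}_{L^2(M)}^2\leq 2^{2(n+1)}|W|$. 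Because $|Q\cap \Lambda_n|$ is non-increasing in $n$, a given cube $Q$ of sidelength $1/t$ belongs to at most one $\qnk$, so the $a_{t,W,n}$ have disjoint supports in $n$ and $\norm{a_{t,W}}_{L^2(M)}^2=\sum_{n:W\in \W_n}\norm{a_{t,W,n}}_{L^2(M)}^2$. The subadditivity $(\sum c_n)^{p/2}\leq \sum c_n^{p/2}$, valid since $p\leq 2$, then yields
\[
\norm{a_{t,W}}_{L^2(M)}^p\leq \sum_{n:W\in \W_n} 2^{(n+1)p}|W|^{p/2}.
\]

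Multiplying by $|W|^{1-p/2}=(2^k/t)^{d(1/p-1/2)p}$ and swapping sums reduces the task to bounding $\sum_n 2^{(n+1)p}\sum_t\sum_{W\in \W(2^k/t)\cap \W_n}|W|$. The key combinatorial observation is that for fixed $n$ and $k$, each Whitney cube $W\in \W_n$ of sidelength $s$ appears for a unique $t$, namely $t=2^k/s$; hence the $(t,W)$ double sum collapses to $\sum_{W\in\W_n}|W|\leq |\Lambda_n^*|$, the last bound following from the essential disjointness of the Whitney decomposition of $\Lambda_n^*$. Finally, since $p>1$, the $L^p$ boundedness of the Hardy-Littlewood maximal function gives $|\Lambda_n^*|\les |\Lambda_n|$, and the layer-cake representation produces $\sum_n 2^{np}|\Lambda_n|\les \norm{\ps f}_{L^p(\Omega)}^p$, which completes the chain. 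The main obstacle is precisely the $(t,W,n)$ bookkeeping: a single dyadic cube $W$ may appear in $\W_n$ for many different $n$'s, so one must verify carefully that the sidelength constraint $|W|=(2^k/t)^d$ pins down a unique $t$ per cube and prevents overcounting across scales.
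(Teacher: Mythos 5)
Your proof is correct, and it is essentially the argument behind the cited source: the paper itself gives no proof of this lemma (it is quoted directly from \cite[Equation (57)]{LRS}), and your chain --- H\"older on the support $W$, the $L^\infty$ bound $2^{n+1}$ on each $a_{t,W,n}$ coming from the stopping condition $|Q\cap\Lambda_{n+1}|<|Q|/2$ together with the Peetre sup in the definition of $\ps$, disjointness in $n$ and in $Q$, the collapse of the $(t,W)$ sum to $\sum_{W\in\W_n}|W|\leq|\Lambda_n^*|\les|\Lambda_n|$, and the layer-cake bound $\sum_n 2^{np}|\Lambda_n|\les\norm{\ps f}_{L^p}^p$ --- is exactly how that equation is established in \cite{LRS}. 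The only point worth making explicit is that the reference point $x_Q$ should be taken in $Q\cap(\Lambda_n\setminus\Lambda_{n+1})$ (which is nonempty by \emph{both} defining conditions of $\qnk$), so that $x_Q\in X$ where $\ps f$ is defined.
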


\subsection{Proof of \eqref{eqn:lo4}} 
We first claim that it is sufficient to prove 
\begin{equation}\label{eqn:lo41}
\bnorm{\sum_{t} \eta(A/t) \sum_{k} \sum_{W \in \W(2^k/t)}  [m^{t,k} (A/t) a_{t,W}] \chi_{\Omega}}_{L^{p}(M)} \les C_{p,q}\norm{\ps f}_{L^p(\Omega)}.
\end{equation} 

Note that $m^{t,k} = m^t *\ift [\phi( \cdot /2^{k+10})]$ and for $x\notin \Omega$ and $y\in X$, we have $|m^{t,k}(A/t)(x,y)| = O(2^{-kN})\norm{m^t}_1$ by Lemma \ref{lem:kernel}. This gives
\begin{align*}
\bnorm{m^{t,k} (A/t) \Big[\sum_{W \in \W(2^k/t)} a_{t,W} \Big] }_{L^{p}(\Omega^c)} &\les 2^{-kN} \norm{m^t}_{1}\bnorm{\sum_{W \in \W(2^k/t)}   a_{t,W}}_{L^p(M)} \\
&\les 2^{-kN} C_{p,q}\Big( \sum_{W \in \W(2^k/t)} \norm{a_{t,W}}_{L^p(M)}^p\Big)^{1/p}, 
\end{align*}
which implies with Lemma \ref{lem:atop} and Lemma \ref{lem:ortho},
\begin{equation}\label{eqn:lerror}
\begin{split}
&\bnorm{\sum_{t} \eta(A/t) \sum_{k} \sum_{W \in \W(2^k/t)}  [m^{t,k} (A/t) a_{t,W}] \chi_{\Omega^c}}_{L^{p}(M)} \\
&\les \sum_{k} \left(\sum_{t} \bnorm{m^{t,k} (A/t) \Big[\sum_{W \in \W(2^k/t)} a_{t,W} \Big] }_{L^{p}(\Omega^c)}^p \right)^{1/p} \\
&\les C_{p,q} \sum_{k} 2^{-kN} \left( \sum_{t} \sum_{W \in \W(2^k/t)} \norm{a_{t,W}}_{L^p(M)}^p \right)^{1/p} \les C_{p,q} \norm{\ps f}_{L^p(\Omega)},
\end{split}
\end{equation}
establishing the claim.

Next, we define operators $T^{t,k}_{sh}$ and $E^{t,k}_{lg}$, by
\begin{align*}
T^{t,k}_{sh}(x,y) &= \chi_{\{ (x,y) \in \Omega\times \Omega: |x-y|\leq C 2^{k+10}/t \} }(x,y) m^{t,k}(A/t)(x,y) \\
E^{t,k}_{lg}(x,y) &= \chi_{\{ (x,y) \in \Omega\times \Omega: |x-y|> C 2^{k+10}/t \} }(x,y) m^{t,k}(A/t)(x,y),
\end{align*}
for a sufficiently large $C>0$ as in Lemma \ref{lem:kernel}.
In order to prove \eqref{eqn:lo41}, it is sufficient to prove the following estimates;
\begin{align} \label{eqn:short}
\bnorm{\sum_{t} \eta(A/t) \sum_{k} \sum_{W \in \W(2^k/t)}  T^{t,k}_{sh} a_{t,W}}_{L^{p}(M)} &\les C_{p,q}\norm{\ps f}_{L^p(\Omega)},\\ \label{eqn:longerror}
\bnorm{\sum_{t} \eta(A/t) \sum_{k} \sum_{W \in \W(2^k/t)}  E^{t,k}_{lg} a_{t,W}}_{L^{p}(M)} &\les C_{p,q}\norm{\ps f}_{L^p(\Omega)}.
\end{align}
By Lemma \ref{lem:kernel}, we see that the $L^p$ operator norm of $E^{t,k}_{lg}$ is $O(2^{-kN} \norm{m^t}_1)$. We omit the proof of \eqref{eqn:longerror}, since it can be shown as in \eqref{eqn:lerror}.

We turn to the proof of \eqref{eqn:short}. We claim that
\begin{equation} \label{eqn:tshl}
\norm{T^{t,k}_{sh} f}_{L^2(M)} \les C_{p,q} \norm{f}_{L^2(M)}.
\end{equation}
This, together with the fact that $T^{t,k}_{sh} a_{t,W}$ is supported in a fixed dilate $W^*$ of $W$, establishes \eqref{eqn:short}. We refer the reader to \cite[Appendix A]{LRS} for details. For the proof of \eqref{eqn:tshl}, it is enough to notice that
\begin{equation*}
\norm{m^{t,k}(A/t) f}_{L^2(M)} \les \norm{m^{t,k}}_{\infty} \norm{f}_{L^2(M)} \les C_{p,q}  \norm{f}_{L^2(M)},
\end{equation*}
since $T^{t,k}_{sh}(x,y) = m^{t,k}(A/t)(x,y)\chi_{\Omega}(x) - E^{t,k}_{lg}(x,y)$ and the $L^p$ operator norm of $E^{t,k}_{lg}$ is $O(2^{-kN}\norm{m^t}_1)$.

\subsection{Proof of \eqref{eqn:lo5}}
We denote $\sumab{2^j<\epsilon t}{j>k+10}$ by $\sum_j$ for the sake of simplicity. We apply Lemma \ref{lem:ortho} and decompose $m^{t}_j$ as in \eqref{eqn:decomp}. Then it is sufficient to show, for the proof of \eqref{eqn:lo5}, that there is $\delta>0$ such that
\begin{equation}\label{eqn:lo6}
\left( \sum_{t} \bnorm{ \sum_j m^{t}_j\eta*\check{\eta_j}(A/t) \Big[ \sum_{W \in \W(2^k/t)} a_{t,W} \Big] }_{L^{p,q}(M)}^p \right)^{1/p} \les 2^{-k\delta}C_{p,q} \norm{\ps f}_{L^p(\Omega)}.
\end{equation}
For the proof of \eqref{eqn:lo6}, we need the following proposition.
\begin{prop}\label{prop:long} Let $1<p< \frac{2(d+1)}{d+3}$, $p\leq q \leq \infty$, and $b_j$ as in Proposition \ref{prop:main}. Fix $k\geq 0$ and assume that $a_z$ is a function supported in $X$, in particular, on the cube $W_z := \prod_{i=1}^d [2^{k} z_i/t,2^k(z_i+1)/t] \cap X$ for each $z\in \Z^d$ and that $\norm{a_z}_{L^2(X)} \leq 1$. Then there is $\delta>0$ such that
\begin{equation} \label{eqn:long}
\begin{split}
\bnorm{ \sumab{2^j<\epsilon t}{j>k+10} 2^{-jd(\frac{1}{p}-\frac{1}{2})} b_j(A/t) \Big[  \sum_z \gamma_{j,z} a_z \Big]}_{L^{p,q}(M)} \\
\les 2^{-k\delta} (2^k/t)^{d(\frac{1}{p}-\frac{1}{2})} \Big( \sum_z \big( \sum_j |\gamma_{j,z}|^q\big)^{p/q}\Big)^{1/p }.
\end{split}
\end{equation}
\end{prop}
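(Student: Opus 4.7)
The plan is to apply the Lorentz-space estimate \eqref{eqn:lorentzglobal} from Proposition \ref{thm:ma} to obtain the desired bound except for the $2^{-k\delta}$ factor, and then to produce the missing gain by interpolating with an auxiliary $L^2$-based estimate that exploits the stronger hypothesis $\norm{a_z}_2 \leq 1$ together with the scale separation $j > k+10$.

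First I would set $f_j = \sum_z \gamma_{j,z} a_z$ in \eqref{eqn:lorentzglobal}. Since the atoms have essentially disjoint supports in the cubes $W_z$, one has pointwise
\[ \Big(\sum_j \babs{\sum_z \gamma_{j,z} a_z}^q\Big)^{1/q} = \alpha_{z(x)}\, |a_{z(x)}(x)|, \]
where $\alpha_z = (\sum_j |\gamma_{j,z}|^q)^{1/q}$ and $z(x)$ is the unique index with $x \in W_{z(x)}$. Combining this identity with H\"older's inequality and the bound $\norm{a_z}_p \leq (2^k/t)^{d(1/p-1/2)} \norm{a_z}_2 \leq (2^k/t)^{d(1/p-1/2)}$ produces the baseline
\[
\bnorm{\sum_j 2^{-jd(1/p-1/2)} b_j(A/t)\sum_z \gamma_{j,z} a_z}_{L^{p,q}(M)} \les (2^k/t)^{d(1/p-1/2)} \Big(\sum_z \alpha_z^p\Big)^{1/p},
\]
which is exactly the desired inequality except for the factor $2^{-k\delta}$.

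To extract the missing gain I would then prove an auxiliary $L^2$-type bound. Applying Lemma \ref{lem:basiclp} at $p = 2$ (so $\alpha(2) = 0$ and $\max(t^{1/2}, 2^{j/2}) = t^{1/2}$ for $t \geq 2^j/\epsilon$) gives $\norm{b_j(A/t)}_{L^2 \to L^2} \les 1$, and the $L^2$ orthogonality of the atoms (disjoint supports in $W_z$) yields $\bnorm{\sum_z \gamma_{j,z} b_j(A/t) a_z}_{L^2(M)} \les (\sum_z |\gamma_{j,z}|^2)^{1/2}$. Summing in $j$ by H\"older on the geometric series $\sum_{j > k+10} 2^{-jd(1/p-1/2)p'} \les 2^{-kd(1/p-1/2)p'}$ then gives an $L^2$ bound for the full sum with strictly better $k$-dependence than the baseline.

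Finally, real interpolation in Lorentz spaces between the $L^{p,q}$ baseline and the $L^2$ auxiliary bound -- in the spirit of \cite[Lemma 2.4]{LRS} -- produces the gain $2^{-k\delta}$ for some $\delta > 0$. The main obstacle will be arranging the interpolation so that the three structures -- the $L^{p,q}$ target on the left, the mixed-norm $(\sum_z \alpha_z^p)^{1/p}$ on the right, and the correct $t$-scaling -- all survive simultaneously; this requires a vector-valued Lorentz interpolation tuned to the $\ell^p_z(\ell^q_j)$ norm. The positivity of the resulting $\delta$ should ultimately reflect the strict inequality $p < \frac{2(d+1)}{d+3}$, which provides the necessary buffer for the interpolation to yield a nontrivial exponent.
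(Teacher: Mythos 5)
Your baseline step is sound and matches the first half of the paper's argument: applying Proposition \ref{thm:ma} to $f_j=\sum_z\gamma_{j,z}a_z$, using the disjoint supports of the atoms and H\"older's inequality $\norm{a_z}_p\leq (2^k/t)^{d(1/p-1/2)}\norm{a_z}_2$, does give \eqref{eqn:long} without the factor $2^{-k\delta}$. (The paper actually runs this at an auxiliary exponent $p_1$ with $p<p_1<\tfrac{2(d+1)}{d+3}$, for a reason that becomes the crux below.) The gap is in your mechanism for producing the gain $2^{-k\delta}$, and it is fatal for two reasons. First, the interpolation geometry is wrong: you propose to interpolate between a no-gain estimate \emph{at the exponent $p$ itself} and a gain estimate at $L^2$ with $2>p$. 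Real interpolation between exponents $p$ and $2$ only produces exponents in $[p,2]$, and at the endpoint $p$ the interpolation parameter assigns all the weight to the no-gain estimate; no amount of vector-valued Lorentz bookkeeping changes this. To land at $p$ with a positive fraction of a gain you need the two input estimates to straddle $p$. This is exactly why the paper pairs an $L^{p_1}$ estimate ($p_1>p$, no gain, from Proposition \ref{thm:ma} plus H\"older) with an $L^1$ estimate carrying the gain, and then interpolates via \cite[Lemma 2.4]{LRS}.

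Second, the substantive content of the paper's proof is the improved $L^1$ bound $\norm{b_j(A/t)a_z}_{L^1(\Omega)}\les 2^{k/2}(2^j/t)^{d/2}$, a gain of $2^{-k(d-1)/2}$ over the trivial bound $2^{kd/2}(2^j/t)^{d/2}$. This is proved by showing that the main kernel $I_j$ applied to $a_z$ is essentially supported in the exceptional set $E(c_z,r)$ of measure $O(2^{j(d-1)}2^k/t^d)$ (second dyadic decomposition, the plates $P_{j,D}$, Lemma \ref{lem:qd}), then applying Cauchy--Schwarz over that set together with the $L^2(\Omega)$ boundedness of the underlying Fourier integral operator; the curvature hypothesis enters here. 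Nothing in your proposal plays this role. Your proposed substitute --- summing the geometric series $\sum_{j>k+10}2^{-jd(1/p-1/2)}$ --- only reproduces the normalization already present on the right-hand side and does not yield an additional gain once the norms are matched. Moreover your auxiliary $L^2$ ingredients are themselves off: Lemma \ref{lem:basiclp} cannot be invoked at $p=2$ (Theorem \ref{thm:spec} requires $p\leq\tfrac{2(d+1)}{d+3}$), and the correct operator norm is $\norm{b_j(A/t)}_{L^2\to L^2}=\sup_\lambda|b_j(\lambda)|\les 2^{j/2}$, not $O(1)$, since $b_j$ is only $L^2$-normalized with $\widehat{b_j}$ supported on an interval of length $\sim 2^j$.
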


We defer the proof of Proposition \ref{prop:long} for the moment and proceed to the proof of \eqref{eqn:lo6}. Fix $t\in 2^\Z$. We can identify $\sum_{W \in \W(2^k/t)} a_{t,W}$ with $\sum_{z}a_{t,W_z}$, where the sum is taken over $z\in \Z^d$ such that $W_z \in \W(2^k/t)$ and $\norm{a_{t,W_z}}_2 \neq 0$. Then set $a_z = \norm{a_{t,W_z}}_2^{-1}a_{t,W_z} $ and $b_j = [m^t_j \eta] * \check{\eta_j}/\norm*{m^t_j}_2$ ($b_j = 0$ if $\norm*{m^t_j}_2=0$). Finally, set $\gamma_{j,z} = 2^{j\alpha(p)} \norm*{m^t_j}_2\norm{a_{t,W_z}}_2$. Then Proposition \ref{prop:long} implies that 
\begin{align*}
&\bnorm{ \sum_j m^{t}_j\eta*\check{\eta_j} \Big[ \sum_{W \in \W(2^k/t)} a_{t,W} \Big] }_{L^{p,q}(M)}^p \\
&\les 2^{-k\delta p} \norm{m^t}_{B^2_{\alpha(p),q}}^p (2^k/t)^{d(\frac{1}{p}-\frac{1}{2})p} \sum_{W \in \W(2^k/t)}\norm{a_{t,W}}_2^p.
\end{align*}
We sum this over $t$ using Lemma \ref{lem:atop}, which gives \eqref{eqn:lo6}.

\begin{proof}[Proof of Proposition \ref{prop:long}]
Define the operator $S_j$ acting on functions on $\Z^d$ by
\[ S_j \gamma_j = 2^{jd/2} b_j(A/t) \big[ \sum_z \gamma_{j,z} a_z \big]. \]
Fix $p_1$ such that $p<p_1< \frac{2(d+1)}{d+3}$. Then we have
\begin{equation}\label{eqn:lp1}
\bnorm{\sum_j S_j \gamma_j}_{L^{p_1}(M)} \les
(2^k/t)^{d(\frac{1}{p_1}-\frac{1}{2})} \left( \sum_j 2^{jd} \norm{\gamma_j}_{l^{p_1}}^{p_1} \right)^{p_1},
\end{equation}
which is a consequence of Proposition \ref{thm:ma}, followed by H\"{o}lder's inequality.

The $2^{-k\delta}$ gain is obtained from an $L^1$ estimate. Indeed, we claim that 
\begin{equation}\label{eqn:l1}
\bnorm{\sum_j S_j \gamma_j}_{L^{1}(M)} \les
2^{k/2}/t^{d/2} \sum_j 2^{jd} \norm{\gamma_j}_{l^{1}}.
\end{equation}
Here, $2^{k/2}$ is a gain over the bound $2^{kd/2}$. Interpolating \eqref{eqn:lp1} and \eqref{eqn:l1} by using \cite[Lemma 2.4]{LRS} gives \eqref{eqn:long}. We refer the reader to \cite{LRS} for details.

By the triangle inequality and Lemma \ref{lem:kernel}, we see that \eqref{eqn:l1} follows from
\begin{equation}\label{eqn:l2}
\norm{b_j(A/t) a_z}_{L^1(\Omega)} \les 2^{k/2} (2^{j}/t)^{d/2}.
\end{equation}
By Lemma \ref{lem:reduction}, we have
\[ \norm{b_j(A/t) a_z}_{L^1(\Omega)} \les \norm{I_j a_z}_{L^1(\Omega)} + 2^{-jN} \norm{a_z}_{L^1(\Omega)}. \]
Since $\norm{a_z}_{L^1(\Omega)} \les (2^k/t)^{d/2}$ and $j>k$, this reduces \eqref{eqn:l2} to 
\begin{equation*}
\norm{I_j a_z}_{L^1(\Omega)} \les 2^{k/2} (2^{j}/t)^{d/2}.
\end{equation*}
This can be further reduced to the statement that we have, for $|r| \sim 2^j/t$ and $j> k+10$,
\begin{equation}\label{eqn:l3}
\norm{t^d \int Q_j(r,t,\cdot,y) a_z(y)dy }_{L^1(\Omega)} \les (2^{j(d-1)}2^{k}/t^{d})^{1/2}.
\end{equation}

Before we proceed to the proof of \eqref{eqn:l3}, we briefly discuss the idea of the proof (cf. \cite{HNS,LRS}). Let $c_z$ be the center of the cube $W_z$. It turns out that the integrand in \eqref{eqn:l3} is essentially supported in the set $E(c_z,r)$, defined in Section \ref{sec:decom}, whose measure is $O(2^{j(d-1)}2^k/t^d)$. We will apply the Cauchy-Schwarz inequality over this set and work with an $L^2$ estimate, which leads us to \eqref{eqn:l3}. 

We first handle the $L^1$ estimate off the set $E(c_z,r)$. As in Section \ref{sec:decom}, 
\begin{align*}
&\norm{t^d \int Q_j(r,t,\cdot,y) a_z(y)dy }_{L^1(\Omega\setminus E(c_z,r))} \\ 
&\leq \sum_D \norm{t^d \int Q_{j,D}(r,t,\cdot,y) a_z(y)dy }_{L^1(\Omega\setminus E(c_z,r))} \\
&\leq \sum_\theta \sum_{D\in \D_\theta} \sum_{l\geq 0} \norm{t^d \int Q_{j,D}(r,t,\cdot,y) a_z(y)dy }_{L^1(A^{\theta}_l(c_z,r))}.
\end{align*}

By Lemma \ref{lem:qd}, we may bound the last line by a constant times
\begin{align*}
&2^{-j(d-1)/2} t^d \sum_\theta \sum_{D\in \D_\theta} \sum_{l\geq 0} 2^{-(k+l)N} |A^{\theta}_l(c_z,r)| \norm{a_z}_{L^1(\Omega)} \\
&\les 2^{-j(d-1)/2} t^d  \sum_\theta \sum_{D\in \D_\theta} \sum_{l\geq 0} 2^{-(k+l)N}  2^{(j+k)(d-1)/2 + k+ld}/t^d  (2^{k}/t)^{d/2}\\
&\les 2^{-kN} (2^{j(d-1)}/t^d)^{1/2}.
\end{align*}

For the main term, we claim that 
\begin{equation} \label{eqn:l2fio}
\norm{t^d \int Q_j(r,t,\cdot,y) f(y)dy }_{L^2(\Omega)} \les \norm{f}_{L^2(\Omega)}.
\end{equation}
Given this, we have
\begin{align*}
&\norm{t^d \int Q_j(r,t,\cdot,y) a_z(y)dy }_{L^1(E(c_z,r))} \\ 
&\les (2^{j(d-1)}2^k/t^d)^{1/2} \norm{t^d \int Q_j(r,t,\cdot,y) a_z(y)dy }_{L^2(\Omega)}
\les (2^{j(d-1)}2^k/t^d)^{1/2}
\end{align*}
by the $L^2$ normalization of $a_z$. 

It only remains to verify \eqref{eqn:l2fio}, i.e. the uniform $L^2(\Omega)$ boundedness of the operator associated with the kernel
\[  \zeta_j(x,y) \int e^{i\Phi(r,x,y,\xi)} q(r,x,y,\xi)\tilde{\eta}(\xi/t) d\xi, \]
for $|r|\sim 2^j/t$. We remark that $q(r,x,y,\xi)\tilde{\eta}(\xi/t)$ is a symbol of order $0$ in $\xi$-variable with bounds uniform in $t$. Moreover, we have
\[ \det \begin{pmatrix} \Phi_{xy} &  \Phi_{x \xi} \\ \Phi_{\xi y} &  \Phi_{\xi \xi}  \end{pmatrix} \neq 0\]
as the phase function $\Phi(r,x,y,\xi)=\varphi(x,y,\xi) + ra(y,\xi)$ is a small perturbation of $\inn{x-y,\xi}$ since $r= O(\epsilon)$ and $|x-y|\les 2^j/t \leq \epsilon$. Therefore, \eqref{eqn:l2fio} follows from the $L^2$ boundedness of Fourier integral operators (see \cite{HorFIO,GrSe}).
\end{proof}

\section{A concluding remark}
It would be interesting to obtain the following maximal version of Theorem \ref{thm:main}; under the same assumptions in Theorem \ref{thm:main},
\begin{equation*}
\norm{\sup_{t> 0} |m(A/t) f|}_{L^{p'}(M)} \les \norm{m}_{B^2_{\alpha(p),q}(\R) }\norm{ f}_{L^{p',q'}(M)}.
\end{equation*}
A related maximal estimate holds for a class of quasiradial Fourier multipliers (see \cite{LRS, Kim}). In view of \cite{LRS}, it would follow from a vector valued version of Theorem \ref{thm:main}, which is likely to be obtained, provided that the following vector valued analogue of Lemma \ref{lem:basiclp} is verified;
\begin{align*}
&\norm{\int_1^2 \beta*\check{\eta_j} (A/st) f_s ds}_{L^2(M)} \\
&\les t^{\delta(p)} \max(t^{1/2}, 2^{j/2}) \norm{\beta}_{L^2}\norm{ \int_1^2 |f_s|ds }_{L^p(M)}.
\end{align*}

\end{document}